\DeclareMathOperator*{\esssup}{ess\,sup}
\DeclareMathOperator*{\essinf}{ess\,inf}
\DeclareMathOperator*{\leb}{Leb}
\theoremstyle{definition}
\newtheorem{theorem}{Theorem}[section] 
\theoremstyle{definition}
\newtheorem{lemma}[theorem]{Lemma}
\theoremstyle{definition}
\theoremstyle{definition}
\newtheorem{corollary}[theorem]{Corollary}
\theoremstyle{definition}
\newtheorem{remark}[theorem]{Remark}
\theoremstyle{definition}
\newtheorem{definition}[theorem]{Definition}
\newtheorem{proposition}[theorem]{Proposition}
\newtheorem{example}[theorem]{Example}
\theoremstyle{definition}
\newcommand {\Sec}[1] {Section~\ref{#1}}
\newcommand {\fig}[1] {Figure~\ref{#1}}
\newcommand {\thrm}[1] {Theorem~\ref{#1}} 
\newcommand {\cor}[1] {Corollary~\ref{#1}} 
\newcommand {\prop}[1] {Proposition~\ref{#1}} 
\newcommand {\dfnn}[1] {Definition~\ref{#1}} 
\newcommand {\rem}[1] {Remark~\ref{#1}} 
\newcommand {\lem}[1] {Lemma~\ref{#1}}
\begin{document}

\title{\textbf{PREVALENCE OF STABILITY FOR SMOOTH BLASCHKE PRODUCT COCYCLES FIXING THE ORIGIN}}
\date{}

\author{Cecilia Gonz\'alez-Tokman\thanks{School of Mathematics and Physics, University of Queensland, St Lucia QLD 4072, Australia. \\
\texttt{cecilia.gt@uq.edu.au}}, Joshua Peters\thanks{School of Mathematics and Physics, University of Queensland, St Lucia QLD 4072, Australia. \\
\texttt{joshua.peters@uq.net.au}}}

\maketitle

\begin{abstract}\noindent
 This work investigates the stability properties of Lyapunov exponents of transfer operator cocycles from a measure-theoretic perspective. Our results focus on so-called Blaschke product cocycles, a class of random dynamical systems amenable to rigorous analysis. We show that prevalence of stability is related to the dimension of the base system's domain, $\Omega$. When $\Omega=S^1$, we show that stability is prevalent among smooth monic quadratic Blaschke product cocycles fixing the origin by constructing a so-called probe. For higher dimensional $\Omega$, we show that a probe does not exist, thus providing strong evidence that stability is not prevalent in this setting. Finally, through a perturbative method we show that almost every smooth Blaschke product cocycle fixing the origin is stable.    
\end{abstract}

\newpage
\begingroup
\hypersetup{linkcolor=[rgb]{0.0,0.0,0.0}}
\tableofcontents
\endgroup
\newpage
\noindent
\section{Introduction}
It is uncommon to be able to encapsulate all information in dynamical systems which exactly model reality. In particular, global properties of externally forced systems may alter under perturbations of system parameters -- begging the question of \textit{how and when do small errors in dynamical models drastically influence relevant outputs?} Answering this question is especially important in applications where one wishes to understand whether errors in input data can impact long term predictions.
A popular and powerful tool for the study of chaotic systems is the (Ruelle--)Perron--Frobenius operator, or transfer operator. We refer the reader to \cite{Baladi} for a monograph on the topic. This operator describes how distributions or ensembles of trajectories evolve under the dynamics of a system, providing one with information about invariant measures, rates of decay of correlations, and many statistical properties such as central limit theorems. It is natural to investigate the stability properties of this operator to determine whether or not drastic qualitative changes can occur under small perturbations. \\ \\ \noindent Significant progress has been made in the study of perturbations for Perron-Frobenius operators.
In the case of autonomous dynamical systems,
Keller and Liverani revealed in \cite{Keller} that transfer operators satisfying a Lasota-Yorke type inequality are stable under a broad class of perturbations. Specifically, they illustrated that both the isolated eigenvalues and corresponding eigenspaces continuously depend on the perturbation. Froyland, Gonz\'alez-Tokman and Quas \cite{Hilbert,LasotaYorke,gonzaleztokman2018stability}; Dragi$\check{\mathrm{c}}$evi\'c, Rugh and Sedro \cite{HHJ_Reglin,DS_spechyp}; and Crimmins \cite{crimmins}, have been developing extensions of this result in a non-autonomous setting, where instead of the Perron-Frobenius operator of a single map, a cocycle is formed, where the fibre maps are Perron-Frobenius operators associated to the maps being composed.
Instead of studying eigenvalues of operators, Lyapunov exponents of operator cocycles are investigated. In related settings,  Ma\~n\'e \cite{Mane}; Bochi and Viana \cite{instab}; and Bessa and Carvalho \cite{BC_gen}, investigate generic properties of Lyapunov exponents in the $C^0$ and $C^1$ topologies. In particular, for maps, \cite{instab} demonstrates that there exists a residual subset of volume-preserving $C^1$ diffeomorphisms such that, for almost every orbit, either all Lyapunov exponents are equal to zero, or the Oseledets splitting is dominated. Furthermore, for compact semiflows on Hilbert spaces, \cite{BC_gen} proves that there is $C^0$-residual subset of cocycles within which, for almost every orbit, either the Oseledets splitting is dominated, or all the Lyapunov exponents are equal to $-\infty$. \\ \\ \noindent In \cite{LasotaYorke}, Froyland, Gonz\'alez-Tokman and Quas established general conditions for which
(random) absolutely continuous invariant measures of interval maps depend continuously under
various perturbations of the operator cocycle. The first result answering the question of stability of the full Lyapunov spectrum and Oseledets spaces in infinite dimensions was presented by Froyland, Gonz\'alez-Tokman and Quas in \cite{Hilbert}, by considering Hilbert-Schmidt cocycles on a separable Hilbert space with exponentially decaying entries. Crimmins provides conditions in \cite{crimmins}, which guarantee stability of the Lyapunov spectrum and Oseledets splitting for the Perron-Frobenius operator cocycle acting on $C^k$ expanding maps of $S^1$. Subject to uniformly small fibre-wise $C^{k-1}$ perturbations to the random dynamics, and perturbations generated by numerical approximations, stability conditions are provided.\\ \\ \noindent In \cite{gonzaleztokman2018stability}, Gonz\'alez-Tokman and Quas gave results in the random infinite dimensional setting where they considered a  cocycle of Perron-Frobenius operators acting on a Banach space of analytic function on an annulus. They illustrated that small natural perturbations may cause a complete collapse of the Lyapunov spectrum. In the context of invertible matrix cocycles, collapse of Lyapunov exponents had been established in \cite{instab}, using alternative techniques. In \cite{gonzaleztokman2018stability}, the authors provided necessary and sufficient conditions for the Lyapunov spectrum of the Perron-Frobenius operator cocycle to exhibit stability (or instability) properties to perturbations. They further established that so-called expanding stable Blaschke product cocycles are topologically generic. Such systems were considered by \cite{gonzaleztokman2018stability} to both generalise autonomous results of expanding circle maps in \cite{MR3592677}, and to investigate the robustness of the Lyapunov--Oseledets spectrum of Perron-Frobenius operator cocycles under perturbations. We refer the reader to e.g. \cite{addit,environment,semiinvert} and references therein for further information in this direction. \\ \\ \noindent This paper aims to extend on results from \cite{gonzaleztokman2018stability}, by investigating whether
 stability of Lyapunov exponents for transfer operator cocycles (related to stability aspects of random dynamical systems from a quenched perspective) is a measure-theoretically generic property among
expanding Blaschke product cocycles. For this, we consider the prevalence framework: a translation invariant ``almost every'' condition on infinite dimensional spaces \cite{HuntPrevalence}.
See also \cite{MR1191479} for a discussion on related notions. As opposed to defining a measure on the space considered, prevalence is defined in terms of a class of compactly supported probability measures \cite{HuntPrevalence}. Applications of this theory to fields include functional analysis \cite{functional_prevalence}, smooth dynamical systems \cite{smooth_dynamical}, and geometry \cite{geometry_prevalence}. Due to the robust checkable conditions required to understand the stability of the Lyapunov spectrum for the operator cocycle considered in \cite{gonzaleztokman2018stability}, it serves as a prime candidate to investigate measure-theoretic genericity. \\ \\ \noindent In Section \ref{sec:prelim}, we introduce relevant definitions and results which will be used predominately throughout this paper. Here we formally describe the prevalence framework and present preliminary definitions and results related to random dynamical systems and transfer operators. Section \ref{sec:fbc} focuses on introducing finite Blaschke products, where we record important results and relevant spaces used throughout the paper. In Section \ref{sec:fbc0} we develop and specialise results for Blaschke product cocycles fixing the origin. {As is mentioned in Section \ref{sec:fbc0}, this is motivated by the fact that every Blaschke product cocycle may be \textit{conjugated} to another cocycle that fixes the origin.} Results from Section \ref{sec:fbc0} are utilised in Section \ref{sec:monicquad} to demonstrate that, {over the circle}, stable monic quadratic Blaschke product cocycles fixing the origin are prevalent. {We then provide strong evidence in Section \ref{sec:nonprev} that by increasing the dimension of the driving system, prevalence is not preserved.} To conclude, in Section~\ref{sec:aefbcstab} we introduce a one-parameter family of perturbations and prove that for almost every perturbation within this family, the resulting Blaschke product cocycle is stable.

\section{Preliminaries}
\label{sec:prelim}
In this section, we collate definitions and results relevant to this paper. This involves introducing the prevalence framework, and discussing random dynamical systems.
\subsection{Prevalence framework}
The theory of prevalence provides a translation invariant `almost everywhere' condition on infinite dimensional vector spaces. The following definitions were sourced from the foundational paper \cite{HuntPrevalence} and the survey \cite{Prevalence}.\\ \\ \noindent The `almost every' condition on infinite dimensional spaces was introduced by \cite{HuntPrevalence} through the notion of a \textit{prevalent set}.
\begin{definition}
        Let $E$ be a completely metrisable topological vector space. A Borel set $U\subset E$ is said to be \textit{prevalent} if there exists a Borel measure $\mu$ on $E$ such that
        \begin{itemize}
            \item[(a)] $0<\mu(V)<\infty$ for some compact subset $V$ of $E$;
            \item[(b)] The set $U+x$ has full $\mu$-measure for all $x\in E$.
        \end{itemize}
        \label{def:Prevalent}
        \end{definition}
        \noindent
In a more general setting, we say that a subset $S\subset{E}$ is prevalent if $S$ contains a prevalent Borel set \cite{Prevalence}.
\begin{definition}
A finite dimensional subspace $P\subset E$ is said to be a \textit{probe} for a set $S\subset E$ if there exists a Borel set $U\subset S$ such that $U+x$ has full Lebesgue measure on $P$ for all $x\in E$.
\label{def:probe}
\end{definition}
\noindent
The existence of a probe is a sufficient condition for a set $S$ to be prevalent. If one finds that $S\subset E$ is prevalent, we say that almost every element of $E$ lies in $S$. One can think of the measure $\mu$ as a means of describing a set of perturbations in $E$ \cite{Prevalence}. In \cite{Prevalence,HuntPrevalence,Huntctsnowherediff}, {one can find examples where the prevalence framework is applied in various settings. An example in the context of dynamical systems is the following, taken from \cite[Proposition 7]{HuntPrevalence}.}
\begin{itemize}
    \item  For $1\leq k \leq \infty$, a $C^k(\mathbb{R}^n,\mathbb{R}^n)$ map having the property that all of its fixed points are hyperbolic (and further, that its periodic points of all periods are hyperbolic) is prevalent.
\end{itemize}
In this example, the authors use a finite dimensional subspace of polynomial functions as a probe.

\subsection{Random dynamical systems and Perron-Frobenius operators}
We will be considering \textit{semi-invertible random dynamical systems}. These can be used to model a wide range of phenomena including systems with changing environments. Theoretical aspects of this field have been developed extensively in \cite{Kifer,ArnoldRDS} where the following definitions have been sourced.
\begin{definition}
\sloppy A \textit{semi-invertible random dynamical system} is a tuple $(\Omega,\mathfrak{F},\mathbb{P},\sigma,X,\mathcal{L})$, where the base $\sigma:\Omega\to \Omega$ is an \textit{invertible}\footnote{$\sigma^{-1}$ is measurable and exists for $\mathbb{P}$-a.e. $\omega\in \Omega$} measure-preserving transformation of the probability space $(\Omega,\mathfrak{F},\mathbb{P})$, $(X,\| \cdot \|)$ is a Banach space, and $\mathcal{L}:\Omega\to \mathfrak{L}(X,X)$ is a family of bounded linear operators of $X$, called the generator.
\label{def:seminvertrds}
\end{definition}
\begin{remark}
    In general, $X$ can be any given Banach space. In this paper, we will be interested in $X$ being the Banach space of analytic functions on an annulus. Generators acting on this space were studied in \cite{MR3592677}.
\end{remark}
\begin{definition}
    A measurable function $f:S\to S$ on a measure space $(S,\mathfrak{D},\mu)$ is a \textit{non-singular transformation} if $\mu(f^{-1}(D))=0$ for all $D\in \mathfrak{D}$ such that $\mu(D)=0$.
\end{definition}\noindent
For a {non-singular transformation} $f: S\to S$ of a measure space $(S,\mathfrak{D},\mu)$, we can associate a corresponding \textit{Perron-Frobenius operator}.  Since {it describes} the evolution of ensembles of points, or densities, such operators serve as a powerful tool in studying the statistical behaviour of trajectories of $f$.
\begin{definition}
\label{def:pfoperator}
    Let $(S,\mathfrak{D},\mu)$ be a measure space and $f:S\to S$ be a non-singular transformation.
    The unique operator $\mathcal{L}_f:L^1(\mu)\to L^1(\mu)$ satisfying the dual relation
    $$\int_D \mathcal{L}_{f} g(x)\, d{\mu}(s) = \int_{f^{-1}(D)}g(x)\, d{\mu}(s),$$
    for every $D\in\mathfrak{D}$ and $g\in L^1(\mu)$ is called the \textit{Perron-Frobenius operator} associated with $f$.
\end{definition}\noindent
In certain cases, $\mathcal{L}_f$ may be restricted (or extended) to a bounded linear operator on another Banach space $X$ in which case the operators are still referred to as Perron-Frobenius operators. Combining the notions from \dfnn{def:seminvertrds} and \dfnn{def:pfoperator}, one can form a semi-invertible random dynamical system from a family of Perron-Frobenius operators $(\mathcal{L}_{f_\omega})_{\omega\in\Omega}$ associated to a set of non-singular transformations $(f_\omega)_{\omega\in\Omega}.$ This forms a Perron-Frobenius operator cocycle.
\begin{example}[Perron-Frobenius operator cocycle]
Consider a semi-invertible random dynamical system $(\Omega,\mathfrak{F},\mathbb{P},\sigma,X,\mathcal{L})$ where $\sigma:\Omega\to \Omega$ is an invertible ergodic measure-preserving transformation, and its generators $\mathcal{L}:\Omega \to \mathfrak{L}(X,X)$ are the Perron-Frobenius operators associated to the non-singular transformations $f_\omega$ of the measure space $(S,\mathfrak{D},\mu)$ given by $\omega \mapsto \mathcal{L}_{f_\omega}$. This gives rise to a Perron-Frobenius operator cocycle
$$(n,\omega)\mapsto \mathcal{L}_{f_\omega}^{(n)}=\mathcal{L}_{f_{\sigma^{n-1}\omega}}\circ \cdots \circ \mathcal{L}_{f_\omega}$$
where $\mathcal{L}_{f_\omega}:X\to X$ is the Perron-Frobenius operator of $f_\omega$. Here the evolution of a density is governed by a cocycle of Perron-Frobenius operators driven by the base dynamics $\sigma:\Omega\to \Omega$.
\end{example}
\begin{definition}
The cocycle $(\mathcal{L}_\omega)_{\omega\in\Omega}$ on a Banach space $X$ is \textit{strongly measurable} if for any fixed $f\in X$, $\omega\mapsto\mathcal{L}_\omega f$ is $(\mathfrak{F}_\Omega,\mathfrak{F}_X)$-measurable.
\end{definition}\noindent
In this definition, $\mathfrak{F}_\Omega$ denotes the $\sigma$-algebra over $\Omega$, and $\mathfrak{F}_X$ denotes the $\sigma$-algebra on $X$.
\begin{definition}
The \textit{index of compactness} of an operator $\mathcal{L}$ denoted $\alpha(\mathcal{L})$, is the infimum of those real numbers $t$ such that the image of the unit ball in $X$ under $\mathcal{L}$ may be covered by finitely many balls of radius $t$.
\label{def:IndexOfCompactness}
\end{definition}\noindent
The index of compactness provides a notion of `how far' an operator is from being compact. {This definition was extended by Thieullen to random compositions of operators in \cite{IOC_TP}.}
\begin{definition}
The \textit{asymptotic index of compactness} for the cocycle $(\mathcal{L}_\omega)_{\omega\in\Omega}$ is
$$\kappa(\omega)=\lim_{n\to \infty} \frac{1}{n}\log \alpha(\mathcal{L}_\omega^{(n)}).$$
\label{def:AssymptoticIndexOfCompactness}
\end{definition}\noindent
We call the cocycle \textit{quasi-compact} if $\kappa<\lim_{n\to \infty} \frac{1}{n}\log||\mathcal{L}_\omega^{(n)}||=:\lambda_1(\omega)$, whose limit exists for $\mathbb{P}$-a.e. $\omega\in\Omega$ {and it is independent of $\omega$}, by the Kingman sub-additive ergodic theorem {\cite{Kingman}}, under the assumption that $\int \log||\mathcal{L}_\omega||\, d\mathbb{P}(\omega)<\infty$. The limit $\lambda_1(\omega)$ is referred to as the top Lyapunov exponent of the Perron-Frobenius operator cocycle, and under some assumptions on the random dynamical system, we can obtain a spectrum of these exponents through {multiplicative ergodic theorems}. One example is \textit{Oseledets decomposition} which splits our space of densities into $\omega$ dependent subspaces which decay/expand according to its associated Lyapunov exponent $\lambda_i(\omega)$. These are constant $\mathbb{P}$-a.e. when $\sigma$ is ergodic.
\begin{theorem}[Oseledets decomposition]
Let $\sigma$ be an invertible ergodic measure-preserving transformation of a probability space $(\Omega,\mathfrak{F},\mathbb{P})$ and let $\omega\mapsto\mathcal{L}_\omega$ be a quasi-compact strongly measurable cocycle of operators acting on a Banach space $X$ with a separable dual satisfying $\int \log||\mathcal{L}_\omega||\, d\mathbb{P}(\omega)<\infty$. Then there exist $1\leq \ell \leq \infty$ exponents $\lambda_1\geq \lambda_2\geq \cdots \geq \lambda_\ell \geq \kappa \geq -\infty$, finite multiplicities $m_1,m_2,\dots ,m_\ell$ and subspaces $V_1(\omega),\dots V_\ell(\omega),W(\omega)$ such that
\begin{itemize}
    \item[(a)] $\dim(V_i(\omega))=m_i$;
    \item[(b)] $\mathcal{L}_\omega V_i(\omega)=V_i(\sigma\omega)$ and $\mathcal{L}_\omega W(\omega)\subset W(\sigma\omega)$;
    \item[(c)] $V_1(\omega)\oplus\cdots \oplus V_\ell(\omega)\oplus W(\omega)=X$;
    \item[(d)] for $f\in V_i(\omega)\setminus\{0\}$, $\lim_{n\to\infty}\frac{1}{n}\log||\mathcal{L}_\omega^{(n)}f||\to \lambda_i$;
    \item[(e)] for $f\in W(\omega)\setminus\{0\}$, $\limsup_{n\to\infty}\frac{1}{n}\log||\mathcal{L}_\omega^{(n)}f||\leq \kappa$.
\end{itemize}
\label{the:OsceledetsDecomp}
\end{theorem}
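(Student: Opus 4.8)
This is a semi-invertible multiplicative ergodic theorem, so the plan is to follow the standard two-stage route: first extract the Lyapunov exponents and their multiplicities from a subadditive argument, and then build the equivariant Oseledets splitting by a limiting construction that uses the separability of $X^*$ to stay measurable.

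\emph{Exponents and multiplicities.} First I would, for each $k\geq 1$, work with the $k$-th approximation number $\mu_k(\mathcal{L})$ of a bounded operator (the operator-norm distance from $\mathcal{L}$ to the operators of rank $<k$), noting $\mu_1=\|\mathcal{L}\|$ and $\mu_k(\mathcal{L})\downarrow\alpha(\mathcal{L})$ as $k\to\infty$, with $\alpha$ the index of compactness of \dfnn{def:IndexOfCompactness}. Since the approximation numbers are submultiplicative under composition, for each fixed $k$ the sequence $n\mapsto\log\bigl(\mu_1(\mathcal{L}_\omega^{(n)})\cdots\mu_k(\mathcal{L}_\omega^{(n)})\bigr)$ is subadditive along the $\sigma$-orbit of $\omega$; combined with the integrability hypothesis $\int\log\|\mathcal{L}_\omega\|\,d\mathbb{P}<\infty$ this lets me apply Kingman's subadditive ergodic theorem \cite{Kingman}, and ergodicity of $\sigma$ makes the limits constant $\mathbb{P}$-a.e. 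Differencing consecutive limits yields a non-increasing sequence $\Lambda_1\geq\Lambda_2\geq\cdots$ with $\inf_k\Lambda_k=\lim_n\tfrac1n\log\alpha(\mathcal{L}_\omega^{(n)})=\kappa$ (an interchange-of-limits lemma, as in Thieullen); the distinct values of the $\Lambda_k$ that are strictly above $\kappa$ are the claimed exponents $\lambda_1>\lambda_2>\cdots$, and $m_i$ is the number of $k$ with $\Lambda_k=\lambda_i$. Quasi-compactness gives $\kappa<\lambda_1$, and since $\Lambda_k\to\kappa$, for every $\varepsilon>0$ only finitely many $\Lambda_k$ exceed $\kappa+\varepsilon$; hence $1\leq\ell\leq\infty$ as stated, with the $\lambda_i$ accumulating only at $\kappa$ when $\ell=\infty$.

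\emph{The equivariant splitting.} Next, for each $p$ with $\lambda_p>\kappa$ I would introduce the forward-equivariant ``slow'' subspace
\[
F_p(\omega)=\Bigl\{\,f\in X:\ \limsup_{n\to\infty}\tfrac1n\log\|\mathcal{L}_\omega^{(n)}f\|\leq\lambda_{p+1}\,\Bigr\},
\]
and check it is closed, of codimension $m_1+\cdots+m_p$, with $\mathcal{L}_\omega F_p(\omega)\subseteq F_p(\sigma\omega)$ and $F_1\supseteq F_2\supseteq\cdots$. The complementary ``fast'' space $U_p(\omega):=V_1(\omega)\oplus\cdots\oplus V_p(\omega)$ I would obtain as a limit of pushforwards from the past: starting from any fixed measurable family $H_p(\cdot)$ of $(m_1+\cdots+m_p)$-dimensional subspaces transverse to $F_p(\cdot)$, set $H_p^{(n)}(\omega)=\mathcal{L}_{\sigma^{-n}\omega}^{(n)}\bigl(H_p(\sigma^{-n}\omega)\bigr)$ and show, using the spectral gap $\lambda_p>\lambda_{p+1}$, that $H_p^{(n)}(\omega)$ is Cauchy in a suitable Grassmannian metric and converges to a subspace $U_p(\omega)$ that is independent of the starting family, satisfies $\mathcal{L}_\omega U_p(\omega)=U_p(\sigma\omega)$, and is complementary to $F_p(\omega)$. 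Setting $V_i(\omega)=U_i(\omega)\cap F_{i-1}(\omega)$ and $W(\omega)=\bigcap_p F_p(\omega)$ then yields (a)--(c); the growth statements (d) and (e) follow by combining the rates built into $U_p$ and $F_p$ with the Kingman limits above (for (e), a vector in $W(\omega)$ growing faster than $\kappa$ would have to have a nonzero component in some finite-dimensional $U_p$, a contradiction).

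\emph{Main obstacle.} The delicate step is the convergence of $H_p^{(n)}(\omega)$: a general Banach space has no singular-value/polar decomposition, so the ``folding toward the dominant directions'' that forces $H_p^{(n)}(\omega)\to U_p(\omega)$ must be proven quantitatively by hand, controlling the relevant angles and transversality through functionals in $X^*$. This is exactly where the hypothesis that $X^*$ is separable is used --- both to make the angle estimates effective and, via a measurable selection theorem (Kuratowski--Ryll-Nardzewski), to guarantee that $\omega\mapsto V_i(\omega)$ and $\omega\mapsto W(\omega)$ are measurable. One must also handle the integrability bookkeeping for Kingman, the (routine) submultiplicativity of the approximation numbers, and the case $\ell=\infty$. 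In practice the cleanest course is to verify that the present hypotheses --- quasi-compact, strongly measurable, $X$ with separable dual, $\int\log\|\mathcal{L}_\omega\|\,d\mathbb{P}<\infty$, and $\sigma$ invertible ergodic --- are precisely those of the semi-invertible Oseledets theorem of Gonz\'alez-Tokman and Quas (building on Thieullen, Froyland--Lloyd--Quas and Lian--Lu), and to invoke it directly.
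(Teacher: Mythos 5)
The paper does not prove this theorem: it is stated in the preliminaries as a background result, namely the semi-invertible multiplicative ergodic theorem of Gonz\'alez-Tokman and Quas (building on Thieullen, Froyland--Lloyd--Quas and Lian--Lu), and is simply invoked. Your sketch correctly identifies this and outlines the standard two-stage argument --- Kingman applied to products of approximation numbers to extract the exponents and multiplicities, then pushforward of transverse complements to the slow spaces to build the equivariant fast spaces, with separability of $X^*$ used for the angle estimates and measurable selection --- which is indeed how the cited proof proceeds. Since you explicitly defer the hard quantitative steps (Grassmannian convergence in the absence of a polar decomposition, measurability of $\omega\mapsto V_i(\omega)$) to the literature rather than carrying them out, your proposal should be read as a correct identification of the result and its proof architecture rather than a self-contained proof; this matches the paper's own treatment, which is to cite the theorem without proof.
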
\noindent
We refer to the set of all $\lambda_i$ as the Lyapunov spectrum of the Perron-Frobenius operator cocycle $(\mathcal{L}_\omega)_{\omega\in\Omega}$.

\section{Finite Blaschke products}
\label{sec:fbc}
In this paper, we study the Lyapunov spectrum of Perron-Frobenius operator cocycles for \textit{finite Blaschke products} fixing the origin. Applications of this family of functions are discussed in \cite{PropertiesaceFBP}. We consider first the autonomous case (where the dynamics are independent of $\omega$).\\ \\ \noindent In what follows, we denote the extended complex plane by $\hat{\mathbb{C}}=\mathbb{C}\cup\{\infty\}$, the unit circle by $S^1$, and the open unit disc by $D_1$.
\begin{definition} A \textit{finite Blaschke product} is a map $T:\hat{\mathbb{C}}\to \hat{\mathbb{C}}$ of the form
\begin{equation}
    T(z) = \rho \prod_{i=1}^n \frac{z-\zeta_i}{1-\bar{\zeta}_iz},
    \label{eqn:nonrandBP}
\end{equation}
where $\rho\in S^1$ and for each $1\leq i\leq n$, $\zeta_i\in D_1$.
The number $n\in \mathbb N$ is called the \textit{degree} of $T$.
The set of finite Blaschke products will be denoted by $\mathfrak B$, and the set of finite Blaschke products of degree $n$ will be denoted by $\mathfrak B_n$.
\label{def:Blaschke product}
\end{definition}

\begin{remark}
When $\rho=1$, \eqref{eqn:nonrandBP} is called a \textit{monic} finite Blaschke product.
\end{remark}
\noindent Finite Blaschke products enjoy many analytical properties described in \cite{PropertiesaceFBP,PropertiesdFBP}, including the following.
\begin{lemma}[Properties of finite Blaschke products]
Let $T$ be a finite Blaschke product. Then
\begin{itemize}
    \item[(a)] $T$ maps the unit circle to itself $(T(S^1)=S^1)$;
    \item[(b)] $T\circ I = I \circ T$, where $I$ is the inversion map $I(z)=1/\bar{z}$;
    \item[(c)] $T$ maps the open unit disk $D_1$ to itself $($and hence maps $\hat{\mathbb{C}}\setminus \bar{D}_1$ to itself$)$;
    \item[(d)] If $T$ is a non-constant map from the closed unit disk to itself that is analytic in the interior and maps the boundary to itself, then $T$ is a finite Blaschke product;
    \item[(e)] The composition of two finite Blaschke products is again a finite Blaschke product.
\end{itemize}
\label{lem:Properties of finite Blaschke products}
\end{lemma}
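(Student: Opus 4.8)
The plan is to prove (a)--(e) in the order listed, since (c) uses (a) and (b), and (d)--(e) use (a)--(c). Throughout I write $b_{\zeta}(z)=(z-\zeta)/(1-\bar{\zeta}z)$ for the elementary factor with $\zeta\in D_1$, so that $T=\rho\prod_{i=1}^{n}b_{\zeta_i}$ with $\rho\in S^1$ and degree $n\geq 1$ (the statements being non-vacuous precisely in that range). For (a): if $|z|=1$ then $\bar z=1/z$, so $1-\bar{\zeta}z=z(\bar z-\bar{\zeta})=z\,\overline{z-\zeta}$, which gives $|1-\bar{\zeta}z|=|z-\zeta|$ and hence $|b_{\zeta}(z)|=1$ on $S^1$; taking the product over $i$ and multiplying by the unimodular constant $\rho$ yields $|T(z)|=1$ on $S^1$, and $T|_{S^1}$ is a continuous self-map of $S^1$ whose winding number equals $n\geq 1$, hence it is onto. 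For (b): substituting $I(z)=1/\bar z$ into a single factor and simplifying gives $b_{\zeta}(I(z))=(1-\zeta\bar z)/(\bar z-\bar{\zeta})=\overline{1/b_{\zeta}(z)}=I(b_{\zeta}(z))$; multiplying over $i$ and using $1/\bar{\rho}=\rho$ (valid since $|\rho|=1$) yields $T\circ I=I\circ T$.

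For (c), I would observe that the poles of $T$ are the points $1/\bar{\zeta}_i$, all of modulus strictly greater than $1$, so $T$ is holomorphic on a neighbourhood of $\bar{D}_1$; since $T$ is non-constant and $|T|\equiv 1$ on $S^1$ by (a), the maximum modulus principle gives $|T(z)|<1$ for all $z\in D_1$, that is $T(D_1)\subset D_1$. The claim for $\hat{\mathbb{C}}\setminus\bar{D}_1$ then follows from (b): such a point is $I(z)$ for some $z\in D_1$, and $T(I(z))=I(T(z))$ with $T(z)\in D_1$, so $T(I(z))\in\hat{\mathbb{C}}\setminus\bar{D}_1$.

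Item (d) is the substantive part, and I expect it to be the main obstacle. Suppose $T$ is non-constant, analytic in $D_1$, continuous on $\bar{D}_1$, with $T(S^1)\subseteq S^1$. First I would show $T$ has a zero in $D_1$: otherwise $1/T$ is analytic in $D_1$, continuous on $\bar{D}_1$, and unimodular on $S^1$, so applying the maximum modulus principle to both $T$ and $1/T$ forces $|T|\equiv 1$ on $\bar{D}_1$ and hence $T$ constant, a contradiction. Since $|T|=1$ on $S^1$ and $T$ is continuous on the compact set $\bar{D}_1$, the zeros of $T$ lie in a compact subset of $D_1$, and being isolated (as $T$ is non-constant analytic) they are finite in number; list them with multiplicity as $\zeta_1,\dots,\zeta_n$ and set $B=\prod_{i=1}^{n}b_{\zeta_i}$. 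Then $T/B$ extends to a zero-free analytic function on $D_1$, continuous on $\bar{D}_1$, with $|T/B|=1$ on $S^1$ by (a); applying the maximum modulus principle to $T/B$ and to $B/T$ shows $T/B\equiv\rho$ for some $\rho\in S^1$, so $T=\rho B$ is a finite Blaschke product of degree $n$. The delicate points are the finiteness of the zero set and the two-sided maximum-modulus comparison; the remainder is bookkeeping.

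Finally, (e) follows from (d): for $T_1,T_2\in\mathfrak{B}$, the composition $T_1\circ T_2$ is analytic in $D_1$, continuous on $\bar{D}_1$, maps $\bar{D}_1$ into itself by (c) together with (a), maps $S^1$ into $S^1$ by two applications of (a), and is non-constant because $T_2$ is an open map and $T_1$ is non-constant on the open set $T_2(D_1)$; hence (d) gives $T_1\circ T_2\in\mathfrak{B}$. Alternatively, one may count winding numbers on $S^1$ to see directly that $\deg(T_1\circ T_2)=\deg(T_1)\deg(T_2)$.
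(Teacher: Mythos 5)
The paper offers no proof of this lemma; it is quoted directly from the cited references on finite Blaschke products, so there is nothing in the paper to compare against line by line. Your argument is correct and is the standard self-contained proof: the identity $|1-\bar{\zeta}z|=|z-\zeta|$ on $S^1$ for (a), the factor-wise computation for (b), the maximum modulus principle for (c), the Fatou-style division by the Blaschke product of the (finitely many, compactly contained) zeros together with the two-sided maximum-modulus comparison for (d), and (d) applied to the composition for (e). The only points worth making explicit if you write this up are the surjectivity in (a) (your winding-number remark handles it, since the degree is $n\geq 1$) and, in (d), that $T/B$ extends continuously to $\bar{D}_1$ because the zeros of $B$ coincide with those of $T$ and lie in a compact subset of $D_1$ — both of which you have already flagged.
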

\noindent In light of the work completed by \cite{gonzaleztokman2018stability}, we restrict ourselves to {expanding} finite Blaschke products when constrained to $S^1$ ($|T^\prime(z)|>1$ for all $z\in S^1$). {Such systems possess many interesting properties, especially when we consider the random setting.}
Taking $r_T(R)=\max_{|z|=R}|T(z)|$, \cite{Tischlerexpand} illustrates that a necessary condition for $T$ to be expanding is that $r_T(R)<R$ for some $0<R<1$. Further, \cite{martin1983expand} provides a sufficient condition for the expansion of a Blaschke product on $S^1$ dependant only on the zeros of the map. In particular, if $\sum_{i=1}^n\frac{1-|\zeta_i|}{1+|\zeta_i|}>1$, then \cite{martin1983expand} illustrates that $T$ is expanding on $S^1$.
\\ \\ \noindent As opposed to studying a single map $T$, we now consider the {random} (non-autonomous) case and let $T_\omega$ be selected by a driving system $\sigma:\Omega\to\Omega$. {In this case, after $n$ iterates, a  point $z\in \hat{\mathbb{C}}$ initialised at the \textit{environment} $\omega\in \Omega$ is mapped to}
$$T_\omega^{(n)}(z):= (T_{\sigma^{n-1}\omega}\circ \cdots \circ T_\omega)(z).$$
\begin{definition}
A \textit{random Blaschke product} or \textit{Blaschke product cocycle} consists of an ergodic invertible measure-preserving transformation $\sigma$ of $(\Omega,\mathfrak{F},\mathbb{P})$ and
 a map $\mathcal{T}_{(n,\rho,\zeta)}:\Omega\times \hat{\mathbb{C}}\to \hat{\mathbb{C}}$, parametrised by $n, \ \rho$ and $\zeta=(\zeta_{1},\cdots,\zeta_{n})$, of the form
\begin{equation} \mathcal{T}_{(n,\rho,\zeta)}(\omega, z)=:T_\omega(z)=\rho_\omega \prod_{i=1}^{n_\omega} \frac{z-\zeta_{i,\omega}}{1-\bar{\zeta}_{i,\omega}z},
    \label{eqn:RFBP}
\end{equation}
where $n:\Omega \to \mathbb{N}\setminus \{0,1\}$ and
$\rho:\Omega \to S^1$ are measurable, and  for each $m\geq 2$ such that $\Omega_m:=\{\omega \in \Omega \ | \ n_\omega = m\}$ is non-empty, the function $\zeta:\Omega_m\to D_1^m$ is measurable. When $n$ is a constant function, we say the cocycle is a \textit{Blaschke product cocycle of degree $n$}.
\end{definition}
\begin{remark}
    {We exclude $n_\omega = 1$, since in that case, the cocycle is not uniformly expanding on $S^1$.}
\end{remark}
\begin{remark}
     We often refer to  random Blaschke products as in \eqref{eqn:RFBP} as $\mathcal{T}_{(n,\rho,\zeta)}$ or $(T_\omega)_{\omega\in\Omega}$, disregarding the notation  of the explicit dependence on $\sigma$ (or even $(n,\rho,\zeta)$) when the context is clear.
\end{remark}

\begin{definition}
    The \textit{space of random Blaschke products}, denoted $\mathfrak{B}(\Omega)$, consists of all random Blaschke products $\mathcal{T}_{(n,\rho,\zeta)}$ as in \eqref{eqn:RFBP}.
\end{definition}

 \begin{definition}
  The \textit{space of Blaschke product cocycles of degree $n$}, denoted $\mathfrak{B}_{n}(\Omega)$, consists of random Blaschke products ${\mathcal{T}_{(n,\rho,\zeta)}}=(T_\omega)_{\omega\in\Omega}$ with $n:\Omega\to \mathbb{N}\setminus \{0,1\}$ satisfying $n_\omega = n$ for every $\omega\in\Omega$. When $n$ is understood, cocycles belonging to $\mathfrak{B}_{n}(\Omega)$ are denoted $\mathcal{T}_{(\rho,\zeta)}$.
 \end{definition}\noindent
 Of particular interest to us are those Blaschke product cocycles fixing the origin.
  \begin{definition}
  \label{def:fbp_fixorig}
 The \textit{space of Blaschke product cocycles fixing the origin}, denoted $\accentset{\circ}{\mathfrak{B}}(\Omega)$, consists of those $\mathcal{T}_{(n,\rho,\zeta)} \in {\mathfrak{B}}(\Omega)$ such that for each $\omega\in\Omega$, there exists $i=1,\dots, n$ such that $\zeta_{i,\omega}=0$.
 Similarly, the \textit{space of Blaschke product cocycles of degree $n$ fixing the origin}, denoted $\accentset{\circ}{\mathfrak{B}}_{n}(\Omega)$, consists of  all $\mathcal{T}_{(\rho,\zeta)} \in {\mathfrak{B}}_{n}(\Omega) \cap \accentset{\circ}{\mathfrak{B}}(\Omega)$.
 \end{definition}

 \begin{remark}
     If $\mathcal{T}_{(\rho,\zeta)}\in \accentset{\circ}{\mathfrak{B}}_{n}(\Omega)$, without loss of generality we assume that $\zeta=(0,\zeta_{2},\dots,\zeta_{n})$. In particular, if $\mathcal{T}_{(\rho,\zeta)}=(T_\omega)_{\omega\in\Omega}\in \accentset{\circ}{\mathfrak{B}}_{n}(\Omega)$ then we take
$${T}_\omega(z) = \rho_\omega z\prod_{i=2}^n\frac{z-\zeta_{i,\omega}}{1-\bar{\zeta}_{i,\omega}z}.$$
This assumption is valid as the cocycle $\mathcal{T}_{(\rho,\zeta)}$ is invariant under permutations of the coordinates of $\zeta$.
\label{rmk:fbp_fix0}
 \end{remark}\noindent
 As discussed in \cite{gonzaleztokman2018stability}, {which generalises the necessary condition of expansion by \cite{Tischlerexpand} to the random setting, if $r_{\mathcal{T}_{(n,\rho,\zeta)}}(R):=\esssup_{\omega \in \Omega}r_{{T}_\omega}(R)<R$ for some $0<R<1$ {(where $r_{T_\omega}(R)=\max_{|z|=R}|T_\omega(z)|$)} then} ${\mathcal{T}_{(n,\rho,\zeta)}}$ is expanding when restricted to $S^1$ for each $\omega\in\Omega$. If $\mathcal{T}_{(n,\rho,\zeta)}$ satisfies $r_{\mathcal{T}_{(n,\rho,\zeta)}}(R)<R$ for some $0<R<1$, we refer to $\mathcal{T}_{(n,\rho,\zeta)}$ as an \textit{admissible Blaschke product cocycle}.

 \begin{definition}
{Let $0<R<1$. The space of all cocycles ${\mathcal{T}_{(n,\rho,\zeta)}}=(T_\omega)_{\omega\in\Omega}\in\mathfrak{B}_{}(\Omega)$ satisfying $r:=r_{\mathcal{T}_{(n,\rho,\zeta)}}(R)<R$ is called the \textit{space of admissible Blaschke product cocycles}, denoted $\mathfrak{B}_{R}^a(\Omega)$. Similarly, the space of all cocycles ${\mathcal{T}_{(n,\rho,\zeta)}}=(T_\omega)_{\omega\in\Omega}\in\mathfrak{B}_{n}(\Omega)$ satisfying $r:=r_{\mathcal{T}_{(n,\rho,\zeta)}}(R)<R$ is called the \textit{space of admissible Blaschke product cocycles of degree $n$}, denoted $\mathfrak{B}_{R,n}^a(\Omega)$. }
\label{def:space_of_afbc}
\end{definition}\noindent
If a Blaschke product cocycle is admissible, we have the following results.
\begin{theorem}[Lyapunov spectrum of a Blaschke product cocycle \cite{gonzaleztokman2018stability}]
Let $\sigma$ be an invertible ergodic measure-preserving transformation of a probability space $(\Omega,\mathfrak{F},\mathbb{P})$. Let  ${\mathcal{T}_{(n,\rho,\zeta)}}=(T_\omega)_{\omega \in \Omega}$ be an admissible Blaschke product cocycle. Let $\mathcal{L}_\omega$ denote the Perron-Frobenius operator of $T_\omega$, acting on the Hilbert space $H^2(A_R)$ on the annulus $A_R:=\{z \ | \ R<|z|<1/R\}$. Then the cocycle is compact and the following hold:
\begin{itemize}
    \item[(a)](Random Fixed Point). There exists a measurable map $x:\Omega \to \bar{D}_r$ (with $x(\omega)=x_\omega$), such that $T_\omega(x_\omega)=x_{\sigma \omega}$. For all $z\in D_R$, $T^{(n)}_{\sigma^{-n}\omega}(z)=T_{\sigma^{-1}\omega}\circ \cdots \circ T_{\sigma^{-n}\omega}(z)\to x_\omega$;
    \item[(b)](Critical Random Fixed Point). If $\mathbb{P}(\{\omega \ | \ T^\prime_\omega (x_\omega)=0\})>0$, then the Lyapunov spectrum of the cocycle is $0$ with multiplicity $1$; and $-\infty$ with infinite multiplicity.
    \item[(c)](Generic case). If $\mathbb{P}(\{\omega \ | \ T^\prime_\omega (x_\omega)=0\})=0$, then define
    $$\Lambda = \int \log|T^\prime_\omega(x_\omega)|\,d\mathbb{P}(\omega).$$
    This satisfies $\Lambda \leq \log(r/R)<0$. If $\Lambda = -\infty$, then the Lyapunov spectrum of the cocycle is $0$ with multiplicity $1$; and $-\infty$ with infinite multiplicity. If $\Lambda>-\infty$, then the Lyapunov spectrum of the cocycle is $0$ with multiplicity $1$; and $n\Lambda$ with multiplicity $2$ for each $n\in \mathbb{N}$. The Oseledets space with exponent $0$ is spanned by ${1}/{(z-x_\omega)}-{1}/{(z-{1}/{\bar{x}_\omega})}$. The Oseledets space with exponent $j\Lambda$ is spanned by two functions, one a linear combination of ${1}/{(z-x_\omega)^2},\cdots,{1}/{(z-x_\omega)^{j+1}}$ with a pole of order $j+1$ at $x_\omega$; the other a linear combination of ${z^{k-1}}/{(1-\bar{x}_\omega z)^{k+1}}$ for $k=1,\dots,j$, with a pole of order $j+1$ at ${1}/{\bar{x}_\omega}$.
\end{itemize}
\label{the:Lyapunov Spectrum of a Blaschke product cocycle}
\end{theorem}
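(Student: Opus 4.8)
The plan is to pass, by a M\"obius conjugacy, to a cocycle fixing the origin, to establish compactness so that the Oseledets theorem applies, and then to analyse the transfer operator explicitly on a family of rational functions, reading off the spectrum with Birkhoff's ergodic theorem. Admissibility means $T_\omega(\bar{D}_R)\subseteq\bar{D}_r$ with $r<R<1$, so by the Schwarz--Pick lemma each $T_\omega$, viewed as a map $D_R\to D_r\subsetneq D_R$, contracts the hyperbolic metric of $D_R$ by a factor at most $r/R$, uniformly in $\omega$. Hence for any $z\in D_R$ the sequence $\big(T^{(n)}_{\sigma^{-n}\omega}(z)\big)_n$ is Cauchy; its limit $x_\omega\in\bar{D}_r$ is independent of $z$, measurable in $\omega$, and satisfies $T_\omega(x_\omega)=x_{\sigma\omega}$, which is (a). Evaluating the same estimate at $x_\omega$ gives $\log|T'_\omega(x_\omega)|\le\log(r/R)+\log(R^2-|x_{\sigma\omega}|^2)-\log(R^2-|x_\omega|^2)$, and integrating against the $\sigma$-invariant measure $\mathbb{P}$ the last two bounded terms telescope, so $\Lambda\le\log(r/R)<0$ whenever $\Lambda$ is defined. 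Conjugating by the disc automorphisms $\psi_\omega(z)=(z-x_\omega)/(1-\bar{x}_\omega z)$ turns $(T_\omega)_{\omega\in\Omega}$ into $\widehat{T}_\omega:=\psi_{\sigma\omega}\circ T_\omega\circ\psi_\omega^{-1}\in\accentset{\circ}{\mathfrak{B}}_{n}(\Omega)$; since $|x_\omega|\le r<R$, the composition operators $C_{\psi_\omega}$ are bounded and boundedly invertible between $H^2(A_R)$ and the Hardy space of a comparable annulus, uniformly in $\omega$, so they intertwine the two Perron--Frobenius cocycles and preserve Lyapunov spectra and Oseledets spaces; and $\int\log|\widehat{T}'_\omega(0)|\,d\mathbb{P}=\Lambda$ by the same telescoping. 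Thus we may assume $T_\omega(z)=\rho_\omega z\prod_{i=2}^n(z-\zeta_{i,\omega})/(1-\bar{\zeta}_{i,\omega}z)$, with fixed points $0$ and $\infty$ and multiplier $a_\omega:=T'_\omega(0)$, $|a_\omega|\le r/R$.

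\textbf{Step 2: compactness and applicability of the multiplicative ergodic theorem.}
Admissibility forces each inverse branch of $T_\omega$ to extend holomorphically to $A_R$ and to map $\bar{A}_R$ into a fixed compact subannulus of $A_R$, uniformly in $\omega$; hence $\mathcal{L}_\omega$ factors through the compact inclusion of the Hardy space of that larger annulus into $H^2(A_R)$, so each $\mathcal{L}_\omega$ is compact and $\kappa(\omega)=-\infty$. Since $\sup_\omega\|\mathcal{L}_\omega\|<\infty$, $H^2(A_R)$ is a separable Hilbert space, and $\lambda_1=0$ (positivity/conservation of the transfer operator caps the growth, while the fixed point found in Step 3 forces $\lambda_1\ge 0$), the cocycle is quasi-compact and Theorem~\ref{the:OsceledetsDecomp} yields an Oseledets decomposition with exponents $\lambda_1\ge\lambda_2\ge\cdots>\kappa=-\infty$; it remains to identify them.

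\textbf{Step 3: spectral analysis of the transfer operator --- the main obstacle.}
The heart of the proof is the explicit computation of $\mathcal{L}_\omega$ on the rational functions whose only poles lie at the fixed points $0$ and $\infty$ (equivalently $x_\omega$ and $1/\bar{x}_\omega$ before recentring): using partial fractions together with the local normal form $T_\omega(w)=a_\omega w+O(w^2)$ at $0$ and, via the inversion symmetry of Lemma~\ref{lem:Properties of finite Blaschke products}(b), at $\infty$, one shows that the closed span $\mathcal{R}(\omega)$ of such functions is cocycle-invariant, that $\mathcal{L}_\omega h_\omega=h_{\sigma\omega}$ for $h_\omega(z)=1/(z-x_\omega)-1/(z-1/\bar{x}_\omega)$ --- so this line is an Oseledets direction of exponent $0$, which a Lasota--Yorke/positivity (cone) argument shows is simple and strictly dominant --- and that for each $j\ge 1$ the two functions displayed in the statement span an equivariant $2$-dimensional subbundle $V_j(\omega)$ on which $\mathcal{L}_\omega$ acts, along the cocycle, by scalars of modulus $|T'_\omega(x_\omega)|^{\,j}$; the complementary invariant subspace is mapped into itself and, by Step 2, carries exponent $-\infty$. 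The delicate points are getting the action on the rational generators exactly right and verifying that the listed linear combinations --- not merely the filtration by pole order --- are genuinely equivariant: this amounts to solving cohomological equations along the base, which is possible precisely because, $\Lambda$ being negative, the exponents already present below level $j$ are strictly larger than $j\Lambda$, so the correction terms are recovered as series summed forward along $\sigma$, convergent by the uniform bounds of Steps 1--2. Granting this, when $\mathbb{P}(T'_\omega(x_\omega)=0)=0$ and $\Lambda>-\infty$, Birkhoff's ergodic theorem gives $\tfrac1n\log\|\mathcal{L}_\omega^{(n)}|_{V_j(\omega)}\|\to j\Lambda$, so the Lyapunov spectrum is $0$ with multiplicity $1$ and $j\Lambda$ with multiplicity $2$ for every $j\in\mathbb{N}$, with the stated Oseledets spaces; this is (c).

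\textbf{Step 4: the collapsing cases.}
If $\Lambda=-\infty$, then for each $j\ge1$ Birkhoff's theorem for integrands bounded above (but possibly equal to $-\infty$) gives $\tfrac1n\sum_{i=0}^{n-1}\log|T'_{\sigma^i\omega}(x_{\sigma^i\omega})|^{\,j}\to-\infty$, so every $V_j$ collapses into the $\kappa$-part and no finite exponent other than $0$ survives. In case (b), where $\mathbb{P}(T'_\omega(x_\omega)=0)>0$, the collapse has a transparent dynamical cause as well: by ergodicity and Poincar\'e recurrence almost every orbit visits $\{\omega:T'_\omega(x_\omega)=0\}$ infinitely often, and at such times the branch of $T_\omega$ fixing $x_\omega$ has the form $w\mapsto x_{\sigma\omega}+c(w-x_\omega)^k+\cdots$ with $k\ge2$, which strictly decreases the order of any pole at the fixed point, so every element of each $V_j$ ($j\ge1$) is eventually pushed into the complement of $\mathcal{R}$. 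In both cases the spectrum is therefore $0$ with multiplicity $1$ and $-\infty$ with infinite multiplicity, proving (b) and completing (c).
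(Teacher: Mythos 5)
This theorem is not proved in the paper at all: it is imported verbatim from \cite{gonzaleztokman2018stability}, so there is no in-paper argument to compare against. Judged on its own terms, your outline reproduces the right architecture of the source proof --- backward compositions contract the hyperbolic metric of $D_R$ into $D_r$ to produce the random fixed point and the bound $\Lambda\le\log(r/R)$; admissibility puts all preimages of a slightly larger annulus $A_{r'}$ ($r<r'<R$) inside $\bar A_R$, so $\mathcal{L}_\omega$ factors through a compact inclusion and $\kappa=-\infty$; and the exponents are read off a triangular action on rational functions with poles only at $x_\omega$ and $1/\bar x_\omega$, via Birkhoff. Steps 1, 2 and 4 are essentially sound (modulo the loose justification of $\lambda_1=0$: ``positivity/conservation'' only controls the single functional $f\mapsto\int_{S^1}f\,dm$, not the $H^2(A_R)$ norm, and in an infinite-dimensional triangular cocycle a bounded diagonal does not by itself cap the growth).

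The genuine gap is in Step 3, which is the heart of the theorem and which you explicitly defer with ``Granting this''. What actually drives the result is the exact identity $\mathcal{L}_{T_\omega}\bigl(1/(z-w)\bigr)=1/(z-T_\omega(w))$ for $w\in D_R$ (a residue computation, equivalently the adjoint action on Szeg\H{o}/Cauchy kernels), together with its reflected counterpart for poles at $1/\bar w$; differentiating $j$ times in $w$ and evaluating at $w=x_\omega$ gives
$\mathcal{L}_\omega\bigl(1/(z-x_\omega)^{j+1}\bigr)=\bigl(T'_\omega(x_\omega)\bigr)^{j}/(z-x_{\sigma\omega})^{j+1}+(\text{lower-order poles at }x_{\sigma\omega})$,
which is precisely the block-triangular structure with diagonal entries $(T'_\omega(x_\omega))^{j}$, the multiplicity $2$ coming from the two pole families, and the identity $\mathcal{L}_\omega h_\omega=h_{\sigma\omega}$ as the $j=0$ case. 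Without exhibiting this identity, the claims that the listed spans are equivariant, that the exponent-$0$ line is simple and dominant, and that $V_j$ carries exponent $j\Lambda$ are unsubstantiated; your appeal to ``partial fractions and the local normal form'' does not produce them, and the cone/Lasota--Yorke argument you invoke for simplicity of the top exponent is not set up. You also assert that a ``complementary invariant subspace'' carries exponent $-\infty$ in the generic case, but you never show that the rational functions in question span a dense subspace of $H^2(A_R)$ (nor account for the constants), which is needed to conclude that the list of exponents is complete. As written, the proposal is a correct roadmap to the proof in \cite{gonzaleztokman2018stability} rather than a proof.
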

\begin{theorem}[Stability of Lyapunov Spectrum \cite{gonzaleztokman2018stability}]
Let $\sigma$ be an ergodic invertible measure-preserving transformation of $(\Omega,\mathfrak{F},\mathbb{P})$. Let  $(T_\omega)_{\omega \in \Omega}$ be an admissible Blaschke product cocycle.
\begin{enumerate}
    \item \label{thm:stable}
    Suppose $\essinf_{\omega\in \Omega}|T^\prime_\omega(x_\omega)|>0$. Then if $(\mathcal{L}_\omega)$ is the Perron-Frobenius cocycle of $(T_\omega)_{\omega\in\Omega}$ and $(\mathcal{L}_\omega^\varepsilon)_{\omega\in\Omega}$ is a family of Perron-Frobenius cocycles such that $\esssup_{\omega \in \Omega}\|\mathcal{L}_{\omega}^\varepsilon - \mathcal{L}_\omega \| \to 0$ as $\varepsilon \to 0$, then $\mu_k^\varepsilon \to \mu_k$ as $\varepsilon \to 0$, where $(\mu_k)$ is the sequence of Lyapunov exponents of $(\mathcal{L}_\omega)$, listed with multiplicity and $(\mu_k^\varepsilon)$ is the sequence of Lyapunov exponents of $(\mathcal{L}_\omega^\varepsilon)_{\omega\in\Omega}$.
    \item \label{thm:unstable}
    Suppose $\essinf_{\omega\in \Omega}|T^\prime_\omega(x_\omega)|=0$. Then there exists a family of Blaschke product cocycles $(T^\varepsilon_\omega)_{\omega \in \Omega}$ such that $\esssup_{\omega \in \Omega}\|\mathcal{L}_{T_\omega^\varepsilon} - \mathcal{L}_{T_\omega} \| \to 0$ as $\varepsilon \to 0$ with the property that the Lyapunov exponents of $(\mathcal{L}_{T_\omega^\varepsilon})_{\omega\in\Omega}$ are $0$ with multiplicity $1$, and $-\infty$ with infinite multiplicity for all $\varepsilon>0$.
\end{enumerate}
\label{the:Stability of Lyapunov Spectrum}
\end{theorem}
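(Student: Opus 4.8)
The two parts call for different techniques: the first (stability) is obtained by verifying the hypotheses of an abstract perturbation theorem for Lyapunov exponents of quasi-compact operator cocycles, while the second (instability) is proved by an explicit construction of a collapsing family.

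For the stability statement, the plan is as follows. By \thrm{the:Lyapunov Spectrum of a Blaschke product cocycle}, admissibility already gives that $(\mathcal{L}_\omega)$ acting on $H^2(A_R)$ is compact with Lyapunov spectrum $\{0,\Lambda,\Lambda,2\Lambda,2\Lambda,\dots\}$, and the hypothesis $\essinf_\omega|T'_\omega(x_\omega)|>0$ forces $\Lambda\in(-\infty,\log(r/R)]$, so the spectral gap is genuine and its position is uniformly controlled. One then invokes an abstract stability theorem for Lyapunov exponents of strongly measurable quasi-compact cocycles on a separable Hilbert space, of the type developed in \cite{Hilbert,crimmins,gonzaleztokman2018stability}: under (i) the strong-measurability and integrability conditions of \thrm{the:OsceledetsDecomp}, (ii) a uniform Lasota--Yorke-type / uniform decay estimate for $(\mathcal{L}_\omega)$, and (iii) uniformly small perturbations in operator norm, the listed Lyapunov exponents converge. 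Conditions (i) and (iii) are part of the hypotheses; the work is to check (ii): admissibility ($r<R$) yields a uniform geometric decay rate for the matrix entries of $\mathcal{L}_\omega$ in the standard orthonormal basis of $H^2(A_R)$, and $\essinf_\omega|T'_\omega(x_\omega)|>0$ supplies the uniform lower bound needed so that the index-of-compactness / essential-spectral-radius estimates hold uniformly in $\omega$ and are stable under the perturbation. The conclusion $\mu_k^\varepsilon\to\mu_k$ is then read off from the abstract theorem.

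For the instability statement, fix $\varepsilon>0$ and, using $\essinf_\omega|T'_\omega(x_\omega)|=0$, choose $\delta=\delta(\varepsilon)>0$ so that $\mathbb{P}(\Omega_\varepsilon)>0$ for $\Omega_\varepsilon:=\{\omega:|T'_\omega(x_\omega)|<\delta\}$. On $\Omega_\varepsilon$ I would first put $T_\omega$ into ``fixing the origin'' normal form by conjugating with the disk automorphism sending $x_\omega$ to $0$, which is uniformly non-degenerate since $|x_\omega|\le r<1$ and induces a uniformly bounded conjugacy of the operator cocycle (the conjugation reduction alluded to in the introduction); in that form $|T'_\omega(0)|=\prod_{i=2}^{n}|\zeta_{i,\omega}|<\delta$, so some factor $|\zeta_{i,\omega}|$ is small, and sliding that zero to the origin produces $T^\varepsilon_\omega$, a degree-$n$ Blaschke product with a critical fixed point, $(T^\varepsilon_\omega)'(x^\varepsilon_\omega)=0$, that differs from $T_\omega$ by $O(\delta^{1/(n-1)})$; for $\omega\notin\Omega_\varepsilon$ set $T^\varepsilon_\omega=T_\omega$. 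One then verifies that $(T^\varepsilon_\omega)$ is still an admissible Blaschke product cocycle (there is a uniform admissibility buffer because $\esssup_\omega r_{T_\omega}(R)<R$), that it is measurable in $\omega$, and that $\esssup_\omega\|\mathcal{L}_{T^\varepsilon_\omega}-\mathcal{L}_{T_\omega}\|\to 0$ as $\varepsilon\to 0$, the last using a Lipschitz-type bound for the map $T\mapsto\mathcal{L}_T$ on $H^2(A_R)$ that is uniform over admissible Blaschke products (this is where analyticity and the admissibility buffer enter). Since $\mathbb{P}(\{(T^\varepsilon_\omega)'(x^\varepsilon_\omega)=0\})\ge\mathbb{P}(\Omega_\varepsilon)>0$, \thrm{the:Lyapunov Spectrum of a Blaschke product cocycle}(b) applies to $(\mathcal{L}_{T^\varepsilon_\omega})$ and gives Lyapunov spectrum $0$ with multiplicity one and $-\infty$ with infinite multiplicity, for every $\varepsilon>0$.

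I expect the main obstacle to be in the first part: converting the qualitative-looking bound $\essinf_\omega|T'_\omega(x_\omega)|>0$ into the quantitative, perturbation-stable uniform estimates (a uniform Lasota--Yorke inequality, or uniform control of the essential spectral radius of $\mathcal{L}_\omega$) demanded by the abstract stability theorem -- equivalently, ruling out that an arbitrarily small operator-norm perturbation could trigger the degeneration of \thrm{the:Lyapunov Spectrum of a Blaschke product cocycle}(b). A secondary difficulty is the uniform continuity estimate $\esssup_\omega\|\mathcal{L}_{T^\varepsilon_\omega}-\mathcal{L}_{T_\omega}\|\to 0$ in the second part, which must be established uniformly in $\omega$ and made compatible with the measurability and admissibility constraints of the construction.
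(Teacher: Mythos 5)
This theorem is imported verbatim from \cite{gonzaleztokman2018stability}; the paper you are reading states it without proof, so there is no internal argument to compare your proposal against. Judged on its own terms, your sketch is broadly consistent with the strategy of the cited work: part (2) is indeed proved there by an explicit construction of the kind you describe (on a positive-measure set where $|T'_\omega(x_\omega)|$ is small, move a zero so that the random fixed point becomes critical, then invoke the ``critical random fixed point'' case of \thrm{the:Lyapunov Spectrum of a Blaschke product cocycle} to collapse the spectrum), and your observation that a cocycle fixing the origin keeps $x^\varepsilon_\omega=0$ after the perturbation, so that the critical-point condition is verifiable, is the right mechanism.

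Two caveats. First, in part (1) the entire analytic content is deferred to an unspecified ``abstract stability theorem,'' and your account of where the hypothesis $\essinf_\omega|T'_\omega(x_\omega)|>0$ enters is off: the cocycle on $H^2(A_R)$ is already compact by admissibility alone, so this hypothesis is not needed for any Lasota--Yorke or index-of-compactness estimate. Its role is to give uniform (in $\omega$) non-degeneracy of the explicit Oseledets splitting of \thrm{the:Lyapunov Spectrum of a Blaschke product cocycle}(c) --- equivalently, uniform bounds on the associated projections and on the separation of the equivariant subspaces --- which is what makes the exponents robust under arbitrary operator-norm perturbations (note the perturbed cocycles in part (1) need not be Blaschke product cocycles, so no structure of the perturbation can be exploited). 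Second, in part (2) the uniform estimate $\esssup_\omega\|\mathcal{L}_{T^\varepsilon_\omega}-\mathcal{L}_{T_\omega}\|\to 0$ and the uniform boundedness of the $\omega$-dependent conjugation (which rests on $|x_\omega|\le r<R$) are asserted rather than proved; these are genuine, if routine, estimates in the cited paper. As a blind reconstruction of an externally cited result the outline is acceptable, but it is a roadmap rather than a proof.
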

\begin{remark}
\sloppy   { In relation to the statement of \thrm{the:Stability of Lyapunov Spectrum}, in the case that $\essinf_{\omega\in \Omega}|T^\prime_\omega(x_\omega)|=0$ we say that the cocycle is unstable. In this case, the unperturbed system may have a complete Lyapunov spectrum, however, arbitrarily small perturbations can lead to a collapse. In the other cases, we call the cocycle stable. }
\end{remark}

\begin{definition}
 Denote by ${\mathfrak{B}}_{R}^{a,S}(\Omega)$, respectively ${\mathfrak{B}}_{R}^{a,U}(\Omega)$, the \textit{space of stable and unstable admissible Blaschke product cocycles} satisfying, respectively, the stability or instability
condition of Theorem~\ref{the:Stability of Lyapunov Spectrum}.

\end{definition}\noindent
Then, we have the following.
\begin{corollary}
\begin{align*}&{\mathfrak{B}}_{R}^{a,S}(\Omega)=\left\{ {\mathcal{T}_{(n,\rho,\zeta)}}\in{\mathfrak{B}}_{R}^a(\Omega) \ \big| \  \essinf_{\omega\in \Omega}|{T}^\prime_\omega(x_\omega)|>0\right\},\\&{\mathfrak{B}}_{R}^{a,U}(\Omega)=\left\{ {\mathcal{T}_{(n,\rho,\zeta)}}\in{\mathfrak{B}}_{R}^a(\Omega) \ \big| \  \essinf_{\omega\in \Omega}|{T}^\prime_\omega(x_\omega)|=0\right\}.\end{align*}
\end{corollary}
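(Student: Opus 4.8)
The displayed identities are a direct reformulation of \thrm{the:Stability of Lyapunov Spectrum}, so the plan is to unwind the relevant definitions and to check that the two conditions appearing on the right-hand sides genuinely partition the admissible class. First I would record that for every admissible cocycle ${\mathcal{T}_{(n,\rho,\zeta)}}=(T_\omega)_{\omega\in\Omega}\in\mathfrak{B}_R^a(\Omega)$, part (a) of \thrm{the:Lyapunov Spectrum of a Blaschke product cocycle} furnishes a measurable random fixed point $x\colon\Omega\to\bar D_r$ with $T_\omega(x_\omega)=x_{\sigma\omega}$; in particular the quantity $\essinf_{\omega\in\Omega}|T'_\omega(x_\omega)|$ is well-defined, and being an essential infimum of nonnegative quantities it lies in $[0,\infty)$. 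Hence, for each such cocycle, exactly one of ``$\essinf_{\omega\in\Omega}|T'_\omega(x_\omega)|>0$'' and ``$\essinf_{\omega\in\Omega}|T'_\omega(x_\omega)|=0$'' holds: the two conditions are mutually exclusive and jointly exhaustive on $\mathfrak{B}_R^a(\Omega)$.

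Next I would identify these two conditions with the hypotheses of items 1 and 2 of \thrm{the:Stability of Lyapunov Spectrum}: the \emph{stability condition} there is precisely $\essinf_{\omega\in\Omega}|T'_\omega(x_\omega)|>0$, and the \emph{instability condition} is precisely $\essinf_{\omega\in\Omega}|T'_\omega(x_\omega)|=0$, with items 1 and 2 justifying the terminology (the former yields continuity of the Lyapunov spectrum under every admissible norm-perturbation of the Perron--Frobenius cocycle, while the latter exhibits an arbitrarily small perturbation whose spectrum is $0$ with multiplicity one and $-\infty$ with infinite multiplicity). Since, by definition, $\mathfrak{B}_R^{a,S}(\Omega)$ (respectively $\mathfrak{B}_R^{a,U}(\Omega)$) consists of exactly those cocycles in $\mathfrak{B}_R^a(\Omega)$ satisfying the stability (respectively instability) condition, the two set equalities follow immediately. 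The dichotomy from the previous paragraph moreover upgrades this to the disjoint decomposition $\mathfrak{B}_R^a(\Omega)=\mathfrak{B}_R^{a,S}(\Omega)\sqcup\mathfrak{B}_R^{a,U}(\Omega)$, consistent with the way ``stable'' and ``unstable'' are used in the remark following \thrm{the:Stability of Lyapunov Spectrum}.

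I do not anticipate a genuine obstacle: the only point requiring (minor) care is the well-definedness of the random fixed point $x_\omega$, and hence of $\essinf_{\omega\in\Omega}|T'_\omega(x_\omega)|$, for every admissible cocycle — which is exactly \thrm{the:Lyapunov Spectrum of a Blaschke product cocycle}(a) — together with the observation that an essential infimum of absolute values is automatically nonnegative, so that ``$>0$'' and ``$=0$'' are complementary and the two sets on the right-hand sides indeed exhaust $\mathfrak{B}_R^a(\Omega)$ without overlap.
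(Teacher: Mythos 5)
Your proof is correct and follows essentially the same route as the paper, which states this corollary without proof as an immediate unwinding of the definition of ${\mathfrak{B}}_{R}^{a,S}(\Omega)$ and ${\mathfrak{B}}_{R}^{a,U}(\Omega)$ via the stability/instability conditions of Theorem~\ref{the:Stability of Lyapunov Spectrum}. Your added remarks on the well-definedness of $x_\omega$ and the dichotomy $>0$ versus $=0$ are accurate but not needed beyond what the paper implicitly assumes.
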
\noindent
We restrict our analysis to those cocycles for which \thrm{the:Lyapunov Spectrum of a Blaschke product cocycle} and \thrm{the:Stability of Lyapunov Spectrum} apply and hence only consider admissible cocycles.
\begin{remark}
In what follows, we will use subscripts and superscripts to denote various subsets of the spaces $\accentset{\circ}{\mathfrak{B}}(\Omega)$ and ${\mathfrak{B}}(\Omega)$.
In particular, the superscript $a$ refers to admissible cocycles, $m$ refers to monic cocycles (studied in \Sec{sec:monicquad}), and $S,U$ is used to specify spaces of stable and unstable cocycles, respectively. The subscript $R$ is used in the case of admissible cocycles when elements must satisfy the condition $r<R$ (we will find in \cor{cor:BaeqB} that this notation is not essential when studying $\accentset{\circ}{\mathfrak{B}}(\Omega)$). Finally, unless specified by the subscript $n$, we consider cocycles of varying degree. In the case that the subscript $n$ is used, we consider cocycles of fixed degree $n$.
\end{remark}

\section{Blaschke product cocycles fixing the origin}
\label{sec:fbc0}
 Our main results concern admissible Blaschke product cocycles of degree $n$ fixing the origin. In this section, we show that all Blaschke product cocycles of degree $n$ fixing the origin are admissible. Then, through \thrm{the:Stability of Lyapunov Spectrum}, we establish stability conditions for such cocycles.\\ \\ \noindent Studying this family of Blaschke product cocycles is of interest, as every $\mathcal{T}_{(\rho,\zeta)} \in \mathfrak{B}_{R,n}^a(\Omega)$ may be conjugated to a cocycle $\hat{\mathcal{T}}_{(\hat{\rho},\hat{\zeta})}\in \accentset{\circ}{\mathfrak{B}}_{n}(\Omega)$ fixing the origin, with stability (in the sense of \thrm{the:Stability of Lyapunov Spectrum}) being preserved. This conjugation procedure is described in \cite{gonzaleztokman2018stability}. {
The difficulty in generalising the results of this work to cocycles that do not necessarily fix the origin lies in establishing smoothness of the coefficient functions parameterising the conjugated Blaschke product cocycle.

\begin{definition}
The \textit{space of admissible Blaschke product cocycles of degree $n$ fixing the origin} denoted $\accentset{\circ}{\mathfrak{B}}_{R,n}^a(\Omega)$ consists of $\mathcal{T}_{(\rho,\zeta)} \in {\mathfrak{B}}_{R,n}^a(\Omega)$ such that for {$\mathbb P$-a.e.} $\omega\in\Omega$, there exists a $i\in\{1,\dots, n\}$ such that $\zeta_{i,\omega}=0$ .
\label{dfnn:AFPBC}
\end{definition}

\begin{remark}
As we will find in \cor{cor:BaeqB}, the $R$ dependence in $\accentset{\circ}{\mathfrak{B}}_{R,n}^a(\Omega)$ can be disregarded since for all $\mathcal{T}_{(\rho , \zeta)}\in \accentset{\circ}{\mathfrak{B}}_n(\Omega)$, $\mathcal{T}_{(\rho , \zeta)}$ is admissible.
\end{remark}
\begin{remark}
 If $\mathcal{T}_{(\rho,\zeta)}\in\accentset{\circ}{\mathfrak{B}}_{R,n}^a(\Omega)$ we follow the same convention as described in \rem{rmk:fbp_fix0}, and assume $\zeta_{1,\omega}=0$ for {$\mathbb P$-a.e.} $\omega\in\Omega$.
\end{remark}
\begin{proposition}[Estimate on $r_{{T}_\omega}(R)$]\label{prop:est rT}
Suppose that $\mathcal{T}_{(\rho,\zeta)}\in\accentset{\circ}{\mathfrak{B}}_{R,n}(\Omega)$ and fix $R<1$. Define $r_{{T}_\omega}(R)=\max_{|z|=R}|{T}_\omega(z)|$. Then
\begin{equation}
    r_{{T}_\omega}(R)\leq R\prod_{i=2}^n \frac{R+|\zeta_{i,\omega}|}{R|\zeta_{i,\omega}|+1}=:M(|\zeta_{2,\omega}|,\cdots,|\zeta_{n,\omega}|).
    \label{eqn:est rT}
\end{equation}
\begin{proof}
Let $z=x+iy$, and $\zeta_{i,\omega} = u_{i,\omega}+i v_{i,\omega}$ where $u_{i},v_{i}:\Omega \to (-1,1)$. Then for $|z|=\sqrt{x^2+y^2}=R$,
\begin{align}
 r_{{T}_\omega}(R)&= \max_{|z|=R}\left|\rho_{\omega}z\prod_{i=2}^n\frac{z-\zeta_{i,\omega}}{1-\bar{\zeta}_{i,\omega}z}\right| \nonumber\\
 &\leq R \max_{x\in [-R,R]} \prod_{i=2}^n\sqrt{\frac{{R^2+|\zeta_{i,\omega}|^2-2u_{i,\omega} x\pm 2v_{i,\omega}\sqrt{R^2-x^2}}}{{R^2|\zeta_{i,\omega}|^2+1-2u_{i,\omega} x\pm 2v_{i,\omega}\sqrt{R^2-x^2}}}} \nonumber \\
 &=:R\max_{x\in [-R,R]}\prod_{i=2}^nf_{i,\omega}^{\pm}(x).
 \label{eqn:fpm def}
\end{align}
It suffices to show that for each $i=2,\dots,n$ that
$$f_{i,\omega}^\pm(x)\leq \frac{R+|\zeta_{i,\omega}|}{R|\zeta_{i,\omega}|+1}$$
for every $\omega \in \Omega$. Let $f_{i,\omega}^\pm (x)=\sqrt{g_{i,\omega}^\pm (x)}$. Elementary calculation shows that $\partial_xg_{i,\omega}^\pm(x)$ does not exist for $x=\pm R$ and thus $\partial_x f_{i,\omega}^\pm$ exists for $x\in (-R,R)$. When $\zeta_{i,\omega} \in D_1$ and $R<1$, $\partial_x f_{i,\omega}^\pm(x) = 0$ if and only if \begin{equation}
    \left(R^2-1\right) \left(|\zeta_{i,\omega}|^2-1\right) \left(u_{i,\omega} \sqrt{R^2-x^2}\pm v_{i,\omega} x\right) = 0
    \label{eqn:fprime}
\end{equation} First suppose that $|\zeta_{i,\omega}|=0$ for every $i=2,\dots,n$, \eqref{eqn:est rT} holds when $r_{T_\omega}(R)\leq R^n$. Indeed, when $|\zeta_{i,\omega}|=0$, $f_{i,\omega}^\pm(x)=R$ for each $i=2,\dots,n$ and every $\omega\in\Omega$. Thus by \eqref{eqn:fpm def}, $r_{T_\omega}(R)\leq R^n$ and \eqref{eqn:est rT} is satisfied. Now suppose that $|\zeta_{i,\omega}|\neq0$. Since $R,|\zeta_{i,\omega}|<1$, \eqref{eqn:fprime} holds if and only if
$$x^*_\pm = \pm \frac{R u_{i,\omega} }{|\zeta_{i,\omega}|}.$$
The function $f_{i,\omega}^\pm(x)$ is continuous on $[-R,R]$ and differentiable on $(-R,R)$, so its maximum/minimum value is attained at $x^*_\pm$, or on the boundary of the domain where $x=\pm R$. Critical points of $f_{i,\omega}^\pm$ are also critical points of $(f_{i,\omega}^\pm(x))^2$ and their nature is preserved since $f_{i,\omega}^\pm(x)\geq 0$. It therefore suffices to determine the maximum value of $(f_{i,\omega}^\pm(x))^2$ determined by 8 candidate values
\begin{align*}
    (f_{i,\omega}^\pm\left(R\right))^2&=\frac{R^2+|\zeta_{i,\omega}|^2-2Ru_{i,\omega}}{R^2|\zeta_{i,\omega}|^2+1-2Ru_{i,\omega}}\\[2mm]
    (f_{i,\omega}^\pm\left(-R\right))^2&=\frac{R^2+|\zeta_{i,\omega}|^2+2Ru_{i,\omega}}{R^2|\zeta_{i,\omega}|^2+1+2Ru_{i,\omega}}\\[2mm]
    \left(f_{i,\omega}^\pm\left(\frac{Ru_{i,\omega}}{|\zeta_{i,\omega}|}\right)\right)^2&=\frac{{R^2+|\zeta_{i,\omega}|^2+\frac{2R}{|\zeta_{i,\omega}|}(-u_{i,\omega}^2\pm v_{i,\omega}|v_{i,\omega}|)}}{{R^2|\zeta_{i,\omega}|^2+1+\frac{2R}{|\zeta_{i,\omega}|}(-u_{i,\omega}^2\pm v_{i,\omega}|v_{i,\omega}|)}}\\[2mm]
    \left(f_{i,\omega}^\pm\left(-\frac{Ru_{i,\omega}}{|\zeta_{i,\omega}|}\right)\right)^2&=\frac{{R^2+|\zeta_{i,\omega}|^2+\frac{2R}{|\zeta_{i,\omega}|}(u_{i,\omega}^2\pm v_{i,\omega}|v_{i,\omega}|)}}{{R^2|\zeta_{i,\omega}|^2+1+\frac{2R}{|\zeta_{i,\omega}|}(u_{i,\omega}^2\pm v_{i,\omega}|v_{i,\omega}|)}}.
\end{align*}
The above expressions are of the form
$$h_\pm(w(u_{i,\omega},v_{i,\omega}))=\frac{{R^2+|\zeta_{i,\omega}|^2+w(u_{i,\omega},v_{i,\omega})}}{{R^2|\zeta_{i,\omega}|^2+1+w(u_{i,\omega},v_{i,\omega})}}$$
for some $w(u_{i,\omega},v_{i,\omega}):D_1\times D_1 \to \mathbb{R}$. Elementary computation shows that $h_\pm(w(u_{i,\omega},v_{i,\omega}))$ is a strictly increasing function in $w(u_{i,\omega},v_{i,\omega})$. The maximum value of $(f_{i,\omega}^\pm(x))^2$ corresponds to the candidate for which $w(u_{i,\omega},v_{i,\omega})$ is the largest.\\ \\ \noindent Fix $v_{i,\omega}\in(-1,0]$ and $u_{i,\omega}\in(-1,1)$, then $w(u_{i,\omega},v_{i,\omega})=\frac{2R}{|\zeta_{i,\omega}|}(u_{i,\omega}^2-v_{i,\omega}|v_{i,\omega}|)$ is largest or equal to other candidate extrema. For these parameter values, $f_{i,\omega}^-(x_-^*)$ is the global maximum of $f_{i,\omega}^\pm(x)$. When $v_{i,\omega}\in[0,1)$ and $u_{i,\omega}\in(-1,1)$, $f_{i,\omega}^+(x_-^*)$ corresponds to the global maximum of $f_{i,\omega}^\pm(x)$. Computing the maximum values attained,
\begin{align*}
    f_{i,\omega}^\pm(x^*_-)&=\sqrt{\frac{{R^2+|\zeta_{i,\omega}|^2+\frac{2R}{|\zeta_{i,\omega}|}(u_{i,\omega}^2\pm v_{i,\omega}|v_{i,\omega}|)}}{{R^2|\zeta_{i,\omega}|^2+1+\frac{2R}{|\zeta_{i,\omega}|}(u_{i,\omega}^2\pm v_{i,\omega}|v_{i,\omega}|)}}}.
\end{align*}
For $u_{i,\omega} \in (-1,1)$, taking $f_{i,\omega}^-$ and $f_{i,\omega}^+$ for $v_{i,\omega}\in(-1,0]$ and $v_{i,\omega}\in[0,1)$ respectively, we can deduce that $\max_{x\in(-1,1)}f_{i,\omega}^\pm(x) = (R+|\zeta_{i,\omega}|)/(R|\zeta_{i,\omega}|+1)$.
\end{proof}
\end{proposition}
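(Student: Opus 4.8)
The plan is to reduce the estimate to a bound on each Blaschke factor separately. Since the cocycle fixes the origin, by the normalisation recorded in \rem{rmk:fbp_fix0} we may take $T_\omega(z)=\rho_\omega z\prod_{i=2}^n\frac{z-\zeta_{i,\omega}}{1-\bar{\zeta}_{i,\omega}z}$, so that on the circle $|z|=R$ one has $|T_\omega(z)|=R\prod_{i=2}^n\bigl|\frac{z-\zeta_{i,\omega}}{1-\bar{\zeta}_{i,\omega}z}\bigr|$. Bounding the maximum of a product by the product of the maxima, it suffices to show that for every $\zeta\in D_1$ and every $z$ with $|z|=R$,
$$\left|\frac{z-\zeta}{1-\bar{\zeta}z}\right|\le\frac{R+|\zeta|}{1+R|\zeta|},$$
after which the claimed inequality $r_{T_\omega}(R)\le M(|\zeta_{2,\omega}|,\dots,|\zeta_{n,\omega}|)$ follows by multiplying these $n-1$ bounds together with the leading factor $R$. (The displayed factor inequality is the sharp form of the triangle inequality for the pseudohyperbolic metric, so it is a known fact, but a direct computation is quickest and self-contained.)

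For the single-factor estimate I would reduce to a one-variable optimisation by introducing $t:=\re(\bar{\zeta}z)$. Expanding moduli and using $|z|^2=R^2$ gives
$$\left|\frac{z-\zeta}{1-\bar{\zeta}z}\right|^2=\frac{R^2+|\zeta|^2-2t}{1+R^2|\zeta|^2-2t}=:\psi(t),$$
and a short computation gives
$$\psi'(t)=\frac{2\bigl(R^2+|\zeta|^2-1-R^2|\zeta|^2\bigr)}{\bigl(1+R^2|\zeta|^2-2t\bigr)^2}=\frac{-2(1-R^2)(1-|\zeta|^2)}{\bigl(1+R^2|\zeta|^2-2t\bigr)^2}<0,$$
using $R<1$ and $|\zeta|<1$ (the denominator being positive since $1+R^2|\zeta|^2-2t\ge(1-R|\zeta|)^2>0$ on the relevant range). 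Thus $\psi$ is strictly decreasing, and by the Cauchy--Schwarz inequality $t=\re(\bar\zeta z)$ ranges exactly over $[-R|\zeta|,R|\zeta|]$ as $z$ runs over $|z|=R$. Hence $\psi$ attains its maximum at $t=-R|\zeta|$, where $\psi(-R|\zeta|)=\frac{R^2+|\zeta|^2+2R|\zeta|}{1+R^2|\zeta|^2+2R|\zeta|}=\bigl(\frac{R+|\zeta|}{1+R|\zeta|}\bigr)^2$; taking square roots yields the factor bound (and, incidentally, shows it is sharp, being attained at $z=-R\zeta/|\zeta|$ when $\zeta\neq0$).

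I would finish by assembling the factor bounds over $i=2,\dots,n$ and checking the degenerate case: if $\zeta_{i,\omega}=0$ the $i$-th factor is simply $|z|/1=R=(R+0)/(1+R\cdot 0)$, so the bound is consistent (in particular $r_{T_\omega}(R)\le R^n$ when all of $\zeta_{2,\omega},\dots,\zeta_{n,\omega}$ vanish). The only genuinely delicate points are spotting that the right change of variable is $t=\re(\bar\zeta z)$ — which turns the problem into optimising a Möbius function of a single real parameter over a closed interval — and pinning down the sign of $\psi'$, which is exactly where the standing hypotheses $R<1$ and $|\zeta_{i,\omega}|<1$ enter; everything else is elementary. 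This route avoids the explicit parametrisation $z=x+iy$ and the attendant case analysis over the sign of the imaginary parts of the $\zeta_{i,\omega}$.
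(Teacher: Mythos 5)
Your proof is correct, and for the heart of the argument it takes a genuinely different (and cleaner) route than the paper. Both proofs share the same outer structure: pull out the factor $|z|=R$ coming from the zero at the origin, bound the maximum of the product by the product of the maxima, and reduce to the single-factor estimate $\bigl|\tfrac{z-\zeta}{1-\bar{\zeta}z}\bigr|\le \tfrac{R+|\zeta|}{1+R|\zeta|}$ on $|z|=R$. Where you diverge is in proving that estimate. The paper writes $z=x+iy$, $\zeta=u+iv$, splits the circle into two branches via the $\pm\sqrt{R^2-x^2}$ sign, locates interior critical points of $f^{\pm}_{i,\omega}$, and compares eight candidate values by monotonicity of a M\"obius function $h_\pm$ in an auxiliary quantity $w$, with a case split on the sign of $v_{i,\omega}$. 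Your substitution $t=\re(\bar{\zeta}z)$ collapses all of this: the squared modulus becomes a single M\"obius function $\psi(t)=\tfrac{R^2+|\zeta|^2-2t}{1+R^2|\zeta|^2-2t}$ on the interval $[-R|\zeta|,\,R|\zeta|]$, whose derivative has the explicit sign $-2(1-R^2)(1-|\zeta|^2)<0$, so the maximum sits at the left endpoint and evaluates to $\bigl(\tfrac{R+|\zeta|}{1+R|\zeta|}\bigr)^2$. Your computations check out: the identity $|1-\bar{\zeta}z|^2=1-2\re(\bar{\zeta}z)+R^2|\zeta|^2$ is correct, the denominator bound $1+R^2|\zeta|^2-2t\ge(1-R|\zeta|)^2>0$ is correct, and the range of $t$ is exactly $[-R|\zeta|,R|\zeta|]$. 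What your approach buys is the elimination of the $\pm$ branch analysis and the sign cases on $v_{i,\omega}$, a uniform treatment of the degenerate case $\zeta=0$ (no separate argument needed, unlike in the paper), and, as a bonus, sharpness of the factor bound at $z=-R\zeta/|\zeta|$, which the paper's candidate-comparison obtains only implicitly. The one cosmetic point worth making explicit if this were written up is that the per-factor maximisation is done independently for each $i$, so the product-of-maxima step is an inequality rather than an equality (the maximising $z$ generally differs between factors); this is exactly what the statement requires, since only an upper bound on $r_{T_\omega}(R)$ is claimed.
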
\noindent
With this, one may establish an estimate on $r_{{\mathcal{T}}_{(\rho,\zeta)}}(R)$.
\begin{proposition}[Estimate on $r_{{\mathcal{T}}_{(\rho,\zeta)}}(R)$]
   Fix $R<1$ and suppose that $\mathcal{T}_{(\rho,\zeta)}\in\accentset{\circ}{\mathfrak{B}}_{R,n}(\Omega)$. Define $\zeta_{i}^*=\esssup_{\omega\in\Omega}|\zeta_{i,\omega}|$, then
$$r_{\mathcal{T}_{(\rho,\zeta)}}(R)\leq M(|\zeta_{2}^*|,\cdots,|\zeta_{n}^*|),$$
where $M$ is as in \eqref{eqn:est rT}. Further, if
\begin{equation}
    \prod_{i=2}^n\frac{R+|\zeta_{i}^*|}{R|\zeta_{i}^*|+1}<1
    \label{eqn:estimate on rmathcalTw for general}
\end{equation}
then $r_{{\mathcal{T}}_{(\rho,\zeta)}}(R)<R$.
\begin{proof}
    To show that $r_{\mathcal{T}_{(\rho,\zeta)}}(R)\leq M(|\zeta_{2}^*|,\cdots,|\zeta_{n}^*|)$, it suffices to show that $M(|\zeta_{2,\omega}|,\cdots, |\zeta_{n,\omega}|)$ is a monotone increasing function in $|\zeta_{2,\omega}|,\dots,|\zeta_{n,\omega}|$. Indeed
    \begin{align*}
        \nabla M(|\zeta_{2,\omega}|,\cdots, |\zeta_{n,\omega}|)&= \Bigg( \frac{1-R^2}{(R|\zeta_{2,\omega}|+1)^2}\prod_{i=3}^n\frac{R+|\zeta_{i,\omega}|}{R|\zeta_{i,\omega}|+1},\\
        &\quad \ \cdots, \frac{1-R^2}{(R|\zeta_{n,\omega}|+1)^2}\prod_{i=2}^{n-1}\frac{R+|\zeta_{i,\omega}|}{R|\zeta_{i,\omega}|+1} \Bigg),
    \end{align*}
    which has positive entries for every $\omega \in \Omega$. Thus, using Proposition~\ref{eqn:est rT}, for $r_{{\mathcal{T}}_{(\rho,\zeta)}}(R)\leq M(|\zeta_{2}^*|,\cdots,|\zeta_{n}^*|)<R$ it is sufficient to have
    $$\prod_{i=2}^n \frac{R+|\zeta_{i}^*|}{R|\zeta_{i}^*|+1}<1,$$
        as claimed.
\end{proof}
\label{prop:admis_cond}
\end{proposition}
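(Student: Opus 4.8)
The plan is to combine the fibrewise estimate of \prop{prop:est rT} with a monotonicity property of the auxiliary function $M$, and then pass to the essential supremum over $\omega$. First I would record that, by \prop{prop:est rT}, for $\mathbb{P}$-a.e.\ $\omega\in\Omega$ one has $r_{T_\omega}(R)\leq M(|\zeta_{2,\omega}|,\dots,|\zeta_{n,\omega}|)$, where $M(t_2,\dots,t_n)=R\prod_{i=2}^n\frac{R+t_i}{Rt_i+1}$ is the bound appearing in \eqref{eqn:est rT}.

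The key step is to show that $M$ is strictly increasing in each of its arguments on $[0,1)^{n-1}$. Writing $\varphi(t)=\frac{R+t}{Rt+1}$, a one-line computation gives $\varphi'(t)=\frac{1-R^2}{(Rt+1)^2}>0$ since $R<1$; hence each partial derivative $\partial_{t_j}M$ equals $\varphi'(t_j)$ times the (strictly positive) remaining factor $R\prod_{i\neq j}\varphi(t_i)$, so $\nabla M$ has strictly positive entries on $[0,1)^{n-1}$. Consequently, since $0\leq|\zeta_{i,\omega}|\leq\zeta_i^*<1$ for $\mathbb{P}$-a.e.\ $\omega$, monotonicity of $M$ in each coordinate yields $M(|\zeta_{2,\omega}|,\dots,|\zeta_{n,\omega}|)\leq M(\zeta_2^*,\dots,\zeta_n^*)$ for $\mathbb{P}$-a.e.\ $\omega$.

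To conclude, I would take the essential supremum over $\omega$ on both sides of the chain $r_{T_\omega}(R)\leq M(|\zeta_{2,\omega}|,\dots,|\zeta_{n,\omega}|)\leq M(\zeta_2^*,\dots,\zeta_n^*)$; as the right-hand side is a deterministic constant, this gives $r_{\mathcal{T}_{(\rho,\zeta)}}(R)=\esssup_{\omega\in\Omega}r_{T_\omega}(R)\leq M(\zeta_2^*,\dots,\zeta_n^*)$, which is the first assertion. For the second assertion, the hypothesis $\prod_{i=2}^n\frac{R+\zeta_i^*}{R\zeta_i^*+1}<1$ is exactly the statement that $M(\zeta_2^*,\dots,\zeta_n^*)=R\prod_{i=2}^n\frac{R+\zeta_i^*}{R\zeta_i^*+1}<R$, whence $r_{\mathcal{T}_{(\rho,\zeta)}}(R)<R$ and the cocycle is admissible. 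I do not anticipate a genuine obstacle here: the only point requiring a little care is the interchange with the essential supremum, which is immediate because $M(\zeta_2^*,\dots,\zeta_n^*)$ is a constant independent of $\omega$; the substance of the argument is the elementary verification that $\varphi'>0$, equivalently that $\nabla M$ has positive entries.
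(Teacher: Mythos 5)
Your proposal is correct and follows essentially the same route as the paper: apply the fibrewise estimate from \prop{prop:est rT}, verify that $\nabla M$ has positive entries (your factorisation through $\varphi(t)=\frac{R+t}{Rt+1}$ with $\varphi'(t)=\frac{1-R^2}{(Rt+1)^2}>0$ is the same computation written multiplicatively), and pass to the essential supremum. The only difference is that you spell out the final $\esssup$ step, which the paper leaves implicit.
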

\begin{corollary}
For every $R<1$, $\accentset{\circ}{\mathfrak{B}}_{R,n}^a(\Omega)=\accentset{\circ}{\mathfrak{B}}_{n}(\Omega)$.
\begin{proof}
    Take $\mathcal{T}_{(\rho,\zeta)}\in\accentset{\circ}{\mathfrak{B}}_{n}(\Omega)$. It suffices to show that $$\frac{R+|\zeta_{i}^*|}{R|\zeta_{i}^*|+1}<1$$
since \prop{prop:admis_cond} will imply that $r_{{\mathcal{T}}_{(\rho,\zeta)}}(R)<R$.
Indeed, since $|\zeta_{i}^*|<1$ we get
\begin{alignat*}{3}
&\phantom{\implies} & \frac{R+|\zeta_{i}^*|}{R|\zeta_{i}^*|+1}&<1\\
&\iff & R+|\zeta_{i}^*|&<R|\zeta_{i}^*|+1\\
&\iff & R(1-|\zeta_{i}^*|)&<1-|\zeta_{i}^*|\\
&\iff & R&<1.
\end{alignat*}
   This inequality is always satisfied since $R<1$. In turn, all Blaschke product cocycles of degree $n$ fixing the origin are admissible {for every $R<1$}.
\end{proof}
\label{cor:BaeqB}
\end{corollary}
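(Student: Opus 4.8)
The plan is to prove the two inclusions separately, with the substantive work already carried out in \prop{prop:est rT} and \prop{prop:admis_cond}. The inclusion $\accentset{\circ}{\mathfrak{B}}_{R,n}^a(\Omega)\subseteq\accentset{\circ}{\mathfrak{B}}_{n}(\Omega)$ is immediate from the definitions: by \dfnn{dfnn:AFPBC} an element of the left-hand side lies in $\mathfrak{B}_{R,n}^a(\Omega)\subseteq\mathfrak{B}_n(\Omega)$ and fixes the origin for $\mathbb{P}$-a.e.\ $\omega$, hence belongs to $\mathfrak{B}_n(\Omega)\cap\accentset{\circ}{\mathfrak{B}}(\Omega)=\accentset{\circ}{\mathfrak{B}}_{n}(\Omega)$.

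For the reverse inclusion, fix $R<1$ and take an arbitrary $\mathcal{T}_{(\rho,\zeta)}\in\accentset{\circ}{\mathfrak{B}}_{n}(\Omega)$; one must show it is admissible, i.e.\ $r_{\mathcal{T}_{(\rho,\zeta)}}(R)<R$. By \prop{prop:admis_cond} it suffices to verify condition \eqref{eqn:estimate on rmathcalTw for general}, namely $\prod_{i=2}^n\frac{R+|\zeta_i^*|}{R|\zeta_i^*|+1}<1$, where $\zeta_i^*=\esssup_{\omega\in\Omega}|\zeta_{i,\omega}|$. Thus the statement reduces to an elementary algebraic check on each factor of this finite product.

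To close, I would argue that every factor is strictly below $1$: since the zeros lie in the open unit disc we have $|\zeta_i^*|<1$, and then
$$\frac{R+|\zeta_i^*|}{R|\zeta_i^*|+1}<1\iff R(1-|\zeta_i^*|)<1-|\zeta_i^*|\iff R<1,$$
which holds by hypothesis. A product of finitely many numbers each in $(0,1)$ is again in $(0,1)$, so \eqref{eqn:estimate on rmathcalTw for general} is satisfied, giving $r_{\mathcal{T}_{(\rho,\zeta)}}(R)<R$ and hence admissibility. I do not foresee a genuine obstacle here: the nontrivial input — the bound $r_{T_\omega}(R)\le M(|\zeta_{2,\omega}|,\dots,|\zeta_{n,\omega}|)$ of \prop{prop:est rT} together with the monotonicity of $M$ used in \prop{prop:admis_cond} — is already available, and the only point needing mild care is that the ``$\mathbb{P}$-a.e.'' qualifier of \dfnn{dfnn:AFPBC} is compatible with passing to essential suprema, which is harmless since altering $\zeta$ on a $\mathbb{P}$-null set of environments leaves $r_{\mathcal{T}_{(\rho,\zeta)}}(R)$ unchanged.
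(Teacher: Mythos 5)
Your proposal is correct and follows essentially the same route as the paper: both reduce admissibility to the sufficient condition of \prop{prop:admis_cond} and verify each factor satisfies $\frac{R+|\zeta_i^*|}{R|\zeta_i^*|+1}<1$ via the same chain of equivalences terminating in $R<1$. The only difference is that you spell out the (trivial) forward inclusion explicitly, which the paper leaves implicit.
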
\noindent
As a result of \cor{cor:BaeqB} and for the sake of notation, we denote from here onwards the space of admissible Blaschke product cocycles of degree $n$ fixing the origin as $\accentset{\circ}{\mathfrak{B}}_{n}(\Omega)$.
\\ \\ \noindent
Take ${\mathcal{T}}_{(\rho,\zeta)}\in \accentset{\circ}{\mathfrak{B}}_{n}(\Omega)$, then through \thrm{the:Stability of Lyapunov Spectrum} we can derive stability conditions for ${\mathcal{T}}_{(\rho,\zeta)}$.
\begin{proposition}[Stability conditions for ${\mathcal{T}}_{(\rho,\zeta)}$]
Suppose that ${\mathcal{T}}_{(\rho,\zeta)} \in \accentset{\circ}{\mathfrak{B}}_{n}(\Omega)$. Then $\mathcal{T}_{(\rho,\zeta)}$ is:
\begin{enumerate}
    \item Stable if
    $$\essinf_{\omega\in\Omega}\left|\prod_{i= 2}^n\zeta_{i,
    \omega}\right|>0.$$
    \item Unstable if
    $$\essinf_{\omega\in\Omega}\left|\prod_{i= 2}^n\zeta_{i,
    \omega}\right|=0.$$
\end{enumerate}
\begin{proof}
    We compute ${T}^\prime_\omega(z)$. Here,
    \begin{align*}
        \rho_\omega^{-1}{T}^\prime_\omega(z)&=\prod_{i=2}^n\frac{z-\zeta_{i,\omega}}{1-\bar{\zeta}_{i,\omega}z} + z \frac{d}{dz}\prod_{i=2}^n\frac{z-\zeta_{i,\omega}}{1-\bar{\zeta}_{i,\omega}z}\\
        &=\left(\prod_{i=2}^n\frac{z-\zeta_{i,\omega}}{1-\bar{\zeta}_{i,\omega}z}\right)\left(1 + z\sum_{i=2}^n \frac{1-|\zeta_{i,\omega}|^2}{(z-\zeta_{i,\omega})(1-\bar{\zeta}_{i,\omega}z)}\right).
    \end{align*}
    The stability conditions from \thrm{the:Stability of Lyapunov Spectrum} require us to evaluate ${T}_\omega^\prime(z)$ at the random fixed point of ${\mathcal{T}}_{(\rho,\zeta)}$, which is given by $x_\omega =0$, since ${\mathcal{T}}_{(\rho,\zeta)}\in \accentset{\circ}{\mathfrak{B}}_{n}(\Omega)$. Thus, since $|\rho_\omega^{-1}|=1$ for every $\omega\in\Omega$
    \begin{align*}
     |{T}^\prime_\omega(0)|&=  \left|\prod_{i=2}^n\zeta_{i,\omega}\right|.
    \end{align*}
    A direct application of \thrm{the:Stability of Lyapunov Spectrum} gives us our desired result.
\end{proof}
\label{prop:stab_instab}
\end{proposition}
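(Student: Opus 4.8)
The plan is to reduce the statement to \thrm{the:Stability of Lyapunov Spectrum}, whose dichotomy is governed entirely by whether $\essinf_{\omega\in\Omega}|T_\omega'(x_\omega)|$ is positive or zero, where $x_\omega$ is the random fixed point supplied by \thrm{the:Lyapunov Spectrum of a Blaschke product cocycle}(a). Since by \cor{cor:BaeqB} every element of $\accentset{\circ}{\mathfrak{B}}_n(\Omega)$ is admissible, both of these theorems are available, so the whole task is to evaluate $|T_\omega'(x_\omega)|$ explicitly.

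First I would pin down the random fixed point. Writing $T_\omega$ in the normal form of \rem{rmk:fbp_fix0}, namely $T_\omega(z)=\rho_\omega z\prod_{i=2}^n\frac{z-\zeta_{i,\omega}}{1-\bar\zeta_{i,\omega}z}$, one sees that $T_\omega(0)=0$ for every $\omega$; thus $\omega\mapsto 0$ is a measurable random fixed point lying in $\bar D_r$, and since \thrm{the:Lyapunov Spectrum of a Blaschke product cocycle}(a) also asserts that every backward orbit started in $D_R$ converges to $x_\omega$, uniqueness forces $x_\omega\equiv 0$.

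Next I would differentiate. By the product rule, $\rho_\omega^{-1}T_\omega'(z)$ equals $\prod_{i=2}^n\frac{z-\zeta_{i,\omega}}{1-\bar\zeta_{i,\omega}z}$ plus a term carrying an explicit factor of $z$; evaluating at $z=0$ annihilates the second term and leaves $T_\omega'(0)=\rho_\omega\prod_{i=2}^n(-\zeta_{i,\omega})$. Because $|\rho_\omega|=1$, this yields $|T_\omega'(x_\omega)|=\bigl|\prod_{i=2}^n\zeta_{i,\omega}\bigr|$ for $\mathbb P$-a.e.\ $\omega$, and hence $\essinf_{\omega\in\Omega}|T_\omega'(x_\omega)|=\essinf_{\omega\in\Omega}\bigl|\prod_{i=2}^n\zeta_{i,\omega}\bigr|$. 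Feeding this equality into the two clauses of \thrm{the:Stability of Lyapunov Spectrum} gives stability when the essential infimum is positive and instability when it is zero.

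I do not expect a genuine obstacle here: the computation is elementary, and the only point deserving an explicit sentence is the identification $x_\omega\equiv 0$, which holds because $0$ is fixed by each fibre map and lies in the contracting disc $\bar D_r$, so it must coincide with the unique random fixed point of the admissible cocycle.
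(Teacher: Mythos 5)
Your proposal is correct and follows essentially the same route as the paper: identify the random fixed point as $x_\omega\equiv 0$, differentiate via the product rule so the second term vanishes at $z=0$, obtain $|T_\omega'(0)|=\bigl|\prod_{i=2}^n\zeta_{i,\omega}\bigr|$, and invoke Theorem~\ref{the:Stability of Lyapunov Spectrum}. Your extra sentence justifying $x_\omega\equiv 0$ via uniqueness of the random fixed point is a small but welcome addition that the paper leaves implicit.
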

\noindent For technical reasons which will become apparent in \Sec{sec:prev_proof}, we restrict ourselves to cocycles parametrised by continuously differentiable coefficient functions. More precisely, we take $\zeta \in C^1(\Omega,\{0\} \times D_{1}^{n-1})$, {where $\Omega$ is assumed to be a $C^1$ manifold.}
\begin{corollary}
Let ${\mathcal{T}}_{(\rho,\zeta)}\in\accentset{\circ}{\mathfrak{B}}_{n}(\Omega)$ with $\zeta \in C^1(\Omega, \{0\} \times D_{1}^{n-1})$ {and suppose $\mathbb P$ has full support}. Then, ${\mathcal{T}}_{(\rho,\zeta)}$ is unstable if and only if there exists $i=2,\dots,n$ and $\omega\in\Omega$ such that $\zeta_{i,\omega}=0$.
\begin{proof}
By \prop{prop:stab_instab}, ${\mathcal{T}}_{(\rho,\zeta)}$ is unstable if and only if
$$\essinf_{\omega\in\Omega}\left|\prod_{i= 2}^n\zeta_{i,
    \omega}\right|=0.$$
    Since $\zeta_{i}\in C^1(\Omega,D_{1})$ {and $\mathbb P$ has full support}, then ${\mathcal{T}}_{(\rho,\zeta)}$ is unstable if and only if
    $$\min_{\omega\in\Omega}\left|\prod_{i= 2}^n\zeta_{i,
    \omega}\right|=0\iff \zeta_{i,
    \omega}=0$$
    for some $i=2,\dots,n$ and $\omega\in\Omega$.
\end{proof}
\label{cor:instab_cond}
\end{corollary}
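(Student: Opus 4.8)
The plan is to unwind the notion of instability through \prop{prop:stab_instab}, and then to convert the essential infimum appearing there into a genuine minimum over $\Omega$ using the continuity of the coefficient functions together with the full-support hypothesis on $\mathbb{P}$. First I would apply \prop{prop:stab_instab}, which reduces the statement to the equivalence
$$\essinf_{\omega\in\Omega}\left|\prod_{i=2}^n\zeta_{i,\omega}\right|=0 \quad\Longleftrightarrow\quad \zeta_{i,\omega}=0 \text{ for some } i\in\{2,\dots,n\} \text{ and some } \omega\in\Omega.$$

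Next, since each $\zeta_i\in C^1(\Omega,D_1)$, the map $\varphi:\Omega\to[0,\infty)$ given by $\varphi(\omega)=\bigl|\prod_{i=2}^n\zeta_{i,\omega}\bigr|$ is continuous. Because $\mathbb{P}$ has full support, the essential infimum of a nonnegative continuous function coincides with its pointwise infimum over $\Omega$: if $\varphi(\omega_0)<c$ for some $\omega_0\in\Omega$, then by continuity $\varphi<c$ on a neighbourhood of $\omega_0$, which has positive $\mathbb{P}$-measure, so $\varphi\ge c$ fails $\mathbb{P}$-almost everywhere; hence $\essinf_{\omega}\varphi=\inf_{\omega\in\Omega}\varphi(\omega)$. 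Using that $\Omega$ is a compact $C^1$ manifold (as for $\Omega=S^1$), continuity of $\varphi$ ensures this infimum is attained, so $\essinf_{\omega}\varphi=\min_{\omega\in\Omega}\varphi(\omega)$.

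Finally, $\min_{\omega\in\Omega}\varphi(\omega)=0$ precisely when there exists $\omega\in\Omega$ with $\prod_{i=2}^n\zeta_{i,\omega}=0$; since $\mathbb{C}$ has no zero divisors, this product vanishes if and only if one of its factors does, i.e. if and only if $\zeta_{i,\omega}=0$ for some $i\in\{2,\dots,n\}$. Chaining these equivalences together with \prop{prop:stab_instab} gives the corollary.

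The only delicate point — and the place where the extra hypotheses of this corollary (as opposed to \prop{prop:stab_instab}) genuinely enter — is the passage from the essential infimum with respect to $\mathbb{P}$ to the minimum over $\Omega$. Here the $C^1$ regularity of $\zeta$ (providing continuity of $\varphi$) and the full-support assumption on $\mathbb{P}$ are both essential: without full support, a measure-zero set on which the product vanishes would be invisible to the essential infimum, and without compactness of $\Omega$ the infimum need not be attained. The remaining steps form an elementary chain of equivalences.
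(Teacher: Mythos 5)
Your proposal is correct and follows essentially the same route as the paper: reduce to \prop{prop:stab_instab}, use $C^1$ regularity and full support of $\mathbb{P}$ to replace the essential infimum by a minimum, and conclude via the factorisation of the product. The paper's own proof is more terse, simply asserting the essinf-to-min step, whereas you spell out the full-support and compactness details justifying it.
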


\section{Prevalence for monic quadratic Blaschke product cocycles fixing zero}
\label{sec:monicquad}
This section addresses prevalence of stability for monic quadratic Blaschke product cocycles fixing the origin. By defining a linear structure on the space of such cocycles we illustrate that this space, equipped with an appropriate metric, is a completely metrisable topological vector space. This allows us to utilise the prevalence framework with the aim to show that almost every monic quadratic Blaschke product cocycle fixing the origin is stable, in the sense of prevalence.

\begin{definition}
The \textit{space of monic quadratic Blaschke product cocycles fixing the origin}, denoted $\accentset{\circ}{\mathfrak{B}}_{2}^m(\Omega)$ consists of all $\mathcal{T}_{(\rho,\zeta)}\in \accentset{\circ}{\mathfrak{B}}_{2}(\Omega)$ such that for every $\omega \in \Omega$, $\rho(\omega)=1$ and $\zeta_1(\omega)=0$.
\end{definition}
\begin{remark}
\label{rem:notationsimp}
   Elements of $\accentset{\circ}{\mathfrak{B}}_{2}^m(\Omega)$ are parameterised solely by $\zeta_2:\Omega \to D_1$. For this reason, and for notation purposes, we write $\mathcal{T}_{\zeta}$ to mean $\mathcal{T}_{(1,(0,\zeta_2))}\in \accentset{\circ}{\mathfrak{B}}_{2}^m(\Omega)$.
\end{remark}
\begin{remark}
By \cor{cor:BaeqB} if $\mathcal{T}_{\zeta}\in \accentset{\circ}{\mathfrak{B}}_{2}^m(\Omega)$, then $\mathcal{T}_{\zeta}$ is admissible.
\end{remark}\noindent
A result of \thrm{the:Stability of Lyapunov Spectrum} is the following.
\begin{corollary}
$$\accentset{\circ}{\mathfrak{B}}_{2}^{m,S}(\Omega)=\left\{ {\mathcal{T}_{\zeta}}\in\accentset{\circ}{\mathfrak{B}}_{2}^m(\Omega) \ \big| \  \essinf_{\omega\in \Omega}|{T}^\prime_\omega(x_\omega)|>0\right\},$$

$$\accentset{\circ}{\mathfrak{B}}_{2}^{m,U}(\Omega)=\left\{ {\mathcal{T}_{\zeta}}\in\accentset{\circ}{\mathfrak{B}}_{2}^m(\Omega) \ \big| \  \essinf_{\omega\in \Omega}|{T}^\prime_\omega(x_\omega)|=0\right\}.$$
\end{corollary}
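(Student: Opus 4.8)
The plan is to obtain the statement as a direct specialisation of the stability dichotomy already established for admissible cocycles. First I would invoke \cor{cor:BaeqB}, which guarantees that every $\mathcal{T}_{\zeta}\in\accentset{\circ}{\mathfrak{B}}_{2}^m(\Omega)$ is admissible; hence \thrm{the:Stability of Lyapunov Spectrum} applies verbatim to each such cocycle. Since that theorem shows that an admissible cocycle is stable when $\essinf_{\omega\in\Omega}|T'_\omega(x_\omega)|>0$ and unstable when $\essinf_{\omega\in\Omega}|T'_\omega(x_\omega)|=0$, and these two cases are exhaustive and mutually exclusive, the space $\accentset{\circ}{\mathfrak{B}}_{2}^m(\Omega)$ splits into the subset where this essential infimum is positive and the subset where it vanishes.

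Next I would match these two subsets with the notation $\accentset{\circ}{\mathfrak{B}}_{2}^{m,S}(\Omega)$ and $\accentset{\circ}{\mathfrak{B}}_{2}^{m,U}(\Omega)$. By the superscript convention, $\accentset{\circ}{\mathfrak{B}}_{2}^{m,S}(\Omega)$ is precisely the set of monic quadratic cocycles fixing the origin that satisfy the stability condition of \thrm{the:Stability of Lyapunov Spectrum}, and likewise $\accentset{\circ}{\mathfrak{B}}_{2}^{m,U}(\Omega)$ for the instability condition. It then remains only to note, as in the proof of \prop{prop:stab_instab}, that for a cocycle fixing the origin the random fixed point of \thrm{the:Lyapunov Spectrum of a Blaschke product cocycle}(a) is $x_\omega=0$ for $\mathbb{P}$-a.e.\ $\omega$, so the conditions $\essinf_{\omega\in\Omega}|T'_\omega(x_\omega)|>0$ and $\essinf_{\omega\in\Omega}|T'_\omega(x_\omega)|=0$ are exactly those appearing on the right-hand sides. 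Assembling these facts yields the two claimed equalities.

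I do not expect a genuine obstacle here: the result is essentially a bookkeeping corollary of \thrm{the:Stability of Lyapunov Spectrum} together with \cor{cor:BaeqB}. The only point requiring a moment's care is checking that the ambient superscript-$S/U$ notation, originally defined on $\mathfrak{B}_R^a(\Omega)$, restricts consistently to $\accentset{\circ}{\mathfrak{B}}_{2}^m(\Omega)$; this is immediate once admissibility is in hand, since $\accentset{\circ}{\mathfrak{B}}_{2}^m(\Omega)\subset\mathfrak{B}(\Omega)$ and the defining inequality $r<R$ holds automatically by \cor{cor:BaeqB}.
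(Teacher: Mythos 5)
Your proposal is correct and follows the same route the paper (implicitly) takes: the paper states this corollary without proof as an immediate consequence of \thrm{the:Stability of Lyapunov Spectrum}, admissibility via \cor{cor:BaeqB}, and the superscript-$S/U$ convention. Your additional observation that $x_\omega=0$ for cocycles fixing the origin is consistent with how the paper uses this fact in \prop{prop:stab_instab}.
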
\noindent
We wish to determine whether $\accentset{\circ}{\mathfrak{B}}_{2}^{m,S}(\Omega)$ is prevalent in $\accentset{\circ}{\mathfrak{B}}_{2}^{m}(\Omega)$. By \dfnn{def:probe}, it suffices to show that $\accentset{\circ}{\mathfrak{B}}_{2}^{m,S}(\Omega)$ contains a probe. In our setting, we want to find a finite dimensional subspace $\mathcal{P}\subset \accentset{\circ}{\mathfrak{B}}_{2}^{m}(\Omega)$ so that for all $\mathcal{T}_{\zeta}\in \accentset{\circ}{\mathfrak{B}}_{2}^{m}(\Omega)$, Lebesgue almost every point in the hyperplane $\mathcal{T}_{\zeta}+\mathcal{P}$ belongs to $\accentset{\circ}{\mathfrak{B}}_{2}^{m,S}(\Omega)$. To make sense of the expression $\mathcal{T}_{\zeta}+\mathcal{P}$, we must define what it means to add elements in $\accentset{\circ}{\mathfrak{B}}_{2}^{m}(\Omega)$. Defining a diffeomorphism $\Phi:D_1\to \mathbb{C}$ allows us to do this precisely.
\begin{proposition}[Properties of $\Phi(z)$]
Define the map
\begin{equation}\label{eq:defphi}
\Phi(z) := \frac{z}{\sqrt{1-|z|^2}}.
\end{equation}
Then $\Phi:D_{1}\to \mathbb{C}$ is a diffeomorphism with inverse
\begin{equation}
    \Phi^{-1}(z) = \frac{z}{\sqrt{1+|z|^2}}.
    \label{eq:invphi}
\end{equation}
\begin{proof}
We first show that $\Phi$ is a bijective mapping from $D_1$ to $\mathbb{C}$ with inverse given by \eqref{eq:invphi}. It suffices to show that $(\Phi\circ \Phi^{-1})(z)=(\Phi^{-1}\circ \Phi)(z)=z$. A direct computation of these compositions shows that $\Phi$ is a bijection onto $\mathbb{C}$ with inverse as stated. Indeed for $z\in D_{1}$, we recover a point $\Phi(z)\in\mathbb{C}$.\\ \\ \noindent To show that $\Phi$ is a diffeomorphism, we consider $\Phi$ mapping to a 2-dimensional real manifold. In this case, it suffices to show that $\Phi$ is differentiable in the real sense. Taking $z=x+iy$, we set
$$\Phi(x,y) = \left( \frac{x}{\sqrt{1-x^2-y^2}},\frac{y}{\sqrt{1-x^2-y^2}} \right).$$
For $|z|<1$, this is well defined and the Jacobian matrix is
\begin{equation}
    D\Phi(x,y) =
 \begin{pmatrix}
 \displaystyle \frac{1-y^2}{\left(1-x^2-y^2\right)^{3/2}} & \displaystyle
 \frac{x y}{\left(1-x^2-y^2\right)^{3/2}} \\
 \displaystyle  \frac{x y}{\left(1-x^2-y^2\right)^{3/2}} & \displaystyle
 \frac{1-x^2}{\left(1-x^2-y^2\right)^{3/2}}
\end{pmatrix}.
\label{eqn:DPhi}
\end{equation}
Each component of $D\Phi(x,y)$ is continuous and differentiable for $|z|<1$. Furthermore,
\begin{align*}
    \det(D\Phi(x,y))
    &=\frac{1}{\left(1-x^2-y^2\right)^2}.
\end{align*}
Since $0\leq|z|<1$, $\det(D\Phi(x,y))\neq 0$ for all $(x,y)\in \mathbb{R}^2$, and $\Phi$ is a diffeomorphism.
\end{proof}
\label{prop:PhiRhat(z)}
\end{proposition}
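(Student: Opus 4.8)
The plan is to verify the two assertions separately: first that $\Phi$ is a bijection of $D_1$ onto $\mathbb{C}$ whose inverse is the map displayed in \eqref{eq:invphi}, and then that both $\Phi$ and this inverse are smooth in the real sense, identifying $\mathbb{C}$ with $\mathbb{R}^2$.

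For bijectivity I would first record the identity $|\Phi(z)|^2 = |z|^2/(1-|z|^2)$, so that $1+|\Phi(z)|^2 = 1/(1-|z|^2)$ and hence $\sqrt{1+|\Phi(z)|^2} = (1-|z|^2)^{-1/2}$. Feeding this into the candidate inverse gives $\Phi^{-1}(\Phi(z)) = \Phi(z)\sqrt{1-|z|^2} = z$ for every $z\in D_1$. Symmetrically, for $w\in\mathbb{C}$ one has $|\Phi^{-1}(w)|^2 = |w|^2/(1+|w|^2)$, so $1-|\Phi^{-1}(w)|^2 = 1/(1+|w|^2)$, and the parallel computation yields $\Phi(\Phi^{-1}(w)) = w$. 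Since $\Phi^{-1}$ plainly maps $\mathbb{C}$ into $D_1$ (its modulus is $|w|/\sqrt{1+|w|^2}<1$) while $\Phi$ maps $D_1$ into $\mathbb{C}$, these two computations show the maps are mutually inverse bijections. A more conceptual alternative would be to note that $\Phi$ is radial, $\Phi(re^{i\theta}) = \phi(r)e^{i\theta}$ with $\phi(r)=r/\sqrt{1-r^2}$ strictly increasing (since $\phi'(r)=(1-r^2)^{-3/2}>0$) from $[0,1)$ onto $[0,\infty)$, its inverse being $s\mapsto s/\sqrt{1+s^2}$.

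For the smoothness I would write $z=x+iy$ and observe that $\Phi(x,y)=\bigl(x(1-x^2-y^2)^{-1/2},\,y(1-x^2-y^2)^{-1/2}\bigr)$ is $C^\infty$ on the open disc $\{x^2+y^2<1\}$, the denominator being smooth and nonvanishing there, and that $\Phi^{-1}(u,v)=\bigl(u(1+u^2+v^2)^{-1/2},\,v(1+u^2+v^2)^{-1/2}\bigr)$ is $C^\infty$ on all of $\mathbb{R}^2$; since we have already checked that the two maps are set-theoretic inverses, this already makes $\Phi$ a diffeomorphism. If one prefers to invoke the inverse function theorem, the alternative is to compute $D\Phi$ directly and verify $\det D\Phi(x,y)=(1-x^2-y^2)^{-2}\neq 0$ on $D_1$, so that $\Phi$ is a local diffeomorphism and, being a bijection, a global one.

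I do not anticipate a genuine obstacle here; the only points worth flagging are that ``diffeomorphism'' must be understood in the $C^\infty$ real sense, since $\Phi$ is not holomorphic (it involves $|z|$), and that $\Phi$ cannot extend continuously to the closed disc -- it must blow up on $\partial D_1$ precisely in order to surject onto the whole plane.
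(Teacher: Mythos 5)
Your proposal is correct and follows essentially the same route as the paper: verify that the two displayed maps are mutual inverses, then check smoothness by passing to real coordinates. You actually carry out the composition computation (via the identity $1+|\Phi(z)|^2=1/(1-|z|^2)$) that the paper dismisses as ``a direct computation,'' and your primary smoothness argument---observing that both $\Phi$ and $\Phi^{-1}$ are explicitly $C^\infty$ where defined, so no inverse function theorem is needed once bijectivity is known---is marginally more economical than the paper's computation of $D\Phi$ and its determinant, which you nonetheless include as an alternative. No gaps.
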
 \noindent
When $\Phi$ takes vector valued input, we \textit{broadcast} $\Phi$ across each vector element through $\Phi_n:D_1^n \to \mathbb{C}^n$.
\begin{definition}
 Let $z = (z_1,\cdots,z_n)\in D_1^n$ and $\Phi$ be as in \eqref{eq:defphi}. We define the \textit{broadcasting function}\footnote{{This is simply the component-wise extension of the one-coordinate map to higher dimensions.}}  $\Phi_n:D_1^n \to \mathbb{C}^n$ as
 $$\Phi_n(z)=(\Phi(z_1),\cdots, \Phi(z_n))$$
 with inverse $\Phi_n^{-1}:\mathbb{C}^n \to D_1^n$ given by
 $$\Phi_n^{-1}(z)=(\Phi^{-1}(z_1),\cdots, \Phi^{-1}(z_n)).$$
\end{definition} \noindent
With $\Phi_n(z)$, we define addition and scalar multiplication in $\accentset{\circ}{\mathfrak{B}}_{2}^{m}(\Omega)$.
\begin{definition}
Suppose that $\mathcal{T}_{\xi},\mathcal{T}_{\chi}\in\accentset{\circ}{\mathfrak{B}}_{2}^{m}(\Omega)$. We define
$$\mathcal{T}_{\xi}+\mathcal{T}_{\chi}:={\mathcal{T}}_{\Phi_2^{-1}(\Phi_2(\xi)+\Phi_2(\chi))},$$
and for $\alpha \in\mathbb{C}$
$$\alpha\cdot \mathcal{T}_{\xi}:={\mathcal{T}}_{{\Phi^{-1}_2(\alpha \Phi_2(\xi))}}.$$
\label{def:addition_of_fbp}
\end{definition}
\begin{remark}
    Since $\mathcal{T}_{\xi},\mathcal{T}_{\chi}\in\accentset{\circ}{\mathfrak{B}}_{2}^{m}(\Omega)$, then we emphasise that $\xi = (0,\xi_{2}), \chi = (0,\chi_{2})$ for all $\omega \in \Omega$ in the above definition (see {Remark} \ref{rem:notationsimp}).
\end{remark}\noindent
In \fig{fig:maptoC} we illustrate the operation of addition in $\accentset{\circ}{\mathfrak{B}}_{2}^{m}(\Omega)$.

\begin{figure}[H]
\resizebox{\textwidth}{!}{%
\begin{tikzpicture}[
	dot/.style={circle, fill, inner sep=2.15pt},
	myarrow/.style={line width=0.5pt, -{Latex[length=3mm]}}]

\begin{scope}[shift={(-8,0)}]

	\newcommand\CircleRadius{4.5}
	\newcommand\AxisRatio{1.3}

	\fill[Green!30] (0,0) circle (\CircleRadius);
	\draw[Green!70, line width=1.5pt, dashed] (0,0) circle (\CircleRadius);

	\draw[line width=0.5pt, {Latex[length=3mm, width=3mm]}-{Latex[length=3mm, width=3mm]}] (-\AxisRatio*\CircleRadius,0) -- (\AxisRatio*\CircleRadius,0);
	\draw[line width=0.5pt, {Latex[length=3mm, width=3mm]}-{Latex[length=3mm, width=3mm]}] (0,-\AxisRatio*\CircleRadius) -- (0,\AxisRatio*\CircleRadius);

	\node[dot, red] at (0,0) (ori) {};
	\node[dot, inner sep=1pt] at (0,0) {};

	\node[dot, red] at (0,2) (reddot) {};
	\node[dot] at (-1, 1.3) (circleft) {};
	\node[dot] at (1, 1.3) (circright) {};
	\draw[myarrow] (circleft) -- (reddot);
	\draw[myarrow] (circright) -- (reddot);

	\node[below = 1pt of circleft] {\LARGE $\chi_{2, \omega}$};
	\node[below = 1pt of circright] {\LARGE $\xi_{2, \omega}$};
	\node[above = 1pt of reddot] {\LARGE $\chi_{2, \omega}^{\xi_{2, \omega}} = \xi_{2, \omega}^{\chi{2, \omega}}$};
	\node[below = 1pt of ori, xshift=4pt] {\LARGE $\chi_{1, \omega} = \xi_{1, \omega} = \xi_{1, \omega}^{\chi_{1, \omega}} = \chi_{1, \omega}^{\xi_{1, \omega}}$};

	\node at (0, \AxisRatio*\CircleRadius + 0.4) {\huge $D_1$};
\end{scope}

\begin{scope}[shift={(8,0)}]
	
	\newcommand\SquareHalfLength{4.8}
	\newcommand\AxisRatio{1.3}

	\fill[Green!30] (-\SquareHalfLength,-\SquareHalfLength) -- (-\SquareHalfLength,\SquareHalfLength) -- (\SquareHalfLength,\SquareHalfLength) -- (\SquareHalfLength,-\SquareHalfLength) -- (-\SquareHalfLength, -\SquareHalfLength);
	\draw[Green!70, line width=1.5pt, dashed] (-\SquareHalfLength,-\SquareHalfLength) -- (-\SquareHalfLength,\SquareHalfLength) -- (\SquareHalfLength,\SquareHalfLength) -- (\SquareHalfLength,-\SquareHalfLength) -- (-\SquareHalfLength, -\SquareHalfLength);

	\draw[line width=0.5pt, {Latex[length=3mm, width=3mm]}-{Latex[length=3mm, width=3mm]}] (-\AxisRatio*\SquareHalfLength,0) -- (\AxisRatio*\SquareHalfLength,0);
	\draw[line width=0.5pt, {Latex[length=3mm, width=3mm]}-{Latex[length=3mm, width=3mm]}] (0,-\AxisRatio*\SquareHalfLength) -- (0,\AxisRatio*\SquareHalfLength);

	\node[dot, red] at (0,0) (sqori) {};
	\node[dot, inner sep=1pt] at (0,0) {};

	\node[dot, red] at (0,2) (reddot) {};
	\node[dot] at (-1.5, 2) (sqleft) {};
	\node[dot] at (1.5, 2) (sqright) {};
	\draw[myarrow] (sqleft) -- (reddot);
	\draw[myarrow] (sqright) -- (reddot);

	\node[below = 1pt of sqleft] {\LARGE $\Phi_2(\chi_{2, \omega})$};
	\node[below = 1pt of sqright] {\LARGE $\Phi_2(\xi_{2, \omega})$};
	\node[above = 1pt of reddot] {\LARGE $\Phi(\chi_{2, \omega}^{\xi_{2, \omega}}) +  \Phi(\xi_{2, \omega}^{\chi{2, \omega}})$};
	\node[below = 1pt of sqori, xshift=0pt] {\LARGE $\Phi(\chi_{1, \omega}) = \Phi(\xi_{1, \omega}) = \Phi(\xi_{1, \omega}^{\chi_{1, \omega}}) = \Phi(\chi_{1, \omega}^{\xi_{1, \omega}})$};
	
	\node at (0, \AxisRatio*\SquareHalfLength + 0.4) {\huge $\mathbb{C}$};
\end{scope}

\draw[myarrow] (3,-3) to [bend left] (-3,-3);
\node at (0, 4.5) {\huge $\Phi_2$};
\draw[myarrow] (-3,3) to [bend left] (3,3);
\node at (0, -4.5) {\huge $\Phi_2^{-1}$};

\end{tikzpicture}
}
\caption{Addition of cocycles ${\mathcal{T}}_{\xi}$ and ${\mathcal{T}}_{\chi}$. Here we denote ${\xi}_{n}^{\chi_{n}}=\Phi^{-1}(\Phi(\xi_{n})+\Phi(\chi_{n}))={\chi}_{n}^{\xi_{n}}$. The new cocycle ${\mathcal{T}}_{\xi}+{\mathcal{T}}_{\chi}={\mathcal{T}}_{\left({\xi}_{1}^{\chi_{1}},{\xi}_{2}^{\chi_{2}}\right)}={\mathcal{T}}_{\left({\chi}_{1}^{\xi_{1}},{\chi}_{2}^{\xi_{2}}\right)}$.}
    \label{fig:maptoC}
\end{figure}
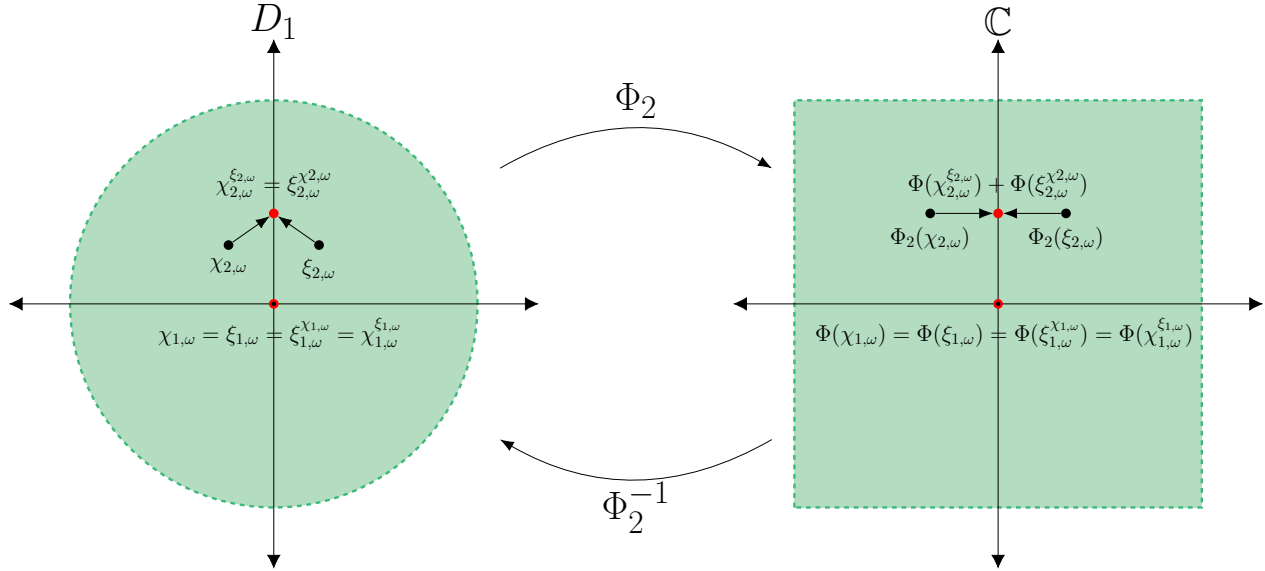
\begin{remark}
    We emphasise that \fig{fig:maptoC} further illustrates that addition in $\accentset{\circ}{\mathfrak{B}}_{2}^{m}(\Omega)$ is commutative in that ${\xi}_{n}^{\chi_{n}}={\chi}_{n}^{\xi_{n}}$.
\end{remark}

\begin{proposition}
{Let $\mathcal{T}_\phi,\mathcal{T}_\chi\in\accentset{\circ}{\mathfrak{B}}_{2}^m(\Omega)$}. The space $(\accentset{\circ}{\mathfrak{B}}_{2}^m(\Omega),+,\cdot)$ equipped with the metric
$$d(\mathcal{T}_{\phi},\mathcal{T}_{\chi})=\esssup_{\omega\in\Omega}|\Phi(\phi_{2,\omega})-\Phi(\chi_{2,\omega})|$$
is a complete topological vector space.
\begin{proof}
An elementary calculation shows that $d$ is indeed a metric on $\accentset{\circ}{\mathfrak{B}}_{2}^m(\Omega)$. We first show that $(\accentset{\circ}{\mathfrak{B}}_{2}^m(\Omega),+,\cdot)$ is complete. Let $(\mathcal{T}_{\phi,k})_{k=1}^\infty\subset \accentset{\circ}{\mathfrak{B}}_{2}^m(\Omega)$ be a Cauchy sequence with respect to $d$, where
\begin{equation}
    \mathcal{T}_{\phi,k}(\omega,z)=:T_{\omega,k}(z) = z \frac{z-\phi_{2,\omega,k}}{1-\bar{\phi}_{2,\omega,k}z}.
    \label{eqn:seq}
\end{equation}
We construct a limiting cocycle $\mathcal{T}_\phi \in \accentset{\circ}{\mathfrak{B}}_{2}^m(\Omega)$ such that $d(\mathcal{T}_{\phi,k},\mathcal{T}_\phi)\to 0$ as $k\to \infty$. If $(\mathcal{T}_{\phi,k})_{k=1}^\infty$ is Cauchy in $d$, then it follows that the sequence $(\Phi(\phi_{2,\omega,k}))_{k=1}^\infty$ is Cauchy in $L^\infty(\mathbb{P})$. Since $L^\infty(\mathbb{P})$ is a Banach space, the sequence $(\Phi(\phi_{2,\omega,k}))_{k=1}^\infty$ must converge to some function $g \in L^\infty(\mathbb{P})$ for $\mathbb{P}$-a.e. $\omega\in\Omega$. Note that $\Phi^{-1}:\mathbb{C}\to D_1$ is a diffeomorphism, thus letting $\Phi^{-1}(g_\omega)=\phi_{2,\omega}\in D_1$, we see that the limiting cocycle of the Cauchy sequence $(\mathcal{T}_{\phi,k})_{k=1}^\infty$ is $\mathcal{T}_{\phi}$, parameterised by $\phi = (0,\phi_{2})$.
Also, clearly $\mathcal{T}_{\phi}$ is a monic quadratic Blaschke product cocycle fixing the origin.
Hence, $\mathcal{T}_{\phi}\in \accentset{\circ}{\mathfrak{B}}_{2}^m(\Omega)$.\\ \\ \noindent We now show that $(\accentset{\circ}{\mathfrak{B}}_{2}^m(\Omega),+,\cdot)$ is a topological vector space. It suffices to prove that the operations $\cdot+\cdot :\accentset{\circ}{\mathfrak{B}}_{2}^m(\Omega)\times \accentset{\circ}{\mathfrak{B}}_{2}^m(\Omega) \to \accentset{\circ}{\mathfrak{B}}_{2}^m(\Omega)$ and $\cdot:\mathbb{C}\times \accentset{\circ}{\mathfrak{B}}_{2}^m(\Omega)\to \accentset{\circ}{\mathfrak{B}}_{2}^m(\Omega)$ are continuous. Suppose that we have sequences $(\mathcal{T}_{\phi,k})_{k=1}^\infty,(\mathcal{T}_{\chi,k})_{k=1}^\infty \subset  \accentset{\circ}{\mathfrak{B}}_{2}^m(\Omega)$ such that $\mathcal{T}_{\phi,k}\to \mathcal{T}_{\phi}$ and  $\mathcal{T}_{\chi,k}\to \mathcal{T}_{\chi}$ in $d$ as $k\to\infty$. Then using \dfnn{def:addition_of_fbp},
\begin{align*} d(\mathcal{T}_{\phi,k}+\mathcal{T}_{\chi,k},\mathcal{T}_{\phi}+\mathcal{T}_{\chi})&= \esssup_{\omega\in\Omega}|\Phi(\phi_{2,\omega,k})+\Phi(\chi_{2,\omega,k})-\Phi(\phi_{2,\omega})-\Phi(\chi_{2,\omega})|\\
&\leq d(\mathcal{T}_{\phi,k},\mathcal{T}_{\phi})+d(\mathcal{T}_{\chi,k},\mathcal{T}_{\chi}).
\end{align*}
This tends to zero in $d$ as $k\to \infty$ and hence addition is continuous. Indeed scalar multiplication is also continuous. Take $\alpha \in\mathbb{C}$. By \dfnn{def:addition_of_fbp}
\begin{align*}
    d(\alpha\mathcal{T}_{\phi,k},\alpha \mathcal{T}_{\phi})&=\esssup_{\omega\in\Omega}|\alpha\Phi(\phi_{2,\omega,k})-\alpha\Phi(\phi_{2,\omega})|\\
    &= |\alpha| d(\mathcal{T}_{\phi,k},\mathcal{T}_{\phi}),
\end{align*}
which tends to zero in $d$ as $k\to \infty$.
\end{proof}
\label{prop:cmtvs}
\end{proposition}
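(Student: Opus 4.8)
The plan is to transport the whole structure to the Banach space $L^\infty(\Omega,\mathbb P;\mathbb C)$ via the coordinate map
$$\Psi:\accentset{\circ}{\mathfrak{B}}_{2}^m(\Omega)\longrightarrow L^\infty(\Omega,\mathbb P;\mathbb C),\qquad \Psi(\mathcal T_\zeta)=\bigl(\omega\mapsto\Phi(\zeta_{2,\omega})\bigr).$$
By \rem{rem:notationsimp} an element of $\accentset{\circ}{\mathfrak{B}}_{2}^m(\Omega)$ is determined by the single coefficient function $\zeta_2:\Omega\to D_1$, and by \prop{prop:PhiRhat(z)} the map $\Phi:D_1\to\mathbb C$ is a bijection whose inverse maps $\mathbb C$ back into $D_1$; hence $\Psi$ is a bijection with inverse $g\mapsto\mathcal T_{(0,\Phi^{-1}\circ g)}$ (one checks that $\Phi^{-1}\circ g$ is again measurable and $D_1$-valued, and that $\Phi\circ\zeta_2$ is essentially bounded in the regime we work in, which holds automatically when $\Omega$ is compact and the coefficient functions are continuous, as stipulated in \Sec{sec:fbc0}). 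By \dfnn{def:addition_of_fbp}, $\Psi$ carries the operations $+$ and $\cdot$ on $\accentset{\circ}{\mathfrak{B}}_{2}^m(\Omega)$ to ordinary addition and scalar multiplication on $L^\infty$, and directly from the defining formula $d(\mathcal T_\phi,\mathcal T_\chi)=\|\Psi(\mathcal T_\phi)-\Psi(\mathcal T_\chi)\|_{L^\infty}$.

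With this identification the proof becomes bookkeeping. First I would record that $d$ is a metric: symmetry and the triangle inequality are inherited from $\|\cdot\|_{L^\infty}$, and positive-definiteness is injectivity of $\Psi$, i.e.\ of $\Phi$. Next, the vector-space axioms for $(\accentset{\circ}{\mathfrak{B}}_{2}^m(\Omega),+,\cdot)$ transport verbatim from those of $L^\infty$ through $\Psi$, the neutral element being $\mathcal T_{(0,0)}$ and the additive inverse of $\mathcal T_\zeta$ being $\mathcal T_{(0,\Phi^{-1}(-\Phi(\zeta_2)))}$; the only real content is closure under the two operations, which is exactly the statement that $\Phi^{-1}$ maps $\mathbb C$ into $D_1$. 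For completeness I would take a $d$-Cauchy sequence $(\mathcal T_{\phi,k})_k$, note that $g_k:=\Phi\circ\phi_{2,k}$ (where $\phi_{2,k}:\Omega\to D_1$ is the coefficient function of $\mathcal T_{\phi,k}$) is Cauchy in $L^\infty(\mathbb P)$, invoke completeness of $L^\infty$ to obtain a limit $g$, and set $\mathcal T_\phi:=\mathcal T_{(0,\Phi^{-1}\circ g)}\in\accentset{\circ}{\mathfrak{B}}_{2}^m(\Omega)$, so that $d(\mathcal T_{\phi,k},\mathcal T_\phi)=\|g_k-g\|_{L^\infty}\to0$.

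Finally, to see that $+$ and $\cdot$ are continuous I would use the triangle inequality in $L^\infty$: for $\mathcal T_{\phi,k}\to\mathcal T_\phi$ and $\mathcal T_{\chi,k}\to\mathcal T_\chi$ in $d$,
$$d(\mathcal T_{\phi,k}+\mathcal T_{\chi,k},\,\mathcal T_\phi+\mathcal T_\chi)\le d(\mathcal T_{\phi,k},\mathcal T_\phi)+d(\mathcal T_{\chi,k},\mathcal T_\chi)\to0,$$
and for $\alpha_k\to\alpha$ in $\mathbb C$,
$$d(\alpha_k\cdot\mathcal T_{\phi,k},\,\alpha\cdot\mathcal T_\phi)\le|\alpha_k-\alpha|\,\|\Psi(\mathcal T_{\phi,k})\|_{L^\infty}+|\alpha|\,d(\mathcal T_{\phi,k},\mathcal T_\phi)\to0,$$
the last bound using that $\|\Psi(\mathcal T_{\phi,k})\|_{L^\infty}$ is bounded because the sequence converges. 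I do not anticipate a serious obstacle; the one point needing genuine care is the well-definedness of the linear structure — that $\Psi$ is a true bijection onto $L^\infty(\Omega,\mathbb P;\mathbb C)$ — which is exactly where \prop{prop:PhiRhat(z)} enters, both for surjectivity of $\Phi$ (giving closure under $+$ and $\cdot$) and for $\Phi^{-1}$ being a homeomorphism (so that $L^\infty$-limits pull back to legitimate $D_1$-valued coefficient functions), and where one must monitor the essential boundedness of $\Phi\circ\zeta_2$.
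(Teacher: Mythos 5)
Your proposal is correct and takes essentially the same route as the paper: identify a cocycle with the function $\omega\mapsto\Phi(\zeta_{2,\omega})$, use completeness of $L^\infty(\mathbb P)$ together with the fact that $\Phi^{-1}$ maps $\mathbb C$ into $D_1$ to produce the limiting cocycle, and verify continuity of the operations via the triangle inequality. If anything you are slightly more thorough than the paper, since you check joint continuity of scalar multiplication in $(\alpha,\mathcal T_\phi)$ (which is what the topological vector space axioms actually require) and you explicitly flag the essential-boundedness of $\Phi\circ\zeta_2$, a point the paper's proof passes over.
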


\begin{remark}
 {Let $\mathcal{S}_\chi\in\accentset{\circ}{\mathfrak{B}}_{2}^m(\Omega)$.} In \cite{gonzaleztokman2018stability}, an alternative metric
$$d^\prime(\mathcal{T}_{\phi},\mathcal{S}_{\chi})=\esssup_{\omega\in\Omega}\max_{z\in S^1} |T_\omega(z)-S_\omega(z)|$$
is used. As suggested in \cite{PropertiesaceFBP}, if $\mathcal{T}_\phi \in \accentset{\circ}{\mathfrak{B}}_{2}^m$
\footnote{In our setting, the space $\accentset{\circ}{\mathfrak{B}}_{2}^m$ could be regarded as those $\mathcal{T}_\phi \in \accentset{\circ}{\mathfrak{B}}_{2}^m(\Omega)$ that are $\omega$-independent.}, then uniform convergence of $\mathcal{T}_{\phi,k}$ on compact subsets of $D_1$ is equivalent to convergence of the coefficient function $\phi_{2,k}$ in the Euclidean norm. Suppose that $\accentset{\circ}{\mathfrak{B}}_{2}^m$ is equipped with the topology of uniform convergence on compact subsets of $D_1$ through
$$d''(\mathcal{T}_{\phi}, \mathcal{S}_{\chi})=\sum_{n=2}^\infty \frac{1}{2^n}\sup_{z\in D_{1-\frac{1}{n}}}|T(z)-S(z)|.$$
If we assume $(\mathcal{T}_{\phi,k})_{k=1}^\infty \subset \accentset{\circ}{\mathfrak{B}}_{2}^m$ converges in $d^\prime$ to $\mathcal{T}_{\phi}\in \accentset{\circ}{\mathfrak{B}}_{2}^m$ then
\begin{align*}
    d''(\mathcal{T}_{\phi,k},\mathcal{T}_\phi)&=\sum_{n=2}^\infty \frac{1}{2^n}\sup_{z\in D_{1-\frac{1}{n}}}|T_k(z)-T(z)|\\
    &\leq \max_{z\in D_1} |T_k(z)-T(z))| \sum_{n=2}^\infty \frac{1}{2^n}\\
    &= d'(\mathcal{T}_{\phi,k},\mathcal{T}_\phi).
\end{align*}
Hence, convergence in $d'$ implies convergence in $d''$. Thus, if  $\mathcal{T}_{\phi,k}\to \mathcal{T}_\phi$ in $d'$, then $\phi_{2,k}\to \phi_2$ in the standard Euclidean norm, meaning that $\mathcal{T}_{\phi,k}\to \mathcal{T}_\phi $ in $d$ since $\Phi$ is a diffeomorphism. Further, if $\mathcal{T}_{\phi,k}\to \mathcal{T}_\phi$ in $d$, then
\begin{align}
    d'(\mathcal{T}_{\phi,k},\mathcal{T}_\phi)&=\max_{z\in S^1}\left| z\left(\frac{z-\phi_{2,k}}{1-\bar{\phi}_{2,k}z}- \frac{z-\phi_{2}}{1-\bar{\phi}_{2}z} \right) \right|\nonumber \\
     &\leq\max_{z\in S^1}\frac{(|z|^2+1)|\phi_{2,k}-\phi_{2}|+|z||\bar{\phi}_{2 ,k}\phi_{2}-\phi_{2,k}\bar{\phi}_{2}|}{|1-|z||\phi_{2,k}|||1-|z||\phi_{2}||} \nonumber\\
    &\leq C_k\left(2d(\mathcal{T}_{\Phi^{-1}_2(\phi),k},\mathcal{T}_{\Phi^{-1}_2(\phi)}) + d(\mathcal{T}_{(0,\Phi^{-1}(\mathfrak{Im}(\bar{\phi}_{2,k}{\phi}_{2})))},\mathcal{T}_{(0,0)}) \right)
    \label{eqn:Ck}
\end{align}
where $C_k = \frac{1}{|1-|\phi_{2,k}|||1-|\phi_2||}$ is uniformly bounded since $|\phi_{2,k}|,|\phi_2|<1$ and $\phi_{2,k}\to \phi_2$. This implies that $\mathcal{T}_{\phi,k}\to \mathcal{T}_\phi $ in $d'$ since $d(\mathcal{T}_{\Phi^{-1}_2(\phi),k},\mathcal{T}_{\Phi^{-1}_2(\phi)})$ and $ d(\mathcal{T}_{(0,\Phi^{-1}(\mathfrak{Im}(\bar{\phi}_{2,k}{\phi}_{2})))},\mathcal{T}_{0})$ tend to zero under the assumption that $\mathcal{T}_{\phi,k}\to \mathcal{T}_\phi$ in $d$. Finally, if $\mathcal{T}_{\phi,k}\to \mathcal{T}_\phi$ in $d''$, then we know that $\mathcal{T}_{\phi,k}\to \mathcal{T}_\phi$ in $d$, as discussed above. Since  $\mathcal{T}_{\phi,k}\to \mathcal{T}_\phi$ in $d$ if and only if  $\mathcal{T}_{\phi,k}\to \mathcal{T}_\phi$ in $d'$ we arrive at the following chain of implications for sequences in $\mathfrak{B}_{2}^m$,
$$\mathrm{Convergence \ in} \ d \iff \mathrm{Convergence \ in} \ d' \iff \mathrm{Convergence \ in}\ d''.$$
Unfortunately, these same relations do not stand for $(\mathcal{T}_{\phi,k})_{k=1}^\infty \subset \accentset{\circ}{\mathfrak{B}}_{2}^m(\Omega)$. In fact, convergence in $d$ gives rise to convergence in $d'$ only when stronger assumptions are made on the regularity of the coefficient function parametrising $\mathcal{T}_{\phi,k}$, and the limiting cocycle $\mathcal{T}_{\phi}$. If $|\phi_{2,\omega,k}|$ and $|\phi_{2,\omega}|$ are bounded away from $1$ uniformly over $\omega\in  \Omega$ and $k\in \mathbb N$, then one can uniformly control $C_k$ in \eqref{eqn:Ck} and relate
$\mathrm{convergence \ in} \ d$ with $\mathrm{convergence \ in}\ d'.$
\end{remark}

\begin{remark}
    {Although it does not take a crucial part in the majority of the proofs that follow, \prop{prop:cmtvs} is essential to apply the prevalence framework to $\accentset{\circ}{\mathfrak{B}}_{2}^m(\Omega)$. Alluding to \dfnn{def:Prevalent} we require this additional vector space structure on $\accentset{\circ}{\mathfrak{B}}_{2}^m(\Omega)$ which we have now obtained. }
\end{remark}
\noindent
Our main result for this section illustrates that, when $\Omega=S^1$, stable monic quadratic Blaschke product cocycles fixing the origin are prevalent among cocycles parametrised by $C^1$ coefficient functions. Before proving the main result of this section, we recall the following result.

\begin{lemma}[Sard's Theorem]
    {Suppose $M$ and $N$ are $C^1$ manifolds with or without boundary, and $F : M \to N$ is a $C^1$ map. If $\dim (M) < \dim (N)$ , then $F(M)$ has Lebesgue measure zero in $N$.}
    \label{lem:diffcurve_leb0}
\end{lemma}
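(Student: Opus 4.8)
The plan is to reduce the statement to a purely local, Euclidean estimate and then exploit the strict dimension mismatch $\dim(M)=m<n=\dim(N)$ through a crude volume count. Since $M$ is a ($C^1$, second countable) manifold, it admits a countable atlas $\{(U_\alpha,\varphi_\alpha)\}$; writing $F(M)=\bigcup_\alpha F(U_\alpha)$ and using that Lebesgue measure is countably subadditive, it suffices to show each $F(U_\alpha)$ is null in $N$. Composing with a chart $(V,\psi)$ of $N$ meeting the image, and recalling that $C^1$ diffeomorphisms (the transition maps, and the coordinate representation $\psi\circ F\circ\varphi_\alpha^{-1}$) carry Lebesgue-null sets to Lebesgue-null sets, I reduce everything to the following claim: if $U\subset\mathbb{R}^m$ is open and $f:U\to\mathbb{R}^n$ is $C^1$ with $m<n$, then $f(U)$ has $n$-dimensional Lebesgue measure zero. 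For manifolds with boundary the charts take values in a half-space $\mathbb{H}^m$; the same argument applies verbatim, since a relatively open subset of $\mathbb{H}^m$ is still exhausted by cubes (one may extend $f$ to a $C^1$ map on a genuine open neighbourhood if convenient).

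For the Euclidean claim I would first exhaust $U$ by countably many closed cubes $Q\subset U$, so that once more by countable subadditivity it is enough to prove $\leb(f(Q))=0$ for a single cube. Because $Df$ is continuous and $Q$ is compact, $\|Df\|$ is bounded on $Q$, and the mean value inequality makes $f|_Q$ Lipschitz with some constant $L$. Now partition $Q$, of side length $s$, into $k^m$ congruent subcubes of side $s/k$; each subcube has diameter $\sqrt{m}\,s/k$, so its image under $f$ has diameter at most $L\sqrt{m}\,s/k$ and is therefore contained in a Euclidean ball of that radius, whose $n$-dimensional volume is at most $c_n(L\sqrt{m}\,s/k)^n$.

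Summing over the $k^m$ subcubes gives the outer-measure bound $\leb(f(Q))\le k^m\cdot c_n(L\sqrt{m}\,s)^n k^{-n}=C\,k^{m-n}$, where $C=c_n(L\sqrt{m}\,s)^n$ does not depend on $k$. This is precisely where the hypothesis enters: since $m<n$ we have $k^{m-n}\to 0$ as $k\to\infty$, forcing $\leb(f(Q))=0$. Taking the countable union over cubes, then over charts of $M$, and finally over charts of $N$ completes the proof.

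I do not expect a genuine obstacle here: this is the \emph{easy}, low-regularity half of Sard's theorem, in which one controls the whole image rather than merely the critical values, and $C^1$ regularity — indeed mere local Lipschitzness — suffices. The only points requiring care are organisational: managing the three nested countable unions (cubes inside charts of $M$, against charts of $N$), invoking diffeomorphism-invariance of null sets to pass between coordinate representations, and treating boundary charts by working in a half-space. The quantitative heart is the single inequality $\leb(f(Q))\le C\,k^{m-n}$, whose conclusion hinges exactly on the strict dimension drop $m<n$.
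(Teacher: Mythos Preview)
Your argument is correct and is the standard proof of this easy case of Sard's theorem: reduce to charts, exhaust by cubes, use the Lipschitz bound from $C^1$ on compacta, and count volumes to obtain the $k^{m-n}$ decay. Note, however, that the paper does not supply its own proof of this lemma; it is stated as a classical result and immediately followed by an example illustrating its use, so there is no proof in the paper to compare against.
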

{\begin{example}
    Suppose $M=S^1$, $N=\mathbb{R}^n$ and $F\in C^1(S^1,\mathbb{R}^n)$. Then, Sard's theorem tells us that $\leb({F(S^1)})=0$ in $\mathbb{R}^n$.
\end{example}}
\label{sec:prev_proof}
\begin{theorem}
    Let $\Omega = S^1$ and suppose that { $\mathbb P$ has full support}. Then stability is prevalent in
    $$\mathcal{E} = \left\{ {\mathcal{T}}_{\zeta} \in \accentset{\circ}{\mathfrak{B}}_{2}^{m}(\Omega) \ \big| \ \zeta \in C^1(\Omega,\{0\}\times D_{1}) \right\}.$$
    \label{thrm:prev_thrm}
    \end{theorem}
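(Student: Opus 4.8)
The plan is to construct a one-dimensional probe $\mathcal{P} = \{\alpha \cdot \mathcal{T}_{\xi_0} : \alpha \in \mathbb{C}\}$ for some fixed, carefully chosen cocycle $\mathcal{T}_{\xi_0} \in \mathcal{E}$, and then show that for every $\mathcal{T}_\zeta \in \mathcal{E}$, Lebesgue-almost every point of the affine line $\mathcal{T}_\zeta + \mathcal{P}$ lies in the stable set $\accentset{\circ}{\mathfrak{B}}_2^{m,S}(\Omega) \cap \mathcal{E}$. By Definition~\ref{def:probe} and the fact that the existence of a probe implies prevalence, this suffices. The natural candidate is $\xi_0$ with $\xi_{0,2}(\omega)$ a constant function, say $\xi_{0,2}(\omega) \equiv \Phi^{-1}(1)$ (or any nonzero constant in $D_1$), so that $\Phi(\xi_{0,2,\omega}) \equiv 1$; then by Definition~\ref{def:addition_of_fbp}, the point $\mathcal{T}_\zeta + \alpha \cdot \mathcal{T}_{\xi_0}$ is the cocycle parametrised in the $\mathbb{C}$-coordinate by $\omega \mapsto \Phi(\zeta_{2,\omega}) + \alpha$. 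The cocycle is stable precisely when this coordinate function never vanishes, i.e. when $\alpha \notin -\Phi(\zeta_2(\Omega))$; here I will invoke Corollary~\ref{cor:instab_cond}, which says instability for $C^1$ cocycles is equivalent to $\zeta_{2,\omega} = 0$ for some $\omega$, equivalently $\Phi(\zeta_{2,\omega}) = 0$ for some $\omega$ since $\Phi(0) = 0$.

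The key step is then the measure estimate. Since $\zeta_2 \in C^1(S^1, D_1)$ and $\Phi$ is a diffeomorphism (Proposition~\ref{prop:PhiRhat(z)}), the composition $\Phi \circ \zeta_2 : S^1 \to \mathbb{C}$ is $C^1$. By Sard's theorem (Lemma~\ref{lem:diffcurve_leb0}), since $\dim(S^1) = 1 < 2 = \dim(\mathbb{C})$, the image $\Phi(\zeta_2(S^1))$ has Lebesgue measure zero in $\mathbb{C} \cong \mathbb{R}^2$. Hence the set of ``bad'' parameters $\{\alpha \in \mathbb{C} : \alpha \in -\Phi(\zeta_2(S^1))\}$, being a reflection of a Lebesgue-null set, is Lebesgue-null in $\mathbb{C}$. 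Therefore for Lebesgue-a.e. $\alpha \in \mathbb{C} \cong \mathcal{P}$, the cocycle $\mathcal{T}_\zeta + \alpha \cdot \mathcal{T}_{\xi_0}$ is stable. I should also check that $\mathcal{T}_\zeta + \alpha \cdot \mathcal{T}_{\xi_0}$ actually lies in $\mathcal{E}$ for every $\alpha$: the coefficient function $\omega \mapsto \Phi^{-1}(\Phi(\zeta_{2,\omega}) + \alpha)$ is $C^1$ because it is a composition of $C^1$ maps ($\zeta_2$, then translation by the constant $\alpha$, then $\Phi^{-1}$), so membership in $\mathcal{E}$ is preserved; and admissibility is automatic by Corollary~\ref{cor:BaeqB}.

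To finish I would take $U = \accentset{\circ}{\mathfrak{B}}_2^{m,S}(\Omega) \cap \mathcal{E}$ (which is Borel, being defined by the open condition $\essinf_\omega |\prod \zeta_{i,\omega}| > 0$ via Proposition~\ref{prop:stab_instab}) and verify the probe definition: for every $x = \mathcal{T}_\zeta \in \mathcal{E}$, the translate $U + x$ — equivalently the set of $\alpha \in \mathcal{P}$ with $\mathcal{T}_\zeta + \alpha\cdot\mathcal{T}_{\xi_0} \in U$ — has full Lebesgue measure on $\mathcal{P}$ by the argument above. The main obstacle, and the only genuinely delicate point, is ensuring the vector-space bookkeeping of Definition~\ref{def:addition_of_fbp} interacts correctly with Sard's theorem: one must confirm that $-\Phi(\zeta_2(S^1))$ is the precise bad set (equivalently that the sole failure of stability along the line is the vanishing of the $\mathbb{C}$-coordinate somewhere on $S^1$), which relies on the full-support hypothesis on $\mathbb{P}$ and on the equivalence in Corollary~\ref{cor:instab_cond}. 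The rest is routine once the probe $\mathcal{P}$ is identified and $C^1$-regularity of the translated coefficient functions is checked.
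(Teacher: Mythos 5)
Your proposal is correct and is essentially the paper's own argument: the paper also uses a one-(complex-)dimensional probe of constant cocycles $\mathcal{P}=\{\mathcal{T}_{(0,\lambda)} : \lambda\in D_1\}$, reduces stability along the translated line via Corollary~\ref{cor:instab_cond} to the condition that the shifted coefficient curve avoids the origin, and applies Sard's theorem (Lemma~\ref{lem:diffcurve_leb0}) to conclude the bad parameter set is the Lebesgue-null image of a $C^1$ curve $S^1\to\mathbb{R}^2$. The only (immaterial) difference is that you work in the linear coordinate $\alpha=\Phi(\lambda)\in\mathbb{C}$, where the bad set is $-\Phi(\zeta_2(S^1))$, while the paper works in the disc coordinate $\lambda\in D_1$, where it is $-\zeta_2(S^1)$; these are exchanged by the diffeomorphism $\Phi$, so both are null.
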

    \begin{proof}
     Let $v(\lambda)=(0,\lambda)\in D_1^2$ and set $\mathcal{P}=\{{\mathcal{T}}_{v(\lambda)} \ | \ \lambda \in D_1\}$. We claim that $\mathcal{P}$ is a probe for $\accentset{\circ}{\mathfrak{B}}_{2}^{m,S}(\Omega)$.
     We wish to show that the set of all
     $\lambda \in D_1$ for which $\mathcal{T}_{\zeta}+{\mathcal{T}}_{v(\lambda)}$ is unstable, has zero Lebesgue measure. Namely, for quadratic cocycles, through {Corollary~\ref{cor:instab_cond},}  we aim to show that
    $$\leb\left(\left\{\lambda\in D_1 \ \big| \ \min_{\omega\in\Omega} \left| \zeta_{2,\omega}^\lambda\right|=0\right\}\right)=0.$$
    Since $\Phi$ is a diffeomorphism
    \begin{align*}
   & \left\{\lambda\in D_1 \ \big| \ \min_{\omega\in\Omega} \left| \zeta_{2,\omega}^\lambda\right|=0\right\}\\&=\left\{\lambda\in D_1 \ \big| \ \Phi^{-1}(\Phi(\zeta_{2,\omega})+\Phi(\lambda))=0\text{ for some }\omega \in \Omega \right\} \\
    &=\left\{\lambda\in D_1 \ \big| \ \lambda = -\zeta_{2,\omega} \text{ for some }\omega \in \Omega\right\}\\
    &= \left\{ -\zeta_{2,\omega} \ | \ \omega \in \Omega \right\}.
    \end{align*}
    Finally, by \lem{lem:diffcurve_leb0}, because $\zeta_{2}\in C^1(\Omega,D_{1})$ we have $\leb\left( \left\{ -\zeta_{2,\omega} \ | \ \omega \in \Omega \right\} \right)=0$ and $\mathcal{P}$ is a probe.
    \end{proof}
\begin{remark}
    We require $C^1$ regularity of coefficient functions, and not simply continuity, due to the existence of space filling curves \cite{zero_measure_c1curve}.
\end{remark}
\section{Prevalence for higher dimensional probability spaces}
\label{sec:nonprev}
{Thus far we have considered cocycles over the circle. A natural question to ask is: {what happens when a higher dimensional $\Omega$ is considered?} We now address this query under the assumptions of \thrm{thrm:prev_thrm}, but taking $\Omega = B_\delta(0)\subset \mathbb{R}^n$.
We prove a negative result for $\dim (\Omega) = 2$. 
A similar result holds when
$\Omega = B_\delta(0)\subset \mathbb{R}^n$ ($n\geq 2$). The only difference there is that one cannot assume that $\partial_\omega \zeta_{2,\omega}$ is invertible as in the proof of \thrm{thrm:noprobe}. This issue can be avoided by considering the restriction to $B_\delta(0)\cap (\mathbb{R}^2 \times \{(0, \dots, 0)\})$ and following the two-dimensional argument.
\begin{theorem}\label{thrm:noprobe}
    Fix {$0<\delta<1$}, let $\Omega=B_\delta(0)\subset \mathbb{R}^2$  and suppose that { $\mathbb P$ has full support}. If
    $$\mathcal{E} = \left\{ {\mathcal{T}}_{\zeta} \in \accentset{\circ}{\mathfrak{B}}_{2}^{m}(\Omega) \ \big| \ \zeta \in C^1(\Omega,\{0\}\times D_{1}) \right\},$$
    then there does not exist a probe for $\mathcal{E}^S = \mathcal{E} \cap {\mathfrak{B}}_{2}^{m,S}(\Omega)$.
    \end{theorem}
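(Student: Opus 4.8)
The plan is to show directly that \emph{no} finite-dimensional subspace of $\mathcal{E}$ can be a probe for $\mathcal{E}^S$. Fix an arbitrary nonzero finite-dimensional subspace $\mathcal{P}\subseteq\mathcal{E}$. Transporting through the diffeomorphism $\Phi$ of \prop{prop:PhiRhat(z)}, I would identify $\mathcal{P}$ with a complex $m$-dimensional subspace $\operatorname{span}_{\mathbb C}\{g_1,\dots,g_m\}\subseteq C^1(\Omega,\mathbb C)$, the $g_j$ being $\Phi$ applied to the second coordinate functions of a basis of $\mathcal{P}$, so that $c=(c_1,\dots,c_m)\in\mathbb C^m$ parametrises the element $\mathcal{T}_w\in\mathcal{P}$ with $\Phi(w_2)=\sum_j c_j g_j$. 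For any $\mathcal{T}_\zeta\in\mathcal{E}$ with $\Phi(\zeta_2)=h\in C^1(\Omega,\mathbb C)$, \dfnn{def:addition_of_fbp} shows that $\mathcal{T}_\zeta+\mathcal{T}_w$ has second coordinate $\Phi^{-1}\!\bigl(h+\sum_j c_j g_j\bigr)$, and hence, by \cor{cor:instab_cond} and the full-support hypothesis on $\mathbb P$, $\mathcal{T}_\zeta+\mathcal{T}_w$ is unstable precisely when $c$ lies in
$$B_h:=\bigl\{\,c\in\mathbb C^m \ \big| \ G(\omega,c)=0\text{ for some }\omega\in\Omega\,\bigr\},\qquad G(\omega,c):=h(\omega)+\sum_{j=1}^m c_j\,g_j(\omega).$$
The reduction is then: if $h$ can be chosen so that $B_h$ has positive Lebesgue measure in $\mathbb C^m$, then, since $\mathcal{E}$ is a vector space, $x:=-\mathcal{T}_\zeta\in\mathcal{E}$, and for every Borel $U\subseteq\mathcal{E}^S$ one has $(U+x)\cap\mathcal{P}\subseteq\{\mathcal{T}_w\in\mathcal{P}:\mathcal{T}_w+\mathcal{T}_\zeta\text{ is stable}\}$; the complement of the latter in $\mathcal{P}$ is exactly $B_h$ under $c\leftrightarrow\mathcal{T}_w$ (recall every element of $\accentset{\circ}{\mathfrak{B}}_{2}^{m}(\Omega)$ is admissible, hence stable or unstable, by \cor{cor:BaeqB} and \thrm{the:Stability of Lyapunov Spectrum}), so $U+x$ does not have full measure on $\mathcal{P}$ and $\mathcal{P}$ is not a probe.

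The construction of $h$ is the crux, and it is here that $\dim\Omega=2=\dim_{\mathbb R}D_1$ is decisive. In contrast with \thrm{thrm:prev_thrm}, the zero set $G^{-1}(0)\subseteq\Omega\times\mathbb C^m$ now lives in a space of real dimension $2m+2$ and is cut out by the $\mathbb C$-valued (real codimension $2$) equation $G=0$; where $0$ is a regular value it is a $C^1$ submanifold of dimension $2m=\dim_{\mathbb R}\mathbb C^m$, and its projection $B_h=\pi_{\mathbb C^m}(G^{-1}(0))$ can carry positive measure. To realise this, I would pick $\omega_1\in\Omega$ with $(g_1(\omega_1),\dots,g_m(\omega_1))\neq 0$ (possible since the $g_j$ are not all identically zero) and set $h(\omega):=\omega-\omega_1$ under the identification $\Omega=B_\delta(0)\subseteq\mathbb C$; this is affine, so $\zeta_2=\Phi^{-1}(h)\in C^1(\Omega,D_1)$ and $\mathcal{T}_\zeta\in\mathcal{E}$. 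At $(\omega_1,0)$: $G(\omega_1,0)=0$; the partial $\partial_c G(\omega_1,0)\delta c=\sum_j\delta c_j\,g_j(\omega_1)$ is a nonzero $\mathbb C$-linear functional, so $DG(\omega_1,0)$ is surjective and $0$ is a regular value of $G$ near $(\omega_1,0)$; and because the $c$-argument is $0$, $\partial_\omega G(\omega_1,0)=\partial_\omega h(\omega_1)=\mathrm{id}_{\mathbb R^2}$, which is invertible (equivalently, $\partial_\omega\zeta_{2}$ is invertible at $\omega_1$, since $\Phi$ is a diffeomorphism). Hence $T_{(\omega_1,0)}G^{-1}(0)=\ker DG(\omega_1,0)$ is the graph $\{(\delta\omega,\delta c):\delta\omega=-\partial_c G(\omega_1,0)\delta c\}$, on which $\pi_{\mathbb C^m}$ restricts to a linear isomorphism onto $\mathbb C^m$; by the inverse function theorem $\pi_{\mathbb C^m}$ is a local diffeomorphism of $G^{-1}(0)$ near $(\omega_1,0)$, so $B_h=\pi_{\mathbb C^m}(G^{-1}(0))$ contains an open neighbourhood of $0\in\mathbb C^m$ and in particular has positive Lebesgue measure. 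With the previous paragraph, this shows $\mathcal{P}$ is not a probe; since $\mathcal{P}$ was arbitrary, no probe for $\mathcal{E}^S$ exists.

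The step I expect to require the most care is the transversality bookkeeping rather than any single estimate: confirming that $G^{-1}(0)$ is genuinely a $C^1$ submanifold near $(\omega_1,0)$ and that $\pi_{\mathbb C^m}$ is an immersion there, alongside the precise unwinding of ``$\mathcal{P}$ is a probe'' (with its quantifiers over Borel sets $U$ and over translates $x$) into the measure statement on $B_h$, and the check that $\mathcal{E}$ really is a completely metrisable topological vector space so that \dfnn{def:probe} applies. If one instead takes the ambient space to be all of $\accentset{\circ}{\mathfrak{B}}_{2}^{m}(\Omega)$ and $\mathcal{P}\not\subseteq\mathcal{E}$, the same $\mathcal{T}_\zeta$ still works a fortiori, since $\{\mathcal{T}_w\in\mathcal{P}:\mathcal{T}_w+\mathcal{T}_\zeta\in\mathcal{E}\}=\mathcal{P}\cap\mathcal{E}$ is then a proper subspace of $\mathcal{P}$, already of measure zero. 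Finally, as the remark preceding the theorem notes, the case $\Omega=B_\delta(0)\subseteq\mathbb R^n$ with $n\geq2$ follows by running the same argument on the $2$-dimensional slice $B_\delta(0)\cap(\mathbb R^2\times\{(0,\dots,0)\})$, the only place $\dim\Omega=2$ is used being the invertibility of $\partial_\omega h$ (equivalently $\partial_\omega\zeta_2$) at $\omega_1$.
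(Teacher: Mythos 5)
Your proposal is correct and follows essentially the same route as the paper: given an arbitrary finite-dimensional subspace, you choose a translate whose coefficient function vanishes at some $\omega_1$ with invertible $\omega$-derivative there (the paper's footnote example $\zeta_{2,\omega}=\omega$ is exactly your $h(\omega)=\omega-\omega_1$ up to $\Phi$), and apply the implicit function theorem, exploiting $\dim\Omega=2=\dim_{\mathbb R}D_1$, to conclude that the unstable parameter set contains an open neighbourhood and hence has positive measure. Your extra bookkeeping --- the explicit unwinding of the probe definition into the statement about $B_h$ and the regular-value discussion --- is a more careful rendering of the same argument rather than a different one.
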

    \begin{proof}
        For each $i=1,\dots,d$ let $\varphi_i\in C^1(\Omega, D_1)$ and set $v_{i,\omega}=(0,\varphi_{i,\omega})\in D_1^2$. Take any $d$-dimensional subspace of $\mathcal{E}$ given by $\mathcal{P}=\{ \sum_{i=1}^d \mathcal{T}_{\lambda_i v_{i,\omega}} \ | \ \lambda_i \in \mathbb{C} \ \mathrm{for \ each} \ i=1,\dots,d\}$. We claim that this cannot be a probe for $\mathcal{E}^S$. Indeed,  {relying on Corollary~\ref{cor:instab_cond}} and recalling \dfnn{def:addition_of_fbp}, we aim to show that {there exists $\zeta_2\in C^1(\Omega, D_1)$ such that} the set of solutions $\lambda = (\lambda_1,\cdots,\lambda_d)$ to
        \begin{equation}
            \min_{\omega \in \Omega}\left|\Phi^{-1}\left(\Phi(\zeta_{2,\omega})+\sum_{i=1}^d\Phi( \lambda_i\varphi_{i,\omega})\right)\right|=0
            \label{eqn:not_probe}
        \end{equation}
        has positive $2d$-dimensional Lebesgue measure. Define
        $$f(\lambda,\omega) := \Phi(\zeta_{2,\omega})+\sum_{i=1}^d\Phi( \lambda_i\varphi_{i,\omega}).$$
        Since $\Phi$ is a diffeomorphism, $\Phi(0)=0$  and $\zeta_2,\varphi_i$ are continuously differentiable for each $i=1,\dots, d$, solutions to \eqref{eqn:not_probe} are identical to the set $\lambda$ satisfying $f(\lambda,\omega)=0$  {for some $\omega\in\Omega$}. Let $\zeta_2\in C^1(\Omega, D_1)$ be such that  {there exists $\omega_0\in\Omega$  where} $\zeta_{2,\omega_0}=0$ and $\partial_\omega \zeta_{2,\omega_0}$ is invertible\footnote{{For example, one could let $\omega_0=\textbf{0} \in \mathbb{R}^2$ and $\zeta_{2,\omega}=\omega$. Then $\zeta_{0,\omega_0}=0$ and $\partial_\omega\zeta_{2,\omega_0}=Id$.}}. Then $f(0,\omega_0)=0$ and by the chain rule
        \begin{align*}
            \partial_\omega f(0,\omega_0)&=D\Phi(\zeta_{2,\omega_0})\partial_\omega \zeta_{2,\omega_0}\\
            &=\partial_\omega \zeta_{2,\omega_0}
        \end{align*}
        is invertible, where $D\Phi(0)=\mathrm{Id}$ by \eqref{eqn:DPhi}. Therefore, by the Implicit Function Theorem, there exists an open set $U\subset D_1^d$  containing zero and a function $g\in C^1(U,\Omega)$ such that $f(\lambda,g(\lambda))=0$ (here $D_1^d$ denotes the $d$-dimensional unit disk). Since \eqref{eqn:not_probe} is satisfied for all $\lambda \in U$, $\mathcal{P}$ is not a probe.
    \end{proof}
\begin{remark}
    We point out that although the existence of a probe is only a sufficient condition for a set to be prevalent, \thrm{thrm:noprobe} provides us with strong evidence that stability is not prevalent in $\mathcal{E}$ when $\Omega = B_\delta(0)\subset \mathbb{R}^n$.
\end{remark}
\begin{remark}
    {In the space of monic quadratic Blaschke product cocycles fixing the origin, the stability criteria in \prop{prop:stab_instab} is reduced to studying the finite-dimensional mapping $\omega\mapsto \zeta_{2,\omega}$ where $\zeta_{2}\in C^1(\Omega,D_1)$. The results of the last two sections may be roughly summarised as follows. In the case that $\dim(\Omega)=1$, the range of $\omega\mapsto \zeta_{2,\omega}$ generically avoids the origin under perturbations. However, when $\dim(\Omega)\geq 2$, the intersection of the image of $\omega\mapsto \zeta_{2,\omega}$ and the origin is non-empty under a set of perturbations with positive Lebesgue measure. In this sense, when $\dim(\Omega) = 1$ one can regard \textit{instability of Lyapunov spectrum} as a codimension 1 phenomenon, generically only occuring along a discrete set of points in a one-parameter family of cocycles. When
    $\dim(\Omega) >1$,  instabilities of the Lyapunov spectrum can occur robustly.}
\end{remark}
\begin{remark}
    It is plausible that the arguments made in the proofs of \thrm{thrm:prev_thrm} and \thrm{thrm:noprobe} could be used to show that a probe does not exist in the setting of \thrm{thrm:prev_thrm} with the relaxed assumption that coefficient functions are continuous instead of continuously differentiable,
    as the image of a continuous function (for example a space filling curve) could accumulate and contain a full neighbourhood about the origin.
\end{remark}

    }

\section{Stability for almost every Blaschke product cocycle fixing the origin}\label{sec:aefbcstab}
Thus far we have considered prevalence of stability for monic quadratic Blaschke product cocycles that fix the origin. In this section, we divert our focus towards studying general cocycles that fix the origin. Here the theory of prevalence does not apply since $\accentset{\circ}{\mathfrak B}_n(\Omega)$ lacks a natural linear structure. We instead introduce a one parameter complex family of perturbations for Blaschke product cocycles which play the role of a probe. This allows us to show that
almost every perturbation of any fixed smooth Blaschke product cocycle fixing the origin is stable.
Section~\ref{ss:degreen} addresses the case of cocycles of fixed degree. In  Section~\ref{ss:genDeg}, we generalise our results to the case where the degree is $\omega$-dependent.
\\ \\ \noindent
Although monic Blaschke product cocycles are determined by their zeros $\zeta_{i,\omega} \in D_1^n$, the mapping $\zeta \mapsto \mathcal{T}_{\zeta}$ is not one-to-one for $n>1$. This follows from the fact that the resulting cocycle is invariant under permutations of the coordinates of $\zeta$. In fact, the space of Blaschke products of degree $n$ satisfies $\mathfrak{B}_n\cong S^1 \times D_1^n / \sim$, where $(\rho,\zeta)\sim (\rho', \zeta')$ if and only if $\rho=\rho'$ and there exists a coordinate permutation $\pi$ such that $\zeta=\pi(\zeta')$. This equivalence relation is discussed in further detail in \cite{PropertiesaceFBP}. This observation means that the linear structure defined in \dfnn{def:addition_of_fbp} cannot be extended to cocycles with degree $n>2$. This is illustrated through the following example.
\begin{example}
    Take $\mathcal{T}_{\zeta},\mathcal{T}_{\zeta^\prime}\in \accentset{\circ}{\mathfrak{B}}_3^m(\Omega)$ where $\zeta = (0,\zeta_{2},\zeta_{3})$ and $\zeta^\prime = (0,\zeta_{3},\zeta_{2})$, with $\zeta_2\neq\zeta_3$. Recall that $\mathcal{T}_{\zeta}\in \accentset{\circ}{\mathfrak{B}}_3^m(\Omega)$ satisfies $\rho(\omega)=1$ for every $\omega\in\Omega$ and thus $(\rho,\zeta) \sim (\rho^\prime,\zeta^\prime)$. If one were to naturally extend \dfnn{def:addition_of_fbp} to monic cubic Blaschke product cocycles fixing the origin by defining $ \mathcal{T}_{\xi}+\mathcal{T}_{\chi}:={\mathcal{T}}_{\Phi_3^{-1}(\Phi_3(\xi)+\Phi_3(\chi))}$ for $\mathcal{T}_{\xi},\mathcal{T}_{\chi}\in \accentset{\circ}{\mathfrak{B}}_3^m(\Omega)$, then  although $(\rho,\zeta) \sim (\rho^\prime,\zeta^\prime)$, $\mathcal{T}_{\zeta}+\mathcal{T}_{\zeta^\prime}$ is not equivalent to $\mathcal{T}_{\zeta}+\mathcal{T}_{\zeta}$ since $\Phi_3^{-1}(\Phi_3(\zeta)+\Phi_3(\zeta^\prime))\neq \Phi_3^{-1}(\Phi_3(\zeta)+\Phi_3(\zeta))$.
\end{example}

\subsection{Stability for almost every smooth cocycle of fixed degree}
\label{ss:degreen}
As opposed to addressing stability of cocycles in $\accentset{\circ}{\mathfrak{B}}_n(\Omega)$ from a prevalence standpoint, we instead introduce a perturbative method similar to that of a probe which allows us to obtain a stability result for ``almost every'' element of $\accentset{\circ}{\mathfrak{B}}_n(\Omega)$.
\begin{proposition}\label{prop:pert}
Let ${\mathcal{T}}_{(\rho,\zeta)}\in\accentset{\circ}{\mathfrak{B}}_{n}(\Omega)$. For each $\lambda\in D_1$, let ${\mathcal{T}}_{(\rho,\zeta)}^\lambda$
be the cocycle parametrised by $\rho$ and
\begin{equation}\label{eq:zlambda}
\zeta_{i,\omega}^\lambda:=\begin{cases}0, \ \ \ \ \ &i=1 \\
\Phi^{-1}(\Phi(\zeta_{i,\omega})+\Phi(\lambda)), &i=2,\dots,n.
\end{cases}
\end{equation}
Then, ${\mathcal{T}}_{(\rho,\zeta)}^\lambda \in\accentset{\circ}{\mathfrak{B}}_{n}(\Omega)$.
\end{proposition}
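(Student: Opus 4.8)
The plan is to verify directly that $\mathcal{T}^\lambda_{(\rho,\zeta)}$ satisfies the defining conditions of $\accentset{\circ}{\mathfrak{B}}_{n}(\Omega)$, namely that it is a Blaschke product cocycle of degree $n$ fixing the origin (admissibility then being automatic by \cor{cor:BaeqB}). First I would note that, by construction, $\zeta^\lambda_{1,\omega}=0$ for every $\omega\in\Omega$, so the origin-fixing condition of \dfnn{def:fbp_fixorig} is immediate. It remains only to check that $\zeta^\lambda:\Omega\to\{0\}\times D_1^{n-1}$ is a well-defined measurable function, i.e. that $\zeta^\lambda_{i,\omega}\in D_1$ for each $i=2,\dots,n$, and that $\rho$ is unchanged and hence still measurable with values in $S^1$.

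The key step is therefore showing $\Phi^{-1}(\Phi(\zeta_{i,\omega})+\Phi(\lambda))\in D_1$. This follows from \prop{prop:PhiRhat(z)}: since $\Phi:D_1\to\mathbb{C}$ is a diffeomorphism, $\Phi(\zeta_{i,\omega})$ and $\Phi(\lambda)$ are well-defined points of $\mathbb{C}$, their sum lies in $\mathbb{C}$, and $\Phi^{-1}$ maps all of $\mathbb{C}$ back into $D_1$. Measurability of $\omega\mapsto\zeta^\lambda_{i,\omega}$ follows because $\Phi$ and $\Phi^{-1}$ are continuous (indeed smooth) and $\omega\mapsto\zeta_{i,\omega}$ is measurable, so the composition $\omega\mapsto\Phi^{-1}(\Phi(\zeta_{i,\omega})+\Phi(\lambda))$ is measurable as a composition of a measurable map with a continuous one. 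One should also record that the perturbed cocycle retains the standing convention of \rem{rmk:fbp_fix0} that the first zero vanishes, which is exactly the $i=1$ case of \eqref{eq:zlambda}.

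Finally, invoking \cor{cor:BaeqB}, every element of $\accentset{\circ}{\mathfrak{B}}_{n}(\Omega)$ is admissible for every $R<1$, so once we know $\mathcal{T}^\lambda_{(\rho,\zeta)}$ lies in $\accentset{\circ}{\mathfrak{B}}_{n}(\Omega)$ there is nothing further to verify; in particular \thrm{the:Lyapunov Spectrum of a Blaschke product cocycle} and \thrm{the:Stability of Lyapunov Spectrum} apply to it. I do not anticipate any genuine obstacle here: the statement is essentially a sanity check that the perturbation \eqref{eq:zlambda}, defined via the chart $\Phi$ introduced for the $n=2$ linear structure, stays inside the relevant space. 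The only point requiring a little care is the surjectivity of $\Phi^{-1}$ onto $D_1$ (equivalently, that $\Phi$ maps onto all of $\mathbb{C}$), which is precisely the content of \prop{prop:PhiRhat(z)} and which guarantees that the summed point $\Phi(\zeta_{i,\omega})+\Phi(\lambda)$, however large, is always pulled back into the open unit disc.
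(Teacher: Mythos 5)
Your verification that each $\zeta^\lambda_{i,\omega}$ lands in $D_1$ (via the surjectivity of $\Phi$ onto $\mathbb{C}$ and the fact that $|\Phi^{-1}(w)|=|w|/\sqrt{1+|w|^2}<1$), that $\zeta^\lambda_{1,\omega}=0$, that measurability is preserved, and that admissibility is automatic by \cor{cor:BaeqB}, is all correct and coincides with the first sentence of the paper's proof. However, you have missed the step that the paper regards as the actual content of the proposition: showing that ${\mathcal{T}}^\lambda_{(\rho,\zeta)}$ is \emph{well defined} as a cocycle, i.e.\ that the coordinate-wise formula \eqref{eq:zlambda} does not depend on the choice of representative $\zeta$ for the cocycle ${\mathcal{T}}_{(\rho,\zeta)}$.

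The point is that the parametrisation $\zeta\mapsto\mathcal{T}_\zeta$ is not injective for $n>1$: a Blaschke product cocycle is an equivalence class under permutations of the coordinates of $\zeta$ (see \rem{rmk:fbp_fix0} and the discussion at the start of Section~\ref{sec:aefbcstab}), and the example immediately preceding Section~\ref{ss:degreen} exhibits a coordinate-wise operation of exactly this flavour that fails to descend to the quotient when $n\geq 3$. So one must check that if $(\zeta_1,\dots,\zeta_n)\sim(\zeta'_1,\dots,\zeta'_n)$ then $(\zeta_1^\lambda,\dots,\zeta_n^\lambda)\sim(\zeta_1^{\prime\lambda},\dots,\zeta_n^{\prime\lambda})$; this holds here precisely because the \emph{same} $\lambda$ is added (through the chart $\Phi$) to every coordinate $i=2,\dots,n$, so the operation commutes with coordinate permutations. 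Without this observation the proposition as stated does not define a map on $\accentset{\circ}{\mathfrak{B}}_{n}(\Omega)$ at all, so you should add this one-line equivariance check to close the argument.
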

\begin{proof}
For every $\lambda\in D_1$, $\zeta_{1,\omega}=0$ and for every $1\leq i\leq n$, $\zeta_{i,\omega}^\lambda\in D_1$. Thus, ${\mathcal{T}}_{(\rho,\zeta)}^\lambda \in\accentset{\circ}{\mathfrak{B}}_{n}(\Omega)$. To conclude, it is enough to show that the cocycle $\mathcal{T}_{(\rho,\zeta)}^\lambda$ is well defined. That is, that   $\mathcal{T}_{(\rho,\zeta)}^\lambda$ is invariant under permutations of the coordinates of $\zeta$. This follows from the fact that if
$(\zeta_{1}, \dots, \zeta_{n})\sim (\zeta'_{1}, \dots, \zeta'_{n})$ then $(\zeta_{1}^\lambda, \dots, \zeta_{n}^\lambda)\sim (\zeta_{1}^{'  \lambda}, \dots, \zeta_{n}^{' \lambda})$ for every $\lambda\in D_1$.
\end{proof}
\begin{remark}
Note that $\lambda \in D_1$ in fact describes a family of perturbations in $\mathbb{C}$ through \eqref{eq:zlambda} since $\Phi(\lambda)\in\mathbb{C}$.
\end{remark}
\begin{theorem}\label{thm:aePertDegn}
    Let $\Omega = S^1$ and suppose that ${\mathcal{T}}_{(\rho,\zeta)}\in\accentset{\circ}{\mathfrak{B}}_{n}(\Omega)$, where $\zeta:\Omega \to \{0\}\times D_1^{n-1}$ is continuously differentiable, {and $\mathbb P$ has full support}. Then for almost every $\lambda\in D_1$, the cocycle
    ${\mathcal{T}}_{(\rho,\zeta)}^\lambda$ is stable. That is,
    $$\leb \left(\left\{ \lambda\in D_1 \ \big| \ \mathcal{T}_{(\rho,\zeta)}^\lambda \in \accentset{\circ}{\mathfrak{B}}_{n}^{U}(\Omega) \right\}\right)=0 .$$
    \end{theorem}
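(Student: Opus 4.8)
The plan is to mimic the proof of \thrm{thrm:prev_thrm}, with the family $\{\mathcal{T}_{(\rho,\zeta)}^\lambda\}_{\lambda\in D_1}$ from \prop{prop:pert} playing the role of the probe. First I would fix $\lambda\in D_1$ and note that $\mathcal{T}_{(\rho,\zeta)}^\lambda\in\accentset{\circ}{\mathfrak{B}}_{n}(\Omega)$ by \prop{prop:pert}, and that its coefficient functions $\omega\mapsto\zeta_{i,\omega}^\lambda$ are continuously differentiable: each is the composition of the $C^1$ map $\omega\mapsto\zeta_{i,\omega}$ with the diffeomorphism $w\mapsto\Phi^{-1}(w+\Phi(\lambda))$. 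Hence \cor{cor:instab_cond} applies to $\mathcal{T}_{(\rho,\zeta)}^\lambda$ (using that $\Omega=S^1$ is compact and $\mathbb{P}$ has full support, so that the $\essinf$ in \prop{prop:stab_instab} is a minimum and the vanishing of $\prod_{i=2}^n\zeta_{i,\omega}^\lambda$ for some $\omega$ reduces to the vanishing of a single factor): $\mathcal{T}_{(\rho,\zeta)}^\lambda$ is unstable if and only if there exist $i\in\{2,\dots,n\}$ and $\omega\in\Omega$ with $\zeta_{i,\omega}^\lambda=0$.

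Next I would solve this vanishing condition for $\lambda$. Since $\Phi$ is odd ($\Phi(-z)=-\Phi(z)$, immediate from \eqref{eq:defphi}), so is $\Phi^{-1}$, and therefore
$$\zeta_{i,\omega}^\lambda=\Phi^{-1}\big(\Phi(\zeta_{i,\omega})+\Phi(\lambda)\big)=0\iff\Phi(\lambda)=-\Phi(\zeta_{i,\omega})\iff\lambda=-\zeta_{i,\omega}.$$
Consequently the exceptional set in the statement is precisely
$$\left\{\lambda\in D_1 \ \big| \ \mathcal{T}_{(\rho,\zeta)}^\lambda\in\accentset{\circ}{\mathfrak{B}}_{n}^{U}(\Omega)\right\}=\bigcup_{i=2}^{n}\left\{-\zeta_{i,\omega} \ \big| \ \omega\in\Omega\right\}.$$

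To finish, each set $\{-\zeta_{i,\omega}\mid\omega\in\Omega\}$ is the image of the $C^1$ map $\Omega=S^1\to D_1\subset\mathbb{C}\cong\mathbb{R}^2$, $\omega\mapsto-\zeta_{i,\omega}$. Since $\dim(S^1)=1<2=\dim(\mathbb{R}^2)$, \lem{lem:diffcurve_leb0} gives $\leb\big(\{-\zeta_{i,\omega}\mid\omega\in\Omega\}\big)=0$; a finite union of Lebesgue-null sets is Lebesgue-null, which yields the claim.

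I expect no serious obstacle: the theorem is essentially a repackaging of \thrm{thrm:prev_thrm} once one verifies that the perturbed cocycle retains $C^1$ coefficients so that \cor{cor:instab_cond} is available. The only points warranting care are (i) the passage from the instability criterion $\essinf_{\omega}\big|\prod_{i=2}^n\zeta_{i,\omega}^\lambda\big|=0$ of \prop{prop:stab_instab} to the existence of a single $i$ and $\omega$ with $\zeta_{i,\omega}^\lambda=0$, which is exactly the content of \cor{cor:instab_cond} and relies on full support of $\mathbb{P}$ together with compactness of $S^1$; and (ii) the oddness of $\Phi$ used to invert the relation \eqref{eq:zlambda}. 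Both are routine.
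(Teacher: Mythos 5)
Your proof is correct and follows essentially the same route as the paper: the paper derives this theorem as the constant-degree special case of Theorem~\ref{thm:aePert}, whose proof likewise reduces instability to the condition $\lambda=-\zeta_{i,\omega}$ via the oddness of $\Phi$ and then applies Lemma~\ref{lem:diffcurve_leb0} to the finitely many $C^1$ images $\{-\zeta_{i,\omega}\mid\omega\in\Omega\}$. Your explicit verification that the perturbed coefficients remain $C^1$ (so that Corollary~\ref{cor:instab_cond} applies) is a worthwhile point the paper leaves implicit, but it does not change the argument.
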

\noindent
 Theorem~\ref{thm:aePertDegn} is a special case of Theorem~\ref{thm:aePert}, which will be proved in the next section.

\subsection{Stability for almost every smooth cocycle of varying degree}
\label{ss:genDeg}
The perturbations introduced in \prop{prop:pert} can be generalised and applied to the space of random Blaschke products that fix the origin, $\accentset{\circ}{\mathfrak{B}}(\Omega)$, as in \dfnn{def:fbp_fixorig}. Such perturbations can be defined as follows. For $\lambda\in D_1$, and ${\mathcal{T}}_{(n,\rho,\zeta)}\in \accentset{\circ}{\mathfrak B}(\Omega)$,
let
    $${\mathcal{T}}_{(n,\rho,\zeta)}^\lambda(\omega, z)= \rho_\omega z \prod_{i=2}^{n_\omega}\frac{z-\zeta^\lambda_{i,\omega}}{1-\bar{\zeta}^\lambda_{i,\omega}z},$$
    be the perturbed cocycle where $\zeta^\lambda_{i,\omega}$ is as in \eqref{eq:zlambda}. We note that the $\omega$ dependence of $n$ does not influence the results of \prop{prop:est rT} and \prop{prop:admis_cond}, so \cor{cor:BaeqB} tells us that all elements of $\accentset{\circ}{\mathfrak B}(\Omega)$ are admissible and hence \thrm{the:Stability of Lyapunov Spectrum} applies to elements of $\accentset{\circ}{\mathfrak B}(\Omega)$. Further, the degree of the cocycle in \prop{prop:stab_instab} is not essential. Thus, the stability and instability conditions for elements of $\accentset{\circ}{\mathfrak{B}}(\Omega)$ can be determined though those of $\accentset{\circ}{\mathfrak{B}}_n(\Omega)$ by replacing $n$ with $n_\omega$.
    \begin{proposition}[Stability conditions for ${\mathcal{T}}_{(n,\rho,\zeta)}\in \accentset{\circ}{\mathfrak B}(\Omega)$]
Suppose that ${\mathcal{T}}_{(n,\rho,\zeta)} \in \accentset{\circ}{\mathfrak{B}}_{}(\Omega)$. Then $\mathcal{T}_{(n,\rho,\zeta)}$ is:
\begin{enumerate}
    \item Stable if
    $$\essinf_{\omega\in\Omega}\left|\prod_{i= 2}^{n_\omega}\zeta_{i,
    \omega}\right|>0.$$
    \item Unstable if
    $$\essinf_{\omega\in\Omega}\left|\prod_{i= 2}^{n_\omega}\zeta_{i,
    \omega}\right|=0.$$
\end{enumerate}
\label{prop:stab_instab2}
\end{proposition}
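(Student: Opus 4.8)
The plan is to reduce everything to the fixed-degree statement \prop{prop:stab_instab}, since the only feature of that proof that could conceivably fail here is the presence of a constant $n$, and, as observed in the paragraph preceding the statement, neither \prop{prop:est rT} nor \prop{prop:admis_cond} uses constancy of $n$. First I would invoke the ($\omega$-dependent degree) extension of \cor{cor:BaeqB} to conclude that every $\mathcal{T}_{(n,\rho,\zeta)}\in\accentset{\circ}{\mathfrak B}(\Omega)$ is admissible, so that \thrm{the:Lyapunov Spectrum of a Blaschke product cocycle} and \thrm{the:Stability of Lyapunov Spectrum} are available; the latter tells us that the cocycle is stable exactly when $\essinf_{\omega\in\Omega}|T'_\omega(x_\omega)|>0$ and unstable when it equals $0$. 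As in the degree-$n$ case, the random fixed point supplied by \thrm{the:Lyapunov Spectrum of a Blaschke product cocycle}(a) is $x_\omega=0$ for every $\omega$: since each $T_\omega$ fixes the origin, $T^{(n)}_{\sigma^{-n}\omega}(0)=0$ for all $n$, and the convergence in part (a) then pins down $x_\omega=0$.

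Next I would carry out the derivative computation verbatim from \prop{prop:stab_instab}, replacing the constant upper limit $n$ by $n_\omega$ throughout:
$$\rho_\omega^{-1}T'_\omega(z)=\left(\prod_{i=2}^{n_\omega}\frac{z-\zeta_{i,\omega}}{1-\bar\zeta_{i,\omega}z}\right)\left(1+z\sum_{i=2}^{n_\omega}\frac{1-|\zeta_{i,\omega}|^2}{(z-\zeta_{i,\omega})(1-\bar\zeta_{i,\omega}z)}\right).$$
Evaluating at $z=x_\omega=0$, the second factor collapses to $1$, every denominator collapses to $1$, and $|\rho_\omega|=1$, so $|T'_\omega(0)|=\bigl|\prod_{i=2}^{n_\omega}\zeta_{i,\omega}\bigr|$. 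Feeding this identity into the criterion of \thrm{the:Stability of Lyapunov Spectrum} (and the Corollary immediately following it) yields both assertions at once.

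The one bookkeeping point I would not skip is that $\omega\mapsto\prod_{i=2}^{n_\omega}\zeta_{i,\omega}$ is measurable, so that the $\essinf$ is meaningful: $\Omega$ is the countable disjoint union of the measurable sets $\Omega_m=\{\omega:n_\omega=m\}$, and on each $\Omega_m$ the map $\zeta$ is measurable into $D_1^m$, whence the finite product $\prod_{i=2}^{m}\zeta_{i,\omega}$ is measurable there. I do not anticipate a real obstacle: the content is wholly inherited from the constant-degree case. The only things to keep an eye on are that the product $\prod_{i=2}^{n_\omega}$ is never empty (it is not, since $n_\omega\ge 2$) and that the resulting formula and conclusion are genuinely uniform in $\omega$ even though the degree varies.
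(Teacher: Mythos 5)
Your proposal is correct and matches the paper's treatment: the paper gives no separate proof for this proposition, instead observing (in the paragraph preceding it) that the admissibility results and the derivative computation of Proposition~\ref{prop:stab_instab} are insensitive to the $\omega$-dependence of the degree, so the fixed-degree argument carries over with $n$ replaced by $n_\omega$ — exactly the reduction you carry out. Your added measurability check via the decomposition $\Omega=\bigcup_m\Omega_m$ is a harmless (and reasonable) extra detail not present in the paper.
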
\noindent
Those cocycles in $\accentset{\circ}{\mathfrak B}(\Omega)$ satisfying the stability and instability condition in \prop{prop:stab_instab2} belong to the spaces
$\accentset{\circ}{\mathfrak{B}}_{}^{S}(\Omega)$ and $\accentset{\circ}{\mathfrak{B}}_{}^{U}(\Omega)$, respectively.

\begin{theorem}\label{thm:aePert}
    Let $\Omega = S^1$ and suppose {that $\mathbb P$ has full support}. For each $m\geq2$, let the set $\Omega_m = \{\omega\in \Omega \ | \ n_\omega =m\}$ be a (possibly empty) collection of intervals. Suppose that ${\mathcal{T}}_{(n,\rho,\zeta)}\in\accentset{\circ}{\mathfrak{B}}(\Omega)$ is such that for each $m\geq 2$ for which $\Omega_m\neq \emptyset$, the restriction $\zeta|_{\Omega_m}: \Omega_m \to \{0\}\times D_1^{m-1}$ can be extended to a continuously differentiable function
    $\zeta_m: \overline{\Omega}_m \to \{0\}\times D_1^{m-1}$. Then, for almost every $\lambda\in D_1$, the cocycle
    ${\mathcal{T}}_{(n,\rho,\zeta)}^\lambda$ is stable. That is,
    $$\leb \left(\left\{ \lambda\in D_1 \ \big| \ \mathcal{T}_{(n,\rho,\zeta)}^\lambda \in \accentset{\circ}{\mathfrak{B}}^{U}(\Omega) \right\}\right)=0 .$$

    \begin{proof}
Using Proposition~\ref{prop:stab_instab2}, we aim to show that
    $$\leb\left(\left\{\lambda\in D_1 \ \big| \ \essinf_{\omega\in\Omega} \left|\prod_{i=2}^{n_\omega} \zeta_{i,\omega}^\lambda\right|=0\right\}\right)=0.$$
    For each $\omega \in \overline{\Omega}_m$, let $\zeta_{m. i,\omega}^\lambda$ be defined as in
    \eqref{eq:zlambda} with $\zeta$ replaced by $\zeta_m$. Since $\Phi$ is a diffeomorphism satisfying $\Phi(-z)=-\Phi(z)$ and {$\mathbb P$ has full support}, then
    \begin{align*}
        \left\{\lambda\in D_1 \ \big| \ \essinf_{\omega\in\Omega} \left|\prod_{i=2}^{n_\omega} \zeta_{i,\omega}^\lambda\right|=0\right\}
        &=\bigcup_{m=2}^\infty \Bigg\{\lambda\in D_1 \ \big| \ \prod_{i=2}^{m} \zeta_{m, i,\omega}^\lambda=0  \\
        &\quad \  \text{for some }\omega \in \overline{\Omega}_m\Bigg\}\\
        &=\bigcup_{m=2}^\infty\big\{\lambda\in D_1 \ \big| \ \zeta_{m, i,\omega}^\lambda=0  \ \mathrm{for \ some} \ \omega \in \overline{\Omega}_m,\\ &\quad \  i=2,\dots,m \big\}\\
         &=\bigcup_{m=2}^\infty\big\{\lambda\in D_1 \ \big| \ \lambda=-\zeta_{m, i,\omega} \ \mathrm{for \ some} \ \omega \in \overline{\Omega}_m,\\ &\quad \  i=2,\dots,m \big\}.
    \end{align*}
    Thus
    \begin{align*}
        \leb \left(\left\{\lambda\in D_1 \ \big| \ \essinf_{\omega\in\Omega} \left|\prod_{i=2}^{n_\omega} \zeta_{i,\omega}^\lambda\right|=0\right\}\right)&=\leb\left(\bigcup_{m=2}^\infty \bigcup_{i=2}^{m} \left\{  - \zeta_{m,i,\omega} \ | \ \omega\in\overline{\Omega}_m \right \}\right)\\
        &\leq \sum_{m=2}^\infty\sum_{i=2}^m \leb(\left\{  - \zeta_{m,i,\omega} \ | \ \omega\in\overline{\Omega}_m \right \})\\
        &=0,
    \end{align*}
    because $\zeta_{m,i}\in C^1(\overline{\Omega}_m,D_{1})$ for each $i=2,\dots,m$.
    \end{proof}
    \end{theorem}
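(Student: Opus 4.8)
The plan is to reduce instability of the perturbed cocycle $\mathcal{T}_{(n,\rho,\zeta)}^\lambda$ to a concrete vanishing condition on $\lambda$, and then show that the set of offending parameters is a countable union of Lebesgue-null sets. First I would recall from the discussion preceding Proposition~\ref{prop:stab_instab2} that, since the $\omega$-dependence of the degree plays no role in Proposition~\ref{prop:est rT}, Proposition~\ref{prop:admis_cond} and Corollary~\ref{cor:BaeqB}, the perturbed cocycle $\mathcal{T}_{(n,\rho,\zeta)}^\lambda$ is again an admissible element of $\accentset{\circ}{\mathfrak{B}}(\Omega)$, so Theorem~\ref{the:Stability of Lyapunov Spectrum} and hence Proposition~\ref{prop:stab_instab2} apply. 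By that proposition, $\mathcal{T}_{(n,\rho,\zeta)}^\lambda$ is unstable exactly when $\essinf_{\omega\in\Omega}\bigl|\prod_{i=2}^{n_\omega}\zeta_{i,\omega}^\lambda\bigr|=0$, so it suffices to bound $\leb$ of the set of $\lambda\in D_1$ for which this happens.

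Next I would convert the essential infimum into a genuine vanishing over closures. Because $\mathbb{P}$ has full support and each $\Omega_m=\{\omega : n_\omega=m\}$ is a collection of intervals, on which $\zeta|_{\Omega_m}$ admits a $C^1$ extension $\zeta_m:\overline{\Omega}_m\to\{0\}\times D_1^{m-1}$, the condition $\essinf_{\omega\in\Omega}\bigl|\prod_{i=2}^{n_\omega}\zeta_{i,\omega}^\lambda\bigr|=0$ is equivalent to: there is some $m\ge 2$ and some $\omega\in\overline{\Omega}_m$ with $\prod_{i=2}^{m}\zeta_{m,i,\omega}^\lambda=0$, where $\zeta_{m,i,\omega}^\lambda$ is given by \eqref{eq:zlambda} with $\zeta$ replaced by $\zeta_m$. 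Since a product vanishes iff a factor does, the ``bad'' set of $\lambda$ is $\bigcup_{m\ge 2}\bigcup_{i=2}^{m}\{\lambda\in D_1 : \zeta_{m,i,\omega}^\lambda=0\text{ for some }\omega\in\overline{\Omega}_m\}$. Using that $\Phi$ is a diffeomorphism with $\Phi(0)=0$ and $\Phi(-z)=-\Phi(z)$, the equation $\Phi^{-1}\bigl(\Phi(\zeta_{m,i,\omega})+\Phi(\lambda)\bigr)=0$ simplifies to $\lambda=-\zeta_{m,i,\omega}$, so the bad set equals $\bigcup_{m\ge 2}\bigcup_{i=2}^{m}\{-\zeta_{m,i,\omega} : \omega\in\overline{\Omega}_m\}$.

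Finally I would estimate the measure by countable subadditivity. For each fixed $m$ and $i$ the map $\omega\mapsto -\zeta_{m,i,\omega}$ is $C^1$ on $\overline{\Omega}_m$, a one-dimensional subset of $S^1$, with values in $D_1\subset\mathbb{R}^2$; covering $\overline{\Omega}_m$ by (at most countably many) closed arcs and applying Lemma~\ref{lem:diffcurve_leb0} to each arc, whose dimension $1$ is strictly less than $2=\dim(D_1)$, gives $\leb\bigl(\{-\zeta_{m,i,\omega} : \omega\in\overline{\Omega}_m\}\bigr)=0$. Summing over the countably many pairs $(m,i)$ then yields $\leb\bigl(\{\lambda\in D_1 : \mathcal{T}_{(n,\rho,\zeta)}^\lambda\in\accentset{\circ}{\mathfrak{B}}^{U}(\Omega)\}\bigr)=0$, as claimed. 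The step I expect to require the most care is the reduction of $\essinf_{\omega\in\Omega}$ to vanishing at some $\omega\in\overline{\Omega}_m$ with the extension $\zeta_m$ in place: this is precisely where the hypotheses that $\mathbb{P}$ has full support, that each $\Omega_m$ is a union of intervals, and that $\zeta|_{\Omega_m}$ extends $C^1$-ly to $\overline{\Omega}_m$ are all genuinely needed (the last is what guarantees the limiting behaviour on boundary points of the intervals $\Omega_m$ is controlled and that Sard's theorem is applicable); everything downstream is routine.
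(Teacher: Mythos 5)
Your proposal is correct and follows essentially the same route as the paper: reduce instability to the vanishing condition of Proposition~\ref{prop:stab_instab2}, use full support of $\mathbb{P}$ together with the $C^1$ extensions $\zeta_m$ to rewrite the bad set as $\bigcup_{m}\bigcup_{i}\{-\zeta_{m,i,\omega} : \omega\in\overline{\Omega}_m\}$ via the oddness of $\Phi$, and then apply Sard's theorem (Lemma~\ref{lem:diffcurve_leb0}) with countable subadditivity. Your added remarks on why the perturbed cocycle remains admissible and on covering $\overline{\Omega}_m$ by arcs are harmless elaborations of points the paper treats implicitly.
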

\section*{Acknowledgments}
We thank the Mathematical Research Institute (MATRIX), where part of this research was performed. The authors acknowledge support from the Australian Research Council (DP220102216). JP acknowledges the Australian Government Research Training Program for financial support. We thank Zoe H for assisting us in generating the TikZ picture in \fig{fig:maptoC}, and the anonymous referees for their valuable comments and suggestions.
\footnotesize
\bibliographystyle{plain}
\bibliography{prevalence}

\begin{thebibliography}{10}

\bibitem{ArnoldRDS}
L.~Arnold.
\newblock {\em Random dynamical systems}.
\newblock Springer Monographs in Mathematics. Springer-Verlag, Berlin, 1998.

\bibitem{Baladi}
V.~Baladi.
\newblock {\em Positive transfer operators and decay of correlations}, volume~16 of {\em Advanced Series in Nonlinear Dynamics}.
\newblock World Scientific Publishing Co., Inc., River Edge, NJ, 2000.

\bibitem{MR3592677}
O.~F. Bandtlow, W.~Just, and J.~Slipantschuk.
\newblock Spectral structure of transfer operators for expanding circle maps.
\newblock {\em Ann. Inst. H. Poincar\'{e} C Anal. Non Lin\'{e}aire}, 34(1):31--43, 2017.

\bibitem{BC_gen}
M.~Bessa and G.~F. Carvalho.
\newblock The {L}yapunov exponents of generic skew-product compact semiflows.
\newblock {\em J. Evol. Equ.}, 19(2):387--409, 2019.

\bibitem{instab}
J.~Bochi and M.~Viana.
\newblock The {L}yapunov exponents of generic volume-preserving and symplectic maps.
\newblock {\em Ann. of Math. (2)}, 161(3):1423--1485, 2005.

\bibitem{crimmins}
H.~Crimmins.
\newblock Stability of hyperbolic {O}seledets splittings for quasi-compact operator cocycles.
\newblock {\em Discrete Contin. Dyn. Syst.}, 42(6):2795--2857, 2022.

\bibitem{DS_spechyp}
D.~Dragi\v{c}evi\'{c} and J.~Sedro.
\newblock Statistical stability and linear response for random hyperbolic dynamics.
\newblock {\em Ergodic Theory Dynam. Systems}, 43(2):515--544, 2023.

\bibitem{LasotaYorke}
G.~Froyland, C.~Gonz\'{a}lez-Tokman, and A.~Quas.
\newblock Stability and approximation of random invariant densities for {L}asota-{Y}orke map cocycles.
\newblock {\em Nonlinearity}, 27(4):647--660, 2014.

\bibitem{Hilbert}
G.~Froyland, C.~Gonz\'{a}lez-Tokman, and A.~Quas.
\newblock Hilbert space {L}yapunov exponent stability.
\newblock {\em Trans. Amer. Math. Soc.}, 372(4):2357--2388, 2019.

\bibitem{environment}
G.~Froyland, N.~Santitissadeekorn, and A.~Monahan.
\newblock Transport in time-dependent dynamical systems: finite-time coherent sets.
\newblock {\em Chaos}, 20(4):043116, 10, 2010.

\bibitem{PropertiesaceFBP}
S.~R. Garcia, J.~Mashreghi, and W.~T. Ross.
\newblock {\em Finite {B}laschke products and their connections}.
\newblock Springer, Cham, 2018.

\bibitem{PropertiesdFBP}
J.~B. Garnett.
\newblock {\em Bounded analytic functions}, volume 236 of {\em Graduate Texts in Mathematics}.
\newblock Springer, New York, first edition, 2007.

\bibitem{semiinvert}
C.~Gonz\'{a}lez-Tokman and A.~Quas.
\newblock A semi-invertible operator {O}seledets theorem.
\newblock {\em Ergodic Theory Dynam. Systems}, 34(4):1230--1272, 2014.

\bibitem{gonzaleztokman2018stability}
C.~Gonz\'{a}lez-Tokman and A.~Quas.
\newblock Stability and collapse of the {L}yapunov spectrum for {P}erron-{F}robenius operator cocycles.
\newblock {\em J. Eur. Math. Soc. (JEMS)}, 23(10):3419--3457, 2021.

\bibitem{addit}
Cecilia Gonz\'{a}lez-Tokman.
\newblock Multiplicative ergodic theorems for transfer operators: towards the identification and analysis of coherent structures in non-autonomous dynamical systems.
\newblock In {\em Contributions of {M}exican mathematicians abroad in pure and applied mathematics}, volume 709 of {\em Contemp. Math.}, pages 31--52. Amer. Math. Soc., [Providence], RI, [2018] \copyright 2018.

\bibitem{Huntctsnowherediff}
B.~R. Hunt.
\newblock The prevalence of continuous nowhere differentiable functions.
\newblock {\em Proc. Amer. Math. Soc.}, 122(3):711--717, 1994.

\bibitem{HuntPrevalence}
B.~R. Hunt, T.~Sauer, and J.~A. Yorke.
\newblock Prevalence: a translation-invariant ``almost every'' on infinite-dimensional spaces.
\newblock {\em Bull. Amer. Math. Soc. (N.S.)}, 27(2):217--238, 1992.

\bibitem{MR1191479}
Brian~R. Hunt, Tim Sauer, and James~A. Yorke.
\newblock Prevalence. {A}n addendum to: ``{P}revalence: a translation-invariant `almost every' on infinite-dimensional spaces''.
\newblock {\em Bull. Amer. Math. Soc. (N.S.)}, 28(2):306--307, 1993.

\bibitem{functional_prevalence}
V.~Y. Kaloshin.
\newblock Prevalence in spaces of finitely smooth mappings.
\newblock {\em Funktsional. Anal. i Prilozhen.}, 31(2):27--33, 95, 1997.

\bibitem{smooth_dynamical}
V.~Y. Kaloshin.
\newblock Some prevalent properties of smooth dynamical systems.
\newblock {\em Tr. Mat. Inst. Steklova}, 213(Differ. Uravn. s Veshchestv. i Kompleks. Vrem.):123--151, 1997.

\bibitem{Keller}
G.~Keller and C.~Liverani.
\newblock Stability of the spectrum for transfer operators.
\newblock {\em Ann. Scuola Norm. Sup. Pisa Cl. Sci. (4)}, 28(1):141--152, 1999.

\bibitem{Kifer}
Y.~Kifer.
\newblock {\em Ergodic theory of random transformations}, volume~10 of {\em Progress in Probability and Statistics}.
\newblock Birkh\"{a}user Boston, Inc., Boston, MA, 1986.

\bibitem{Kingman}
J.~F.~C. Kingman.
\newblock Subadditive ergodic theory.
\newblock {\em Ann. Probability}, 1:883--909, 1973.

\bibitem{Mane}
R.~Ma\~{n}\'{e}.
\newblock Oseledec's theorem from the generic viewpoint.
\newblock In {\em Proceedings of the {I}nternational {C}ongress of {M}athematicians, {V}ol. 1, 2 ({W}arsaw, 1983)}, pages 1269--1276. PWN, Warsaw, 1984.

\bibitem{martin1983expand}
N.~F.~G. Martin.
\newblock On finite {B}laschke products whose restrictions to the unit circle are exact endomorphisms.
\newblock {\em Bull. London Math. Soc.}, 15(4):343--348, 1983.

\bibitem{Prevalence}
William Ott and James~A. Yorke.
\newblock Prevalence.
\newblock {\em Bull. Amer. Math. Soc. (N.S.)}, 42(3):263--290, 2005.

\bibitem{geometry_prevalence}
T.~Sauer, J.~A. Yorke, and M.~Casdagli.
\newblock Embedology.
\newblock {\em J. Statist. Phys.}, 65(3-4):579--616, 1991.

\bibitem{HHJ_Reglin}
J.~Sedro and H.~H. Rugh.
\newblock Regularity of characteristic exponents and linear response for transfer operator cocycles.
\newblock {\em Comm. Math. Phys.}, 383(2):1243--1289, 2021.

\bibitem{zero_measure_c1curve}
T.~Tao.
\newblock {\em An introduction to measure theory}, volume 126 of {\em Graduate Studies in Mathematics}.
\newblock American Mathematical Society, Providence, RI, 2011.

\bibitem{IOC_TP}
P.~Thieullen.
\newblock Fibr\'{e}s dynamiques asymptotiquement compacts. {E}xposants de {L}yapounov. {E}ntropie. {D}imension.
\newblock {\em Ann. Inst. H. Poincar\'{e} Anal. Non Lin\'{e}aire}, 4(1):49--97, 1987.

\bibitem{Tischlerexpand}
D.~Tischler.
\newblock Blaschke products and expanding maps of the circle.
\newblock {\em Proc. Amer. Math. Soc.}, 128(2):621--622, 2000.

\end{thebibliography}

\end{document}